\newcommand{\rr}{\mathbb{R}}
\newcommand{\lan}{\langle}
\newcommand{\ran}{\rangle}
\newcommand{\be}{\begin{eqnarray*}}
\newcommand{\bel}{\begin{eqnarray}}
\newcommand{\ee}{\end{eqnarray*}}
\newcommand{\eel}{\end{eqnarray}}
\newcommand{\ba}{\begin{aligned}}
\newcommand{\ea}{\end{aligned}}
\newcommand{\de}{\Delta}
\newcommand{\al}{\alpha}
\newcommand{\ep}{\epsilon}
\newcommand{\eps}{\epsilon}
\newcommand{\wt}{\widetilde}
\newcommand{\cc}{-\int_{-1}^1 e^{-\al w}H_-(W) dw}
\newcommand{\bb}{-\int_{-1}^1 e^{\al w} H_+(W) dw}
\newcommand{\ddd}{\int_{-1}^1 e^{\al w}H_-(W) dw}
\newcommand{\aaa}{\int_{-1}^1 e^{-\al w} H_+(W) dw}
\newcommand{\Real}{\mathbb R}
\newcommand{\Complex}{\mathbb C}
\renewcommand{\ep}{\epsilon}
\newcommand{\grad}{\nabla}
\newcommand{\norm}[1]{\left|\left| #1 \right|\right|}
\newcommand{\abs}[1]{\left| #1 \right|}
\newcommand{\set}[1]{\left\{ #1 \right\}}
\newcommand{\brak}[1]{\left\langle #1 \right\rangle} 
\newcommand{\ph}{\mathrm{ph}}
\newtheorem{theorem}{Theorem}[section]
\newtheorem{corollary}[theorem]{Corollary}
\newtheorem{lemma}[theorem]{Lemma}
\newtheorem*{lemma*}{Lemma}
\theoremstyle{definition}
\newtheorem{remark}[theorem]{Remark}
\numberwithin{equation}{section}
\begin{document}

\title{Inviscid damping and enhanced dissipation of the boundary layer for 2D Navier-Stokes linearized around Couette flow in a channel}
\author{Jacob Bedrossian\thanks{\footnotesize Department of Mathematics, University of Maryland, College Park, MD 20742, USA \href{mailto:jacob@math.umd.edu}{\texttt{jacob@math.umd.edu}}. J.B. was supported by NSF CAREER grant DMS-1552826 and NSF RNMS \#1107444 (Ki-Net)} \and Siming He\thanks{\footnotesize Department of Mathematics, Duke University, NC 27708, USA \href{mailto:simhe@math.duke.edu}{\texttt{simhe@math.duke.edu}}.}}

\maketitle

\begin{abstract} 
We study the 2D Navier-Stokes equations linearized around the Couette flow $(y,0)^t$ in the periodic channel $\mathbb T \times [-1,1]$ with no-slip boundary conditions in the vanishing viscosity $\nu \to 0$ limit. 
We split the vorticity evolution into the free evolution (without a boundary) and a boundary corrector that is exponentially localized to at most an $O(\nu^{1/3})$ boundary layer. 
If the initial vorticity perturbation is supported away from the boundary, we show inviscid damping of \emph{both} the velocity \emph{and} the vorticity associated to the boundary layer. 
For example, our $L^2_t L^1_y$ estimate of the boundary layer vorticity is \emph{independent of $\nu$}, provided the initial data is $H^1$.
For $L^2$ data, the loss is only logarithmic in $\nu$. 
Note both such estimates are false for the vorticity in the interior.
To the authors' knowledge, this inviscid decay of the boundary layer vorticity seems to be a new observation not previously isolated in the literature.
Both velocity and vorticity satisfy the expected $O(\exp(-\delta\nu^{1/3}\alpha^{2/3}t))$ enhanced dissipation in addition to the inviscid damping. 
Similar, but slightly weaker, results are obtained also for $H^1$ data that is against the boundary initially.
For $L^2$ data against the boundary, we at least obtain the boundary layer localization and enhanced dissipation. 

\end{abstract}

\setcounter{tocdepth}{1}
{\small\tableofcontents}

\section{Introduction}\label{sec:Intro}
We study the 2D Navier-Stokes equations linearized around the Couette flow in the periodic channel $(x,y) \in \mathbb T \times [-1,1]$.
In vorticity form, the equations are
\begin{equation} \label{eq:2DNSE}
\left\{
\begin{array}{l}
\partial_t \omega + y \partial_x \omega  = \nu \Delta \omega, \quad\quad (x,y) \in \mathbb T \times [-1,1]\\
u = \grad^{\perp}(-\Delta)^{-1} \omega := \begin{pmatrix} -\partial_y \\ \partial_x \end{pmatrix} (-\Delta)^{-1} \omega, \\  
u|_{y = \pm 1}  = 0, \\
\omega(0,x,y) = \omega_{in}(x,y).   
\end{array}
\right.
\end{equation}
Fourier transforming $x \mapsto \alpha$, denoting $\widehat{\omega} = \widehat{\omega}(t,\alpha,y)$, gives the following equation for each $\alpha \in \mathbb Z$ (see Section \ref{sec:Note} for Fourier conventions), 
\begin{equation}
\left\{
\begin{array}{l}
\partial_t \widehat{\omega} + i\alpha y \widehat{\omega}  = \nu \left(\partial_{yy} - \alpha^2\right)\widehat{\omega}, \quad\quad y \in [-1,1]\\
\widehat{u} = \begin{pmatrix} -\partial_y \\ i\alpha \end{pmatrix}(\alpha^2 - \partial_{yy})^{-1} \widehat{\omega}, \\  
\widehat{u}|_{y = \pm 1}  = 0, \\
\omega(0,x,y) = \omega_{in}(x,y). 
\end{array}
\right.
\end{equation}
By symmetry, it suffices to consider only the case $\alpha > 0$, which is the convention taken below. 
The case of $y \in \Real$ with no boundary condition on $u|_{y = \pm 1}$, gives the free equations
\begin{equation} \label{eq:free}
\left\{
\begin{array}{l}
\partial_t \widehat{\omega} + i\alpha y \widehat{\omega}  = \nu \left(\partial_{yy} - \alpha^2\right)\widehat{\omega}, \quad\quad y \in \Real \\
  \widehat{u} = \begin{pmatrix} -\partial_y \\ i\alpha \end{pmatrix}(\alpha^2 - \partial_{yy})^{-1} \widehat{\omega} ,\\
  \omega(0,x,y) = \omega_{in}(x,y).   
\end{array}
\right.
\end{equation}
These were solved by Kelvin in 1887 \cite{Kelvin87}; see also \cite{LinZeng11,BMV14} and Section \ref{sec:Free}. 
Two effects are apparent: (A) the enhanced dissipation effect that provides a much accelerated decay time-scale ($O(\nu^{-1/3})$ as opposed to $O(\nu^{-1})$), quantified by the following, 
\begin{align}
\norm{\widehat{\omega}(\alpha,\cdot)}_{L^2_y} \leq e^{-\alpha^2 \nu t - \frac{1}{12}\nu \alpha^{2} t^3} \norm{\widehat{\omega}_{in}(\alpha,\cdot)}_{L^2_y}
\end{align}
and (B) the inviscid damping of the velocity that provides some asymptotic stability independent of $\nu$ (noted first by Orr \cite{Orr07}; see also \cite{LinZeng11})
\begin{subequations} \label{ineq:IDog}
\begin{align}
\abs{\alpha}^{1/2}\norm{\widehat{u}(\alpha,\cdot)}_{L^2_t L^2_y}  & \lesssim \norm{\widehat{\omega}_{in}(\alpha,\cdot)}_{L^2_y}, \\
 \brak{\alpha t} \norm{\widehat{u^1}(t,\alpha,\cdot)}_{L^2_y} + \brak{\alpha t}^2 \norm{\widehat{u^2}(t,\alpha,\cdot)}_{L^2_y} &  \lesssim \norm{\widehat{\omega}_{in}(\alpha,\cdot)}_{H^2_y}. 
\end{align}
\end{subequations}
See Section \ref{sec:Note} for notation conventions regarding $f \lesssim g$ (also, we use $\brak{x} = (1 + \abs{x}^2)^{1/2}$). 
Both effects are caused by the shear sending enstrophy to high frequencies linearly in time and are both associated with the continuous spectrum of linearized Euler. 
The enhanced dissipation is caused by the increased effectiveness of the viscous dissipation at high frequencies; see e.g. \cite{RhinesYoung83,BajerEtAl01,TDG04}. It has recently generated a lot of interest in the mathematics community, both in relation to hydrodynamic stability (see e.g. \cite{BMV14,BGM15I,BGM15II,WZ18,LWZ17,IMM17,WZZ_2DK,Gallay18,Wei18} and the references therein) and in relation to the stirring of passive scalars, where it has also been called relaxation enhancement (\cite{CKRZ08,CZT18,FG19} and the references therein).
The inviscid damping is due to the negative order operator recovering the velocity from the vorticity. This effect is somewhat analogous to Landau damping in plasma physics (see the discussions in \cite{LZ11b,MouhotVillani11,BM15} and the references therein). 
In the absence of boundaries, the nonlinear dynamics near the  Couette flow in 2D and 3D at high Reynolds number is reasonably well-understood but still some questions remain. See \cite{LinZeng11,BM15,BMV14,BGM15I,BGM15II,BVW16,DM18,WZ18} and the review article \cite{BGM_Bull}. 

The presence of boundaries changes the picture significantly, in particular, the mismatch of the viscous and inviscid boundary conditions normally generates vorticity in a boundary layer that does not vanish in the inviscid limit. See \cite{Batchelor,Maekawa2014,MT10,GGN16I,GGN16II,GN17,GVMM18,KVW19,MaekawaMazzucato18} and the references therein.  
Often, the presence of such boundary layers leads to long-wave linear instabilities in channel shear flows even for flows that are stable at $\nu = 0$; see e.g. \cite{GGN16I,GGN16II} in mathematics and \cite{DR81,Yaglom12} in physics. 
This is not true for the Couette flow however, as was proved by Romanov in 1973 \cite{Romanov73}. See \cite{Wasow53} for earlier work and \cite{MarcusPress77} for a more detailed Green's function expansion of the solution. 
Recently, the work \cite{ChenEtAl18} proved enhanced dissipation and inviscid damping results for the linearized 2D Navier-Stokes in a periodic channel (in $x$).
We work also in this setting; the problem of understanding the fine dynamics of long-waves in an infinite channel remains open.

Finally, we remark that in the case $\nu = 0$, the study of Couette flow in a channel is far more advanced. Classical works on the linearized problem are given in \cite{Dikii1960,Case1960}, for a modern treatment of optimal inviscid damping of linearized shear flows close to Couette see \cite{Z17}.
Nonlinear inviscid damping was recently proved in \cite{IonescuJia18} for vorticity initially separated from the boundary (a requirement which is in some sense necessary \cite{Z16}). 
The harder study of non-Couette shears is also beginning to be understood for $\nu =0$, at least in the linearized case; see e.g. \cite{Z16,Z17,WZZ18,WZZ_2DK,WZZ19,J19}.

\subsection{Main results}
\subsubsection{Separated from the boundary}
The first set of results detail the case when the initial perturbation vorticity is compactly supported away from the boundary.
Note that this is not quite the same as the case studied by Maekawa in \cite{Maekawa2014}, since here the background vorticity is still all the way against the boundary.
Intuition suggests results should be stronger if the perturbation is away from the boundary, and our results confirm this by providing the strongest results thus far. 
The proof takes advantage of (A) a variation of a clever idea from \cite{ChenEtAl18} (see Section \ref{sec:OrrSomm} below) for using the free evolution \eqref{eq:free} to estimate resolvents; (B) optimal understanding of the inviscid damping and enhanced dissipation of \eqref{eq:free}; and (C) methods for obtaining precise estimates on Green's functions via Airy function asymptotics inspired by arguments in e.g. \cite{GGN16I,GGN16II,GN17,GN17_Away}. 
See Remark \ref{rmk:ChenEtAl} for more discussions on the refinements to the results of \cite{ChenEtAl18} that we provide in this case. 

Note that when formulating the equations on the vorticity side, the no-slip boundary conditions imply (see also \cite{Romanov73,ChenEtAl18})
\begin{align}
\int_{-1}^1 e^{\pm \al y}\widehat{\omega}(t,\alpha,y)dy = 0. \label{eq:bdyc}  
\end{align}
Throughout the paper, we define 
\begin{align*}
\ep:=\al^{-1/3}\nu^{1/3}.
\end{align*}
Define the following weight for $p \in [1,\infty]$
\begin{align} 
{\eta_{\ep,p}}=\left\{\begin{array}{rrrr}
\ep^{\frac{3}{2}-\frac{3}{2p}},\quad& d(y,\pm 1)\leq \ep^{\frac{3}{2}};\\
\max\left\{\frac{d(x,\pm1)^2}{\ep^{3/2+1/p}},d(y, \pm1)^{1-\frac{1}{p}}\right\}, \quad & \ep^{\frac{3}{2}}\leq d(y,\pm 1) \leq \ep;\\
\ep^{\frac{1}{2}-\frac{1}{p}}
\exp\left(c_\star\frac{d(x,\pm 1)^{{3}/{2}}}{\ep^{{3}/{2}}} \right),\quad & d(y,\pm 1)\geq \ep. \\
\end{array}\right. \label{def:eta1}
\end{align}
Here $c_\star$ is a (small) universal constant independent of $\al, \nu$. 
Define the decay rate (for some small universal constant $\delta$ determined by the proof and any small $\kappa > 0$), 
\begin{align}
\lambda = (1-\kappa)\alpha^2 \nu + \delta \alpha^{2/3}\nu^{1/3}. \label{def:lambda}
\end{align}
\begin{theorem}[Initial separation from the boundary] \label{thm:Sepbd}
Suppose that $\textup{supp} \, \omega_{in} \subset [-1+\delta_0,1-\delta_0]$ for some $\delta_0 > 0$ and satisfies the no-slip condition $\int_{-1}^1 \widehat{\omega}_{in}(\alpha,y) e^{\pm \alpha y} dy = 0$. Then if we write the decomposition
\begin{align}
\widehat{\omega}(t,\alpha,y) = \widehat{\omega}_f(t,\alpha,y) + \widehat{\omega}_b(t,\alpha,y),  
\end{align}
where $\omega_f$ is the solution to \eqref{eq:free}  on $(x,y) \in \mathbb T \times \mathbb R$ and $\omega_b$ is the boundary corrector, $\omega_b$ satisfies the following estimates  $\forall \delta'$ sufficiently small (implicit constants depending on $\delta_0$, $\kappa$, $p$ but \emph{never} on $t$, $\alpha$, or $\nu$), 
\begin{itemize}
\item[(i)] uniform in time weighted estimates, analyticity in $x$, and enhanced dissipation for $p \in [1,\infty)$
\begin{align}
||\eta_{\ep,p}\widehat{\omega}_b(t,\alpha,\cdot)||_{L^p_y}\lesssim  e^{-\delta'\alpha} e^{-\lambda t} \norm{\widehat{\omega}_{in}(\alpha,\cdot)}_{L^2_y}; \label{ineq:SemiGrp}
\end{align}
\item[(ii)] inviscid damping of the vorticity: for $p \in [1,2]$, there holds 
\begin{subequations} \label{ineq:IDv}
\begin{align} 
|| e^{\lambda t}\eta_{\ep,p}\widehat{\omega}_b(t,\alpha,\cdot)||_{L^2_t L^p_y} &\lesssim  e^{-\delta'\alpha} \norm{\partial_y \widehat{\omega}_{in}(\alpha,\cdot)}_{L^2_y}, \label{ineq:IDv1} \\ 
|| e^{\lambda t}\eta_{\ep,p}\widehat{\omega}_b(t,\alpha,\cdot)||_{L^2_t L^p_y} &\lesssim \brak{\ln \nu} e^{-\delta'\alpha} \norm{\widehat{\omega}_{in}(\alpha,\cdot)}_{L^2_y} \label{ineq:IDv2}
\end{align}
\end{subequations}
and for $p \in [2,\infty)$, there holds 
\begin{align}
|| e^{\lambda t}\eta_{\ep,p}\widehat{\omega}_b(t,\alpha,\cdot)||_{L^p_y L^2_t} &\lesssim  e^{-\delta'\alpha} \norm{\partial_y \widehat{\omega}_{in}(\alpha,\cdot)}_{L^2}; \label{ineq:IDv3}
\end{align}
\item[(iii)] inviscid damping and enhanced dissipation of the velocity
\begin{align}
\norm{e^{\lambda t}\widehat{u}_b(t,\alpha,\cdot)}_{L^2_t L^2_y} \lesssim  e^{-\delta'\alpha} \norm{\widehat{\omega}_{in}(\alpha,\cdot)}_{L^2}. \label{ineq:IDu}
\end{align}
\end{itemize}
\end{theorem}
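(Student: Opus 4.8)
The plan is to make the splitting precise, reduce $\omega_b$ to a resolvent problem via a Laplace transform in time, solve the resulting Orr--Sommerfeld equation with Airy asymptotics, and then invert the transform along a contour shifted by the enhanced dissipation rate $\lambda$, using Plancherel for the time-integrated estimates.

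First I would set up the decomposition: extend $\omega_{in}$ by zero, let $\omega_f$ solve \eqref{eq:free} (for which the optimal inviscid damping and enhanced dissipation of Section \ref{sec:Free} are available), and put $\omega_b := \omega - \omega_f$. Then $\widehat\omega_b$ solves $\partial_t\widehat\omega_b + i\alpha y\widehat\omega_b = \nu(\partial_{yy}-\alpha^2)\widehat\omega_b$ on $[-1,1]$ with $\widehat\omega_b(0) = 0$, while its stream function, $(\alpha^2-\partial_{yy})\widehat\psi_b = \widehat\omega_b$, must satisfy $\widehat\psi_b(t,\alpha,\pm1) = -\widehat\psi_f(t,\alpha,\pm1)$ and $\partial_y\widehat\psi_b(t,\alpha,\pm1) = -\partial_y\widehat\psi_f(t,\alpha,\pm1)$ (equivalently \eqref{eq:bdyc}). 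The key point is that $\supp\widehat\omega_{in}\subset[-1+\delta_0,1-\delta_0]$ forces these data to be $O(e^{-\delta_0\alpha})$: from $\widehat\psi_f(1) = \tfrac1{2\alpha}\int e^{-\alpha(1-y')}\widehat\omega_f(y')\,dy'$ together with the fact that $\widehat\omega_f$ stays concentrated in $[-1+\tfrac{\delta_0}{2},1-\tfrac{\delta_0}{2}]$ up to errors controlled by enhanced dissipation (on the relevant timescales it spreads only by $O(\epsilon)\ll\delta_0$); and, by inviscid damping of the free flow, to decay in $t$ like $\brak{\alpha t}^{-1}$ with a bound involving $\partial_y\omega_{in}$ (one integration by parts in $y'$), plus an extra Gaussian-in-$t$ gain from $\nu$. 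This is the origin of the $e^{-\delta'\alpha}$ factors, and the mere $\brak{\alpha t}^{-1}$ decay available for $L^2$ data is what forces the $\brak{\ln\nu}$ loss in \eqref{ineq:IDv2}.

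Laplace transforming in $t$ (spectral parameter $\mu = -i\alpha c$), $\widetilde\omega_b$ solves $(\mu + i\alpha y)\widetilde\omega_b = \nu(\partial_{yy}-\alpha^2)\widetilde\omega_b$ on $(-1,1)$, equivalently $\widetilde\psi_b$ solves the homogeneous Orr--Sommerfeld equation with the four (Laplace transformed, hence $O(e^{-\delta_0\alpha})$) inhomogeneous boundary values. I would solve it with a basis of homogeneous solutions --- two ``slow'' modes $\approx e^{\pm\alpha y}$ from $(\partial_{yy}-\alpha^2)$, and two ``fast'' Airy-type modes concentrated in a critical layer of width $\epsilon = (\nu/\alpha)^{1/3}$ about $\mathrm{Re}\,c$ --- using uniform Airy asymptotics in the spirit of \cite{GGN16I,GGN16II,GN17,GN17_Away}. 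Along the inversion contour $\mathrm{Re}\,\mu = -\lambda$ (so $|\mathrm{Im}\,c| \sim \lambda/\alpha \sim \alpha\nu + \epsilon$, and there is no spectrum to its right, by Romanov \cite{Romanov73}) the fast modes decay like $\exp(-c_\star(d(y,\mathrm{Re}\,c)/\epsilon)^{3/2})$, which for $\mathrm{Re}\,c\approx\pm1$ reproduces the three-region weight $\eta_{\epsilon,p}$ --- the sub-layer $d\le\epsilon^{3/2}=(\nu/\alpha)^{1/2}$ (the diffusive scale at the inviscid-damping time $t\sim\alpha^{-1}$) and the transition zone $\epsilon^{3/2}\le d\le\epsilon$ arising from matching fast and slow modes under the no-slip constraint; contributions with $\mathrm{Re}\,c$ in the interior are absorbed by the slow modes and the $e^{-\delta_0\alpha}$ smallness. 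Here I would use the device of \cite{ChenEtAl18} (Section \ref{sec:OrrSomm}): instead of estimating the channel resolvent from scratch, write it as the free resolvent --- the Laplace transform of the explicitly understood free semigroup, which gives essentially sharp bounds --- plus the boundary correction built from the Airy modes, and control the $4\times4$ matching system by a quantitative, uniform-in-$(\alpha,\nu)$ lower bound on the Orr--Sommerfeld dispersion determinant down to $\mathrm{Re}\,\mu = -\lambda$, i.e.\ a quantitative, $\nu$-robust version of Romanov's no-unstable-eigenvalue theorem with the $\delta\alpha^{2/3}\nu^{1/3}$ gap built in.

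Finally, inverting along $\mathrm{Re}\,\mu = -\lambda$: for (i), bound $\|\eta_{\epsilon,p}\widehat\omega_b(t)\|_{L^p_y} \le e^{-\lambda t}\tfrac1{2\pi}\int_{\R}\|\eta_{\epsilon,p}\widetilde\omega_b(-\lambda+i\tau)\|_{L^p_y}\,d\tau$, finite by the $\tau$-decay from the previous step; for (ii)--(iii), use Plancherel in $t$ (and Minkowski's inequality to exchange $L^p_yL^2_t$ when $p\ge2$) to replace the time-integrated norms by $\|\eta_{\epsilon,p}\widetilde\omega_b(-\lambda+i\tau)\|_{L^2_\tau L^p_y}$, square-integrable thanks to the improved decay of the boundary data (the integration by parts costing $\partial_y\omega_{in}$), with only a borderline bound for $L^2$ data, whence \eqref{ineq:IDv2}; the velocity estimate \eqref{ineq:IDu} follows from the corresponding $L^2_\tau$ control of $\widetilde\psi_b$ via $\widehat u_b = \nabla^\perp(\alpha^2-\partial_{yy})^{-1}\widehat\omega_b$. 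I expect the main obstacle to be this resolvent analysis: constructing the Airy homogeneous solutions with pointwise precision fine enough to reproduce $\eta_{\epsilon,p}$ across all three regions (including the sharp constant $c_\star$ and the transition zone), and establishing the quantitative, $\nu$-robust lower bound on the dispersion determinant on the shifted line --- and then checking that the extra $\tau$-decay needed for the genuinely new time-integrated boundary-layer vorticity bounds actually survives the whole construction.
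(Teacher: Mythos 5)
Your proposal follows essentially the same route as the paper's proof: the free-plus-boundary-corrector splitting, the homogeneous Orr--Sommerfeld resolvent built from Airy modes with a quantitative Evans-function lower bound along the shifted contour $c_i=-\lambda/\alpha$, the Chen et al.\ device of estimating the boundary data through the free evolution (with support separation and enhanced dissipation supplying the $e^{-\delta'\alpha}$ factors and the $\partial_y\widehat{\omega}_{in}$ versus $\brak{\ln\nu}$ dichotomy), and Plancherel in time for the integrated estimates. The only differences are bookkeeping: the paper matches on the vorticity side through the two integral conditions \eqref{eq:bdyc} (a $2\times 2$ Evans system rather than your $4\times 4$ matching), and the removal of the logarithm for the velocity is made precise there by the $L^\infty_{c_r}L^1_y$ kernel bound \eqref{ineq:LinfL1K}, which is exactly what your plan to control $\widetilde{\psi}_b$ directly in $L^2_\tau$ amounts to.
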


\begin{remark}
Note that $\eta_{\eps,1} \geq 1$ and hence \eqref{ineq:IDv1} and \eqref{ineq:SemiGrp} imply the uniform-in-$\nu$ estimates
\begin{align}
|| e^{\lambda t} \widehat{\omega}_b(t,\alpha,\cdot)||_{L^\infty_t L^1_y} & \lesssim e^{-\delta'\alpha} \norm{\widehat{\omega}_{in}(\alpha,\cdot)}_{L^2_y} \\ 
|| e^{\lambda t}\widehat{\omega}_b(t,\alpha,\cdot)||_{L^2_t L^1_y} & \lesssim e^{-\delta'\alpha} \norm{\partial_y \widehat{\omega}_{in}(\alpha,\cdot)}_{L^2_y}. 
\end{align}
\end{remark}

\begin{remark}
Note that \eqref{ineq:IDv1} provides the improved $L^\infty_y$ inviscid damping estimate at the cost of more regularity 
\begin{align}
\norm{e^{\lambda t}\widehat{u}_b(t,\alpha,\cdot)}_{L^2_t L^\infty_y} \lesssim  e^{-\delta'\alpha} \norm{\partial_y \widehat{\omega}_{in}(\alpha,\cdot)}_{L^2_y}. \label{ineq:IDu2}
\end{align}
\end{remark}

\begin{remark} \label{rmk:L2L2vort}
Due to the special structure of $L^2$, we can relax the regularity used in \eqref{ineq:IDv1} for $p=2$ at the cost of a slightly different weight
\begin{align}
|| e^{\lambda t}\eta^\ast_{\ep,p}\widehat{\omega}_b(t,\alpha,\cdot)||_{L^2_t L^2_y} &\lesssim_p  e^{-\delta'\alpha} \norm{\widehat{\omega}_{in}(\alpha,\cdot)}_{L^2},  
\end{align}
where $\eta^\ast$ is given by
\begin{align} \label{def:eta1star}
{\eta^\ast_{\ep,p}}=\left\{\begin{array}{rrrr}
\ep^{\frac{3}{2}-\frac{3}{2p}},\quad& d(x,\pm 1)\leq \ep^{\frac{3}{2}};\\
d(x,\pm1)^{1-\frac{1}{p}}, \quad & \ep^{\frac{3}{2}}\leq d(x,\pm 1) \leq \ep;\\
\ep^{1-\frac{1}{p}}
\exp\left( c_\star\frac{d(x,\pm 1)^{{3}/{2}}}{\ep^{{3}/{2}}}\right),\quad & d(x,\pm 1)\geq \ep.
\end{array}\right.
\end{align}
Note that the change is only visible in the regions away from the core of the boundary layer. 
\end{remark}

\begin{remark} \label{rmk:ChenEtAl}
In \cite{ChenEtAl18}, the authors study the case of $H^1$ data against the boundary; see Section \ref{sec:Against} for the results we obtain in that case. 
Under the different (basically stronger) assumption of vorticity initially separated from the boundary, Theorem \ref{thm:Sepbd} provides a number of stronger estimates than \cite{ChenEtAl18}. 
Our decomposition into $\omega_f$ and $\omega_b$ is significantly more precise: $\omega_f$ is explicit and the weight in \eqref{def:eta1} provides
 precise localization of the boundary layer.
The inviscid decay of the \emph{vorticity}, provided especially by the $p=1$ case of \eqref{ineq:IDv} is new, which shows that the total vorticity generated at the boundary is bounded and decaying independent of $\nu$. In \cite{ChenEtAl18} an estimate which is suggestive of a slightly weaker version of \eqref{ineq:IDv1} at $p=2$ is given. 
Our results also provide a variety of more minor improvements, such as providing higher $L^p$ estimates and combined inviscid damping and enhanced dissipation estimates in \eqref{ineq:IDv}, \eqref{ineq:IDu}, and \eqref{ineq:IDu2}. This latter estimate, \eqref{ineq:IDu2}, is also an improvement over the $L^\infty_y$ estimate provided in \cite{ChenEtAl18} in terms of damping. 
\end{remark}

\begin{remark}In the $p=\infty$ case, we can also obtain the following estimate:
\begin{align*}
\norm{\frac{\eta^\ast_{\ep,\infty}}{\brak{\mathbf{1}_{d(\cdot,\pm 1) \leq \eps} \ln \frac{d(\cdot,\pm1)}{\ep}} }\widehat{\omega}_b(t,\alpha,\cdot) }_{L_y^\infty}\lesssim_{\delta',\kappa} e^{-\delta'\al-\lambda t}||\widehat{\omega}_{in}(\alpha,\cdot)||_{L_y^2}.
\end{align*}
The proof is analogous to the case $p \in [1,\infty)$, however, the extra logarithmic factor compensates for a divergence in the proof of Lemma \ref{lem:Kpm} below. 
\end{remark}

\begin{remark}
	Note that the weight \eqref{def:eta1} is exponentially large $d(y,\pm 1) \geq \eps$  and that the boundary corrector is analytic-in-$x$ depending only on $\delta$, not on $\nu$. This behavior is analogous to Maekawa's boundary layer decomposition \cite{Maekawa2014}. 
\end{remark}

Many of the works on the inviscid limit on Navier-Stokes have studied the $L^2$ velocity convergence of the viscous problem to the inviscid problem as well as weak convergence of the vorticity to the inviscid evolution with an additional vortex sheet created at the boundary; see the classical work of Kato \cite{Kato84} and the works of \cite{Constantinetall19,LopesFilhoetall08,MazzucatoNiuWang11,MazzucatoTaylor08} and the references therein. See also the review article \cite{MaekawaMazzucato18}. 
The following is a corollary of Theorem \ref{thm:Sepbd}. The $L^2$ convergence of the velocity follows by an easy adaptation of Kato's criterion \cite{Kato84} and the formation of the vortex sheet follows from standard arguments [see e.g. Theorem 3.8 \cite{MaekawaMazzucato18}].     

\begin{corollary}[Vortex sheet in the inviscid limit]
	Let $\omega_{in;\nu} \in L^2$ such that $\textup{supp} \, \omega_{in;\nu} \subset (-1+\delta_0,1-\delta_0)$ for some $\delta_0 > 0$. 
	Let $u_\nu$ and $\omega_\nu$ denote the velocity and vorticity of solutions to \eqref{eq:2DNSE} with $\nu \geq 0$ with no-slip conditions if $\nu > 0$ and no-penetration ($u^2|_{y \pm 1} = 0$) if $\nu = 0$ and that these conditions are satisfied also by the initial data. Suppose that $\lim_{\nu \to 0}\norm{\omega_{in;\nu} - \omega_{in;0}}_{L^2} = 0$. Then for each $T > 0$ fixed,  
	\begin{itemize}
		\item $\lim_{\nu \to 0} \sup_{t \in [0,T]} \norm{u_\nu - u_0}_{L^2} = 0$; 
		\item $\omega^\nu \rightharpoonup \omega^0 - u_0^1(t,x,1) \delta_{y=1} +  u_0^1(t,x,-1) \delta_{y = -1}$,
where the convergence is in $L^\infty(0,T;(H^1)^\ast)$. Here $\delta_{y = \pm 1}$ denotes the Hausdorff measure of the lines $\set{(x,y) \in \mathbb T \times \mathbb R : y = \pm 1}$. 
	\end{itemize}
\end{corollary}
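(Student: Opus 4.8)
The plan is to deduce both conclusions from Theorem~\ref{thm:Sepbd}, the vortex-sheet statement reducing \emph{directly} to the $L^2$ velocity convergence. Write $\Omega=\T\times[-1,1]$ and $w_\nu=u_\nu-u_0$. The key structural fact is that the inviscid equation gives $\omega^0(t,x,y)=\omega_{in;0}(x-ty,y)$, so $\supp\omega^0(t,\cdot)\subset\{|y|\le 1-\delta_0\}$ for \emph{every} $t$ (the pure shear $y\pa_x$ does not move the $y$-support); consequently $u_0$ is harmonic, hence $C^\infty$ with all traces and their $x,t$-derivatives bounded by $\norm{\omega_{in;0}}_{L^2}$, on the collar $\{1-\delta_0<|y|<1\}$. (Mode by mode: near a wall $\widehat\psi_0(t,\alpha,\cdot)$ is a multiple of $\sinh$ of $\alpha$ times the distance to that wall, the multiplier being a bounded functional of $\widehat\omega^0(t,\alpha,\cdot)$ of norm $\lesssim e^{-\alpha\delta_0}$, which also yields $\norm{u_0^1(t,\cdot,\pm1)}_{L^2_x}\lesssim\norm{\omega_{in;0}}_{L^2}$ uniformly in $t$.)

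\emph{The reduction.} For $\phi\in H^1(\Omega)$, integrate by parts using the paper's orientation $\omega=\pa_y u^1-\pa_x u^2$ (forced by $u=\grad^\perp(-\Delta)^{-1}\omega$, $\grad^\perp=(-\pa_y,\pa_x)^t$). Since $u_\nu$ is no-slip, $\langle\omega^\nu(t),\phi\rangle=\int_\Omega u_\nu(t)\cdot\grad^\perp\phi$ with \emph{no} boundary contribution, whereas for the merely no-penetration field $u_0$ the same computation leaves the wall terms, $\int_\Omega u_0(t)\cdot\grad^\perp\phi=\langle\omega^0(t),\phi\rangle-\int_\T u_0^1(t,x,1)\phi(x,1)\,dx+\int_\T u_0^1(t,x,-1)\phi(x,-1)\,dx$. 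Subtracting,
\[
\left\langle\,\omega^\nu(t)-\bigl(\omega^0(t)-u_0^1(t,x,1)\delta_{y=1}+u_0^1(t,x,-1)\delta_{y=-1}\bigr),\ \phi\,\right\rangle=\int_\Omega w_\nu(t)\cdot\grad^\perp\phi,
\]
so $\norm{\omega^\nu(t)-(\cdots)}_{(H^1)^\ast}\le\norm{w_\nu(t)}_{L^2}$; together with $\norm{u_0^1(t,\cdot,\pm1)}_{L^2_x}\lesssim\norm{\omega_{in;0}}_{L^2}$ (so the limit distribution lies in $(H^1)^\ast$ uniformly in $t$), both conclusions follow once $\norm{w_\nu}_{L^\infty(0,T;L^2)}\to0$.

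\emph{The velocity convergence.} At $t=0$, since \eqref{eq:bdyc} is a closed linear constraint it passes to $\omega_{in;0}$, and a vorticity obeying \eqref{eq:bdyc} has no-penetration Biot--Savart field automatically no-slip, so $u_{\nu;in},u_{0;in}$ are both the bounded, $\nu$-independent no-slip Biot--Savart of their vorticities and $\norm{w_\nu(0)}_{L^2}\lesssim\norm{\omega_{in;\nu}-\omega_{in;0}}_{L^2}\to0$. For $t>0$, $w_\nu$ solves $\pa_t w_\nu+y\pa_x w_\nu+(w_\nu^2,0)^t=-\grad q_\nu+\nu\Delta u_\nu$ with $w_\nu|_{\pa\Omega}=-(u_0^1,0)^t$, and the $L^2$ energy identity
\[
\tfrac12\tfrac{d}{dt}\norm{w_\nu}_{L^2}^2=-\int_\Omega w_\nu^1 w_\nu^2-\nu\int_{\pa\Omega}\pa_n u_\nu^1\,u_0^1\,dS-\nu\norm{\grad u_\nu}_{L^2}^2+\nu\int_\Omega\grad u_\nu:\grad u_0
\]
controls all terms except the anomalous wall integral. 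That term I would handle as in Kato's argument: test $\nu\Delta u_\nu=\pa_t u_\nu+y\pa_x u_\nu+(u_\nu^2,0)^t+\grad p_\nu$ against a divergence-free $\Phi_\nu$ supported in a collar of width $c\nu^{1/3}$ with $\Phi_\nu|_{\pa\Omega}=(u_0^1,0)^t$ and $\norm{\Phi_\nu}_{L^2},\norm{\pa_t\Phi_\nu}_{L^2},\norm{\pa_x\Phi_\nu}_{L^2}\lesssim\nu^{1/6}$, $\norm{\grad\Phi_\nu}_{L^2}\lesssim\nu^{-1/6}$ (constants from the collar smoothness), then integrate in time: every resulting term is $o_\nu(1)$, the borderline $\nu\int\grad u_\nu:\grad\Phi_\nu$ being $\lesssim\nu^{1/3}$ by Cauchy--Schwarz against the a priori bound $\nu\int_0^T\norm{\grad u_\nu}_{L^2}^2\,dt\lesssim\norm{u_{\nu;in}}_{L^2}^2$. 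Here the collar width $\nu^{1/3}$ in place of Kato's $\nu$ buys the slack, which is legitimate precisely because the linearized equation carries no $u\cdot\grad u$ term; Grönwall then gives $\sup_{[0,T]}\norm{w_\nu}_{L^2}\to0$. (Equivalently one may verify the classical Kato criterion $\nu\int_0^T\norm{\grad u_\nu}_{L^2(\Gamma_{c\nu})}^2\,dt\to0$ from Theorem~\ref{thm:Sepbd}: in $u_\nu=u_f+u_b$ the free part contributes $\lesssim\nu T\norm{\omega_{in}}_{L^2}^2$ since $\norm{\grad u_f(t)}_{L^2(\Omega)}\le\norm{\omega_f(t)}_{L^2}\le\norm{\omega_{in}}_{L^2}$ with no boundary layer, while the corrector part is $\lesssim\nu^{1/2}\brak{\ln\nu}^2\norm{\omega_{in}}_{L^2}^2$ by the boundary-layer localization and enhanced dissipation in \eqref{ineq:SemiGrp}, \eqref{ineq:IDv}, \eqref{ineq:IDu}.)

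\emph{The main difficulty.} This is a corollary, so nothing here is deep; the one genuinely load-bearing point is that the anomalous wall term $\nu\int_{\pa\Omega}\pa_n u_\nu^1\,u_0^1\,dS$ cannot be estimated by controlling the boundary trace $\omega^\nu|_{\pa\Omega}$, because the pointwise bounds on $\widehat\omega_b$ degenerate logarithmically \emph{exactly} at the wall — so Kato's test-function integration by parts (or the verification of Kato's criterion) is unavoidable. The remaining items are bookkeeping: (a) that $\supp\omega^0(t,\cdot)$ stays off the boundary under the shear, so $u_0$ is smooth there and its wall trace and time derivative are controlled; (b) matching the initial data through the compatibility condition \eqref{eq:bdyc}; and (c) the orientation of $\grad^\perp$ in this paper, which is what turns the naive wall strengths $\pm u_0^1$ into the stated $\mp u_0^1(t,x,\pm1)$.
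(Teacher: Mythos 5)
Your proposal is correct and follows essentially the same route the paper indicates for this corollary: the $L^2$ velocity convergence by an adaptation of Kato's corrector/criterion argument (simplified here because the equation is linear, so a collar of width $\nu^{1/3}$ suffices), and the vortex-sheet statement by the standard integration-by-parts duality argument (as in Theorem 3.8 of \cite{MaekawaMazzucato18}), with the sign bookkeeping matching the paper's convention $u=\grad^\perp(-\Delta)^{-1}\omega$. The only cosmetic difference is that your primary Kato argument is self-contained (using only the energy bounds and the fact that $\omega^0$ stays supported away from the walls), with the verification of Kato's criterion via Theorem \ref{thm:Sepbd} relegated to a parenthetical, which is consistent with the paper's sketch.
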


\subsubsection{Against the boundary: $H^1$ case} \label{sec:Against}
Under the assumption of $H^1$ regularity, a variation of the proof of Theorem \ref{thm:Sepbd} also yields analogous results without assuming separation from the boundary. Theorem \ref{thm:RegData} is similar to results in \cite{ChenEtAl18}, however, our decomposition yields more precise boundary layer localization, slightly stronger decay estimates, and the ``nearly'' inviscid damping of the vorticity captured especially by \eqref{ineq:IDv2Reg} in the case $p=1$. Finally, the proof is also a little simpler and more direct, combining ideas from both \cite{ChenEtAl18} and e.g. \cite{GGN16I,GGN16II,GN17,GN17_Away}. 

\begin{theorem}[$H^1$ data touching the boundary] \label{thm:RegData}
Suppose $\omega_{in} \in H^1$ and satisfies the no-slip condition \eqref{eq:bdyc} at $t = 0$. Then if we write the decomposition
	\begin{align}
	\widehat{\omega}(t,\alpha,y) = \widehat{\omega}_f(t,\alpha,y) + \widehat{\omega}_b(t,\alpha,y),  
	\end{align}
	where $\omega_f$ is the solution to \eqref{eq:free} and $\omega_b$ is the boundary corrector, $\omega_b$ satisfies the following (with constants depending on $p,\kappa$), 
	\begin{itemize}
		\item[(i)] uniform in time weighted estimates and enhanced dissipation for $p \in [1,\infty)$
		\begin{align}
||\eta_{\ep,p}\widehat{\omega}_b(t,\alpha,\cdot)||_{L^p_y}\lesssim  e^{-\lambda t}\left(\abs{\alpha}^{1/2}\norm{\widehat{\omega}_{in}(\alpha,\cdot)}_{L^2_y} + \frac{1}{\abs{\alpha}^{1/2}} \norm{\partial_y\widehat{\omega}_{in}(\alpha,\cdot)}_{L^2_y}\right); \label{ineq:SemiGrpReg}
		\end{align}
		\item[(ii)] inviscid damping of the vorticity: for $p \in [1,2]$, there holds ($\eta_{\ep,p}^\ast$ as in Remark \ref{rmk:L2L2vort})
		\begin{subequations} \label{ineq:IDvReg}
			\begin{align}  
|| e^{\lambda t}\eta_{\ep,p}\widehat{\omega}_b(t,\alpha,\cdot)||_{L^2_t L^p_y} &\lesssim \brak{\ln \frac{\nu}{\alpha}} \left(\abs{\alpha}^{1/2} \norm{\widehat{\omega}_{in}(\alpha,\cdot)}_{L^2} + \frac{1}{\abs{\alpha}^{1/2}} \norm{\partial_y\widehat{\omega}_{in}(\alpha,\cdot)}_{L^2}\right) ,\label{ineq:IDv2Reg} \\ 
			|| e^{\lambda t}\eta^\ast_{\ep,p}\widehat{\omega}_b(t,\alpha,\cdot)||_{L^2_t L^2_y} &\lesssim  \left(\abs{\alpha}^{1/2}\norm{\widehat{\omega}_{in}(\alpha,\cdot)}_{L^2} + \frac{1}{\abs{\alpha}^{1/2}}\norm{\partial_y\widehat{\omega}_{in}(\alpha,\cdot)}_{L^2}\right);
			\end{align}
		\end{subequations}
		\item[(iii)] inviscid damping and enhanced dissipation of the velocity
		\begin{align}
		\norm{e^{\lambda t}\widehat{u}_b(t,\alpha,\cdot)}_{L^2_t L^2_y} \lesssim  \left(\abs{\alpha}^{1/2}\norm{\widehat{\omega}_{in}(\alpha,\cdot)}_{L^2_y} + \frac{1}{\abs{\alpha}^{1/2}}\norm{\partial_y\widehat{\omega}_{in}(\alpha,\cdot)}_{L^2_y}\right). \label{ineq:IDuReg}
		\end{align}
	\end{itemize}
\end{theorem}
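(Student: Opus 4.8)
The plan is to adapt the architecture of the proof of Theorem \ref{thm:Sepbd} to $H^1$ data that need not be separated from the boundary, replacing the role of the smallness $e^{-\delta'\alpha}$ coming from separation (which was ultimately derived from analyticity/exponential decay of the explicit free solution away from its support) with the quantitative bound $|\alpha|^{1/2}\|\widehat\omega_{in}\|_{L^2} + |\alpha|^{-1/2}\|\partial_y\widehat\omega_{in}\|_{L^2}$ on the relevant boundary traces. The first step is to recall the decomposition $\widehat\omega = \widehat\omega_f + \widehat\omega_b$, where $\widehat\omega_f$ solves the free problem \eqref{eq:free} on $y\in\Real$ (extending $\omega_{in}$ by zero outside $[-1,1]$, or using that $\omega_{in}\in H^1$ so the trace at $y=\pm1$ is controlled) and $\widehat\omega_b$ is defined to correct the boundary conditions \eqref{eq:bdyc}. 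Writing $\widehat\omega_b$ via the resolvent/Green's function as in Section \ref{sec:OrrSomm}, the source for $\widehat\omega_b$ is $-\widehat\omega_f|_{y=\pm1}$ (and the no-slip conditions \eqref{eq:bdyc} evaluated on $\widehat\omega_f$). The key input here is to bound these traces: by the $L^2$-inviscid-damping estimate \eqref{ineq:IDog}(a) for the free flow one controls $|\alpha|^{1/2}\|\widehat u_f\|_{L^2_tL^2_y}\lesssim\|\widehat\omega_{in}\|_{L^2}$, and for the vorticity trace at $y=\pm1$ one uses that the free evolution is just transport plus heat, so $\widehat\omega_f(t,\alpha,\pm1) = e^{-\nu(\cdots)}(\text{mollified }\widehat\omega_{in})(\pm1 \mp \text{shift})$; an $H^1\hookrightarrow C^0$-type trace estimate in $y$, combined with the $t^3$ enhanced-dissipation decay, gives an $L^2_t$ bound on this trace of the asserted form. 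This is where the two terms $|\alpha|^{1/2}\|\widehat\omega_{in}\|_{L^2}$ and $|\alpha|^{-1/2}\|\partial_y\widehat\omega_{in}\|_{L^2}$ naturally appear (one from the $L^2$ size, one from the derivative via interpolation, balanced at scale $|\alpha|^{-1}$).

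Next, with the boundary source controlled, I would insert it into the resolvent bounds for the Orr–Sommerfeld-type operator, exactly reusing the Airy-function/Green's-function machinery (the analogues of Lemmas such as Lemma \ref{lem:Kpm}) that is developed for Theorem \ref{thm:Sepbd}. The weighted spatial estimates — the weight $\eta_{\eps,p}$ for \eqref{ineq:SemiGrpReg}, and $\eta^\ast_{\eps,p}$ for the $L^2_tL^2_y$ estimate in \eqref{ineq:IDvReg} as flagged in Remark \ref{rmk:L2L2vort} — are literally the same weights, so the spatial localization of $\widehat\omega_b$ to an $O(\eps)$ layer is inherited verbatim; only the prefactor on the right-hand side changes. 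The enhanced-dissipation factor $e^{-\lambda t}$ with $\lambda$ as in \eqref{def:lambda} comes from the same resolvent-shift argument (moving the contour by $\delta\alpha^{2/3}\nu^{1/3}$ and keeping $(1-\kappa)\alpha^2\nu$), and the logarithmic loss $\brak{\ln(\nu/\alpha)}$ in \eqref{ineq:IDv2Reg} arises exactly as in \eqref{ineq:IDv2}: integrating the Green's-function kernel in the critical layer produces a logarithm of the ratio of the layer width $\eps^{3/2}$ to the outer scale. For the velocity estimate \eqref{ineq:IDuReg} I would apply the negative-order operator $(\alpha^2-\partial_{yy})^{-1}$ to $\widehat\omega_b$ and use the same elliptic/Green's-function bounds as in the proof of \eqref{ineq:IDu}, again with the new right-hand side.

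The main obstacle — and the place where this proof genuinely differs from the separated case rather than just bookkeeping — is the trace estimate on $\widehat\omega_f|_{y=\pm1}$ in $L^2_t$. Without separation we cannot gain the $e^{-\delta'\alpha}$ factor; instead we must exploit that although $\|\widehat\omega_f(t,\alpha,\pm1)\|$ does \emph{not} decay in the inviscid-damping sense (as emphasized in the abstract: such vorticity estimates are false in the interior), its \emph{time integral} against the enhanced-dissipation weight, or more precisely the combination that actually feeds the boundary-layer Green's function, is controlled, at the cost of one logarithm and of using the full $H^1$ norm. Concretely one must show something like $\|e^{\lambda t}\,\mathcal{K}_\pm[\widehat\omega_f|_{y=\pm1}]\|_{L^2_t}\lesssim \brak{\ln(\nu/\alpha)}(|\alpha|^{1/2}\|\widehat\omega_{in}\|_{L^2}+|\alpha|^{-1/2}\|\partial_y\widehat\omega_{in}\|_{L^2})$, where $\mathcal{K}_\pm$ is the boundary-to-interior solution operator; proving this requires carefully tracking how the Airy asymptotics of $\mathcal{K}_\pm$ interact with the oscillation $e^{-i\alpha yt}$ hidden in $\widehat\omega_f$, and it is the technical heart shared with \cite{ChenEtAl18}. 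Once this single estimate is in hand, parts (i), (ii), (iii) follow by the same reductions as in Theorem \ref{thm:Sepbd}, so I would structure the proof as: (1) recall decomposition and Green's-function representation of $\widehat\omega_b$; (2) prove the boundary-trace bounds on $\widehat\omega_f$; (3) quote the weighted resolvent/Airy lemmas to convert the trace bounds into the weighted, exponentially-decaying estimates on $\widehat\omega_b$; (4) deduce the velocity estimate by elliptic regularity.
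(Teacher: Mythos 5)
Your overall architecture does match the paper's: Theorem \ref{thm:RegData} is proved by running the proof of Theorem \ref{thm:Sepbd} verbatim (the reductions \eqref{ineq:evdt1}--\eqref{ineq:evdt5} and the kernel estimates of Lemma \ref{lem:Kpm} are unchanged), the only new ingredient being a replacement of Lemma \ref{lem:IDtrick} by an $H^1$-based estimate on the free evolution, namely Lemma \ref{lem:IDtrickReg}. Two corrections to your step (2), the first structural: the boundary corrector is not sourced by the vorticity trace $\widehat{\omega}_f|_{y=\pm1}$ (no boundary condition is imposed on the vorticity itself); the data entering the kernels $K_\pm$ are $\Phi_\pm(c)=\frac{\alpha}{2\pi}\int_{-1}^1 e^{\pm\alpha y}\mathcal{R}_f(y,c)\,dy$, which via Plancherel for the inverse Laplace transform, \eqref{ineq:ILTtrick}, are controlled by the weighted integrals $\int_{-1}^1 e^{\lambda t\pm\alpha y}\widehat{\omega}_f(t,\alpha,y)\,dy$ and their time derivatives, i.e.\ by the no-slip functionals \eqref{eq:bdyc} evaluated on $\omega_f$. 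Also, the logarithm in \eqref{ineq:IDv2Reg} comes from the kernel bound \eqref{ineq:LpLinfK} alone, exactly as in the separated case, not from any interaction of the Airy asymptotics with the oscillation in $\omega_f$; kernel and free-evolution estimates are decoupled by H\"older in $c_r$.

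The second point is the genuine gap: your proposed mechanism for the key estimate (an $H^1\hookrightarrow C^0$ trace bound combined with the $e^{-\nu\alpha^2t^3/12}$ decay) cannot yield the $\nu$-uniform right-hand side of the theorem. The free vorticity does not decay at all before the enhanced-dissipation time $t\sim \nu^{-1/3}\alpha^{-2/3}$, so with $\lambda$ as in \eqref{def:lambda} this argument only gives $\big\| e^{\lambda t}\,O(\|\widehat{\omega}_{in}\|_{H^1})\big\|_{L^2_t}\sim (\alpha^{2/3}\nu^{1/3})^{-1/2}\|\widehat{\omega}_{in}\|_{H^1}$, which loses $\nu^{-1/6}$ as $\nu\to0$. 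The paper's Lemma \ref{lem:IDtrickReg} instead exploits inviscid damping of the weighted integral via the Orr mechanism: writing the profile $\widehat{g}_f=e^{i\alpha yt}\widehat{\omega}_f$ and integrating by parts in $y$ against $e^{\pm\alpha y-i\alpha yt}$ gains a factor $\brak{\alpha t}^{-1}$, at the cost of $\|\partial_y\widehat{g}_f\|_{L^2_y}\lesssim\|\partial_y\widehat{\omega}_{in}\|_{L^2_y}$ (this is precisely where $H^1$ enters) plus boundary terms of $\widehat{g}_f$ controlled by Sobolev embedding — note the trace appears only \emph{after} this oscillatory integration by parts, already multiplied by $\brak{\alpha t}^{-1}$. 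Since $\|\brak{\alpha t}^{-1}\|_{L^2_t}\sim|\alpha|^{-1/2}$, this yields \eqref{ineq:L2estFree1Reg} and, through the $\alpha$-weights in \eqref{ineq:ILTtrick} and the $e^{-\alpha}$ gain in Lemma \ref{lem:Kpm}, the combination $|\alpha|^{1/2}\|\widehat{\omega}_{in}\|_{L^2}+|\alpha|^{-1/2}\|\partial_y\widehat{\omega}_{in}\|_{L^2}$; the time-derivative analogue \eqref{ineq:L2estFree2Reg} additionally uses maximal regularity for the viscous term on $t\leq1$ and the exact free formula for $t\geq1$. Without this mechanism your step (2) does not close; steps (1), (3), (4) coincide with the paper and are fine.
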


\subsubsection{Against the boundary: $L^2$ case}
In the general case of arbitrary $L^2$ data, our results are considerably weaker, however, they are still new. 
We are currently unsure how to apply the methods used in Theorems \ref{thm:Sepbd} and \ref{thm:RegData} to obtain good results. 
Instead, we use contour integral methods more similar to those employed in \cite{GN17} and the references therein. 
For $t\geq 1$, we will use the following the weight, which is a slight variation of \eqref{def:eta1} in the limit $p \to \infty$ 
\begin{align}\label{eta_ep}
\widetilde{\eta}_\ep(y) =\left\{\begin{array}{rrr}
\ep^{3/2},\quad& d(y,\pm 1)\leq \ep^{3/2};\\
\left(\frac{d(y,\pm1)}{\ep}\right)^{3}, \quad &d(y,\pm 1)\in [\ep^{3/2}, \ep]; \\
\exp\left( c_\star\frac{d(y,\pm 1)^{3/2}}{\ep^{3/2}}\right),\quad & d(y,\pm 1)\geq \ep.
\end{array}\right.
\end{align}
On the initial layer $t\leq 1$, we apply the following weaker weight that vanishes on the boundary 
\begin{align}\label{weight_eta_ep}
\widetilde{\eta}_\ep^{in}(y) =\left\{\begin{array}{rrr}
\left(\frac{d(y,\pm 1)}{\ep}\right)^{{3}},&\quad d(y,\pm 1)\leq \ep, \\
\exp\left(c_\star\frac{d(y,\pm1)^{3/2}}{\ep^{3/2}} \right),&\quad d(y,\pm 1)\geq \ep.  
\end{array}\right.
 \end{align}
\begin{theorem}[$L^2$ data touching the boundary] \label{thm:NonSepbd} 
Suppose that $\widehat{\omega}_{in}$ satisfies the no-slip condition $\int_{-1}^1 \widehat{\omega}_{in}(\alpha,y) e^{\pm \alpha y} dy = 0$. Then if we write the decomposition
\begin{align}
\widehat{\omega}(t,\alpha,y) = \widehat{\omega}_f(t,\alpha,y) + \widehat{\omega}_b(t,\alpha,y),  
\end{align}
where $\omega_f$ is the solution to \eqref{eq:free}  on $(x,y) \in \mathbb T \times \mathbb R$ and $\omega_b$ is the boundary corrector, $\omega_b$ satisfies the following for all $p \in [1,\infty]$ (implicit constants depending only on $\kappa$ and $p$), 
\begin{subequations} \label{ineq:thm2}
\begin{align}
||\widetilde{\eta}_\ep^{in}\widehat{\omega}_b(t,\alpha,\cdot)||_{L^p_y} &\lesssim \ln ^{{5/4}}\ep^{-1} \sqrt{\ln \ln \ep^{-1}}  e^{-\lambda t }||\widehat{\omega}_{in}(\alpha,\cdot)||_{L^p_y},\quad \forall t\leq1,\\
||\widetilde{\eta}_\ep\widehat{\omega}_b(t,\alpha,\cdot)||_{L^p_y} &\lesssim \ln ^{{5/4}}\ep^{-1} \sqrt{\ln \ln \ep^{-1}} e^{-\lambda t }||\widehat{\omega}_{in}(\alpha,\cdot)||_{L^p_y},\quad \forall t\geq1,
\end{align}
\end{subequations}
\end{theorem}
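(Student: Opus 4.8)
The plan is to realize the boundary corrector $\widehat\omega_b$ through an inverse-Laplace representation in the spectral parameter and to reduce the whole estimate to uniform bounds on an explicit Airy-function Green's function, in the spirit of the contour-integral analyses of \cite{GN17,GGN16I,GGN16II} combined with the resolvent splitting exploited in \cite{ChenEtAl18}.

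\textbf{Step 1: resolvent splitting and Airy structure.} For $t\ge 1$ I would write, with $\cL$ the no-slip channel operator (on the Fourier side, fixed $\alpha$) and $\cL_f$ the free operator behind \eqref{eq:free},
\[
\widehat{\omega}_b(t,\alpha,\cdot)=\frac{1}{2\pi i}\int_{\Gamma}e^{\mu t}\,\big((\mu-\cL)^{-1}-(\mu-\cL_f)^{-1}\big)\widehat{\omega}_{in}(\alpha,\cdot)\,d\mu ,
\]
where $\Gamma$ lies strictly to the left of the spectrum (stable, by \cite{Romanov73}) and is bent into the left half-plane so that $\mathrm{Re}\,\mu\le-\lambda$ off a bounded vertex arc, with $\lambda$ as in the statement; this is where the enhanced-dissipation rate $\delta\alpha^{2/3}\nu^{1/3}$ gets encoded, via the quantitative (pseudospectral) resolvent bounds underlying \cite{ChenEtAl18}, and $\Gamma$ can be taken so the integral converges absolutely, with the tails $|\mu|\gg\alpha$ contributing negligibly. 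Writing $R_b^\mu:=(\mu-\cL)^{-1}-(\mu-\cL_f)^{-1}$, the function $R_b^\mu f$ solves the homogeneous Airy-type equation $\nu\partial_{yy}\phi-(\mu+i\alpha y+\nu\alpha^2)\phi=0$ on $[-1,1]$, hence equals $A_+^\mu[f]\,\psi_+^\mu+A_-^\mu[f]\,\psi_-^\mu$ where $\psi_\pm^\mu$ are the Airy solutions decaying away from $y=\pm1$; the two coefficients solve the $2\times2$ linear system obtained by imposing \eqref{eq:bdyc} on $R_b^\mu f+(\mu-\cL_f)^{-1}f$, whose determinant $D(\mu)$ --- the determinant of the matrix with entries $\int_{-1}^1 e^{\pm\alpha y}\psi_\pm^\mu(y)\,dy$ --- plays the role of an Evans function and is bounded below using the absence of embedded or unstable modes together with the explicit Kelvin formula \cite{Kelvin87} for $(\mu-\cL_f)^{-1}$.

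\textbf{Step 2: Airy asymptotics and the weight.} Rescaling $y-y_c(\mu)=\ep\,e^{-i\pi/6}z$, with turning point $y_c(\mu)$ satisfying $\mathrm{Re}\,y_c(\mu)=-\mathrm{Im}\,\mu/\alpha$ and $|\mathrm{Im}\,y_c(\mu)|\lesssim\ep$ on $\Gamma$, turns the equation into $\phi''=z\phi$, and the uniform asymptotics of $\mathrm{Ai}$ and of its primitive give, for $c_\star$ small enough, that $|\psi_\pm^\mu(y)|$ is controlled by the reciprocal of the relevant branch of $\widetilde{\eta}_\ep$ in \eqref{eta_ep}: $O(1)$ inside an $O(\ep)$ layer, algebraic transition on $[\ep^{3/2},\ep]$, and decay $\exp(-\tfrac{2}{3}(d(y,\pm1)/\ep)^{3/2})$ beyond, which strictly beats the weight's growth $\exp(c_\star(d(y,\pm1)/\ep)^{3/2})$. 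Feeding this together with the lower bound on $D(\mu)$ and $L^p$-to-wall-functional bounds on $(\mu-\cL_f)^{-1}$ coming from the Kelvin formula yields a pointwise bound on the kernel $\mathcal G_\mu(y,y')$ of $f\mapsto\widetilde{\eta}_\ep R_b^\mu f$, and on its adjoint kernel, with constants that degenerate only \emph{logarithmically} in $\ep$ as $\mathrm{Re}\,y_c(\mu)\to\pm1$, i.e.\ as $\mathrm{Im}\,\mu\to\mp\alpha$.

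\textbf{Step 3: contour integration and the logarithmic loss.} Inserting the kernel bounds into $K_t(y,y')=\tfrac{1}{2\pi i}\int_\Gamma e^{\mu t}\mathcal G_\mu(y,y')\,d\mu$ and using $e^{\mathrm{Re}\,\mu\, t}\le e^{-\lambda t}$ off the vertex, the $L^1\to L^1$ and $L^\infty\to L^\infty$ bounds on $\widehat{\omega}_{in}\mapsto\widetilde{\eta}_\ep\widehat\omega_b(t)$ reduce to estimating $\sup_{y'}\int_\Gamma\!\int_{-1}^1|\mathcal G_\mu(y,y')|\,dy\,|d\mu|$ and the symmetric quantity, after which Riesz--Thorin interpolation gives all $p\in[1,\infty]$. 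Because $\widehat\omega_{in}$ carries no extra regularity, the only decay available along $\Gamma$ is the borderline $|\mu|^{-1}$, so this last estimate is organized by splitting $\Gamma$ into a near-critical-layer arc (where $\mathrm{Re}\,y_c(\mu)$ lies within a distance $\rho$ of a wall) and its complement, bounding the near part by a logarithmically-weighted $L^2$-in-$\mu$ estimate and Cauchy--Schwarz, and optimizing over $\rho$; the resulting balance is exactly what produces the factor $\ln^{5/4}\ep^{-1}\sqrt{\ln\ln\ep^{-1}}$ of \eqref{ineq:thm2}.

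\textbf{Step 4: the short-time regime and the main obstacle.} For $t\le1$ the contour cannot be pushed far enough left for enhanced dissipation to contribute, so instead I would estimate $\widehat\omega_b$ directly from the Duhamel/heat-kernel representation of $e^{t\cL}-e^{t\cL_f}$: on this short interval the corrector is confined to an $O(\sqrt{\nu t})\subset O(\ep)$ layer and vanishes at the wall, which is precisely why the weaker, boundary-vanishing weight $\widetilde{\eta}_\ep^{in}$ of \eqref{weight_eta_ep} appears there, the same Airy/Green's-function bounds again giving the stated $L^p\to L^p$ estimate with the same logarithmic factor. The principal difficulty throughout is the case in which the turning point $y_c(\mu)$ collides with a wall: there the naive asymptotics of $\psi_\pm^\mu$ degenerate and the lower bound on $D(\mu)$ deteriorates, so one must track the logarithmic corrections precisely \emph{and} arrange the near/far split of $\Gamma$ so that these logs multiply rather than compound --- this is the origin of the (non-sharp-looking) exponent $5/4$ and of the $\sqrt{\ln\ln\ep^{-1}}$, and it is the step I expect to demand the most care; a secondary nuisance is that the lack of regularity rules out any integration by parts in $\mu$, forcing the oscillation/Cauchy--Schwarz route and hence a somewhat delicate choice of $\Gamma$ near its vertex.
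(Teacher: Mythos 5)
Your skeleton coincides with the paper's: split the resolvent into the free Airy resolvent plus a homogeneous corrector spanned by the two Airy solutions, determine the coefficients from \eqref{eq:bdyc} with the Evans determinant $D(\alpha,c)$, compare Airy/Langer asymptotics against the weight, bound the kernel in both variables and conclude by Schur/interpolation after pushing the contour to $\mathrm{Re}\,\mu\le-\lambda$. But two of your load-bearing steps are not supplied by the ideas you invoke. First, the Evans function: "absence of unstable or embedded modes" only gives $D\neq 0$, not a quantitative lower bound uniform in $\nu,\alpha,c$. What the argument actually needs (and what the paper proves in Appendix \ref{sec:Evans} by a quantitative Romanov-type argument) is a bound like \eqref{D_al_c_lower_bound}, i.e. $|D|\gtrsim |\int e^{-\alpha w}H_-(W)dw|\,|\int e^{\alpha w}H_+(W)dw|$ with the precise exponentially small Airy factors, because in the kernels these must cancel, up to polynomial factors, the exponentially large values of $H_\pm$ at points between the critical layer and the wall. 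Without that, your Step 2 claim that the weighted kernel degenerates "only logarithmically" has no support; the degeneration is a priori exponential and the whole point is the exact cancellation.

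Second, and more seriously, the convergence of the spectral integral is mis-assessed. For $y$ with $d(y,\pm1)\le\ep^{3/2}$ the weighted kernel along a straight contour does \emph{not} decay like $|\mu|^{-1}$: the exponential growth of $H_-(Y)$ is cancelled by $1/|A_0(\tfrac{-1-c_r}{\ep}+\delta i)|$ only up to a polynomial, leaving growth of order $\langle(1+|c_r|)/\ep\rangle^{1/2}$ in $c_r$ (this is visible in the paper's estimate of $T_1$ in \eqref{T1T2T3}). So "the tails contribute negligibly" is false as stated, and a near/far split with Cauchy--Schwarz and optimization over $\rho$ cannot manufacture the missing integrability. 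The paper's resolution is structural: for $t\ge1$ the contour is bent into a trapezoid only after a connection segment of length $\sim\ln\ep^{-1}$, so that $e^{\alpha c_i t}$ supplies decay in the tail, and the integral over the connection zone is what produces the $\ln^{3/2}\ep^{-1}\,\ln\ln\ep^{-1}$ column Schur bound (see \eqref{ineq:lnln}); combined with the $\ln\ep^{-1}$ row bound from the near-spectrum segment, Schur's test — not an optimized splitting parameter — yields exactly $\ln^{5/4}\ep^{-1}\sqrt{\ln\ln\ep^{-1}}$. For $t\le1$ the bending gains nothing, and the paper keeps the straight contour, using the cubic vanishing of $\widetilde\eta_\ep^{in}$ at the wall to gain factors of $(y\pm1)/\ep$ and hence integrability in $c_r$; your heuristic that the corrector itself "vanishes at the wall" is not correct — $\omega_b$ is precisely the wall-generated boundary-layer vorticity and is largest there, which is why the weight, not the solution, must vanish. (Minor: your Bromwich contour should lie to the right of the spectrum, not the left, before being deformed into $\mathrm{Re}\,\mu\le-\lambda$.)
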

\begin{remark}
The pre-factor $\ln ^{{5/4}}\ep^{-1} \sqrt{\ln \ln \ep^{-1}}$ is not expected to be sharp. 
\end{remark}

\begin{remark}
In the initial layer $t\leq1$, the  $t\geq 1$ proof derives the following estimate:
\begin{align*}
||\eta_\ep\widehat{\omega}||_{L^p}
\lesssim\frac{1}{t} \ln ^{{5/4}}\ep^{-1} \sqrt{\ln \ln \ep^{-1}}  e^{-\lambda t} ||\widehat{\omega}_{in}||_{L^p}.
\end{align*}
\end{remark}

\subsubsection{Notation and conventions} \label{sec:Note}
We define $\brak{x} = (1 + \abs{x}^2)^{1/2}$. 
The fourier transform in $x \mapsto \alpha \in \mathbb Z$ and $(x,y) \mapsto \mathbb Z \times \Real$ is defined via
\begin{align}
\widehat{f}(\alpha,y) &= \frac{1}{\sqrt{2\pi}} \int_{\mathbb T} e^{-i x \alpha} f(x,y) dx, \\
f(x,y) &= \frac{1}{\sqrt{2\pi}} \sum_{\alpha \in \mathbb Z}  e^{i x \alpha} \hat{f}(k,y), \\
\widehat{f}(\alpha,\eta) &= \frac{1}{2\pi} \int_{\mathbb T \times \Real} e^{-i x \alpha - i y \eta} f(x,y) dx dy, \\
f(x,y) & = \frac{1}{2\pi} \sum_{\alpha \in \mathbb Z} \int_{\Real} e^{i x \alpha + iy \eta} \hat{f}(\alpha,\eta ) d\eta. 
\end{align}
We write $f \lesssim g$ (resectively $f\gtrsim g)$ if there exists a constant $C> 0$ such that $f \leq C g$ (respectively $Cf \geq g$). In the entire paper, these implicit constants will \emph{NEVER} depend on $\nu,\alpha$, or $t$ (and hence also not on $\ep$). If we wish to emphasize the dependence of the implicit constants on certain other parameters $\beta, \gamma,...$ we write $f \lesssim_{\beta,\gamma,...} g$. We write $f \approx g$ if $f \lesssim g$ and $f \gtrsim g$. 

For complex numbers $c \in \Complex$ we denote $c = c_r + i c_i$ with $c_r,c_i \in \Real$. 
For the entire paper, we set the following branch cut convention: for $z \in \Complex$, $\ph z \in (-\pi,\pi)$ we take
\begin{align}
z^{3/2} = \abs{z}^{3/2} e^{\frac{3}{2} i \ph z}. \label{eq:Branch}
\end{align}

\section{Outline and formulation of resolvent estimates} \label{sec:OrrSomm}
In this section we outline the argument and leave the details to be carried out in later sections. 

\subsection{Resolvent formulation and the Orr-Sommerfeld problem}
As usual in linearized fluid mechanics, the analysis requires studying the resolvent.
Define $\Delta_\alpha = \partial_{yy} - \alpha^2$, the linearized operator
\begin{align}
\mathcal{L}_\alpha = -i\alpha y + \nu \Delta_\alpha, 
\end{align}
and the resolvent operator $\mathcal{R}(\alpha,z)$ as
\begin{align}
\mathcal{R}(\alpha,z) = \left(\mathcal{L}_\alpha - z \right)^{-1}, 
\end{align}
defined on the resolvent set
$$\rho(\mathcal{L}_\alpha) := \set{z \in \Complex : \mathcal{L}_\alpha - z\, \textup{ is invertible}}. $$
Recall that the spectrum is defined as $\sigma(\mathcal{L}_\alpha) = \Complex \setminus \rho(\mathcal{L}_\alpha)$. Let $\gamma$ be such that the $\sigma(\mathcal{L}_\alpha) \subset \set{z \in \Complex:  z_r  < \gamma}$. 
The semigroup is then expressed as the Dunford-Pettis integral (see e.g. \cite{EN01}), 
\begin{align}
e^{ \mathcal{L}_\alpha t} \widehat{\omega}_{in}= \frac{1}{2\pi i} \int_{\gamma \pm i \infty} e^{z t} \mathcal{R}(\alpha,z)\widehat{\omega}_{in} dz. 
\end{align}
Here $\int_{\gamma \pm i \infty} f(z) dz$ denotes a usual straight line Bromwich contour extending from $\gamma + i\infty$ to $\gamma - i \infty$.
It is convenient to reformulate the integral to match more closely the approach to the inviscid problem as in e.g. \cite{DR81,WZZ18}. 
To that end, define $c = iz/\alpha$ and write (abusing notation and not renaming $\mathcal{R}$)
\begin{align}
e^{\mathcal{L}_\alpha t} \widehat{\omega}_{in} = - \frac{\alpha}{2\pi } \int_{\frac{i\gamma}{\alpha} \pm \infty} e^{-i\alpha c t} \mathcal{R}(\alpha,c)\widehat{\omega}_{in} dc. \label{eq:MasterContour}
\end{align}
Here $\int_{i\gamma \pm  \infty} f(z) dz$ denotes a rotated straight line Bromwich contour extending from $i\gamma - \infty$ to $i\gamma + \infty$.
Note this simply rescales and switches the convention about the roles of real and imaginary parts of the spectral parameter.
As is standard for both the viscous and inviscid problems \cite{DR81}, we next express $\mathcal{R}$ in terms of the solution of the Orr-Sommerfeld problem (known as the Rayleigh problem in the inviscid case).
Define $\phi$ to be the streamfunction of the resolvent: $\mathcal{R}(\alpha,c)\widehat{\omega}_{in} = \Delta_\alpha \widehat{\phi}$. Then we obtain the family of singular ODEs paramterized by both $\alpha \in \mathbb Z$ and $c \in \Complex$, 
\begin{align}
\Delta_\alpha^2 \widehat{\phi} - \frac{i\alpha}{\nu}(y-c)\Delta_\alpha\widehat{\phi} & = - \frac{\widehat{\omega}_{in}}{\nu} ,\\
\widehat{\phi}\big|_{y = \pm 1 } = \partial_y\widehat{\phi}\big|_{y = \pm 1} & = 0.
\end{align}

\subsection{Derivation of boundary corrector}
We next divide the resolvent into `free' $\mathcal{R}_f$ and `boundary-corrector' $\mathcal{R}_b$ contributions (and correspondingly the streamfunction)
\begin{align}\label{Resolvent_decomposition}
\Delta_\alpha \widehat{\phi} = \mathcal{R}_f + \mathcal{R}_b = \Delta_\alpha \widehat{\phi}_f + \Delta_\alpha \widehat{\phi}_b
\end{align}
where $\mathcal{R}_f$ is chosen as the decaying solution to the Airy problem on the whole line 
\begin{align}
\Delta_\alpha \mathcal{R}_f  - \frac{i\alpha}{\nu}(y - c) \mathcal{R}_f & = - \frac{\widehat{\omega}_{in}}{\nu},
\end{align}
and $\mathcal{R}_b = \Delta_\alpha \widehat{\phi}_b$ where $\widehat{\phi}_b$ solves the homogeneous Orr-Sommerfeld problem with the suitable boundary conditions to ensure that $\widehat{\phi} = \widehat{\phi}_f + \widehat{\phi}_b$ satisfies the no-slip boundary conditions 
\begin{subequations}
\begin{align}
&\Delta_\alpha^2 \widehat{\phi}_b  - \frac{i\alpha}{\nu}(y - c)\Delta_\alpha \widehat{\phi}_b  = 0, \label{def:HOS} \\
&\widehat{\phi}_b\big|_{y = \pm 1}  = -\widehat{\phi}_f\big|_{y \pm 1} ,\\
&\partial_y\widehat{\phi}_b\big|_{y = \pm 1}  = -\partial_y\widehat{\phi}_f\big|_{y \pm 1}.
\end{align}
\end{subequations}
Note that we will always be considering $c \in \Complex$ (usually with non-vanishing real and imaginary parts). 
The decomposition of the resolvent yields a corresponding decomposition of the evolution:
\begin{align}
e^{t \mathcal{L}_\alpha}\widehat{\omega}_{in} = \widehat{\omega}_f(t) + \frac{\alpha }{2\pi } \int_{\frac{i\gamma}{\alpha} \pm \infty} e^{-i\alpha c t} \Delta_\alpha \widehat{\phi}_b dc, \label{eq:EvoD}
\end{align}
where $\widehat{\omega}_f(t,\alpha, y)$ solves the following `free' evolution on \emph{the whole line} $y \in \mathbb R$,
\begin{subequations} \label{def:free}
\begin{align}
&\partial_t \widehat{\omega}_f + i\alpha y \widehat{\omega}_f - \nu \Delta_\alpha \widehat{\omega}_f  = 0,  \quad y \in \mathbb R,\\
&\widehat{\omega}_f(0,\alpha,y)  = \widehat{\omega}_{in}(\alpha,y). 
\end{align}
\end{subequations}
The solution $\omega_f$ of course does not satisfy the boundary conditions for $t > 0$; these are corrected by the latter term in \eqref{eq:EvoD}.
Optimal estimates on $\omega_f$ are straightforward via Fourier transform (see Appendix \ref{sec:Free}), hence the precise analysis of the boundary corrector is the remaining problem.
We denote the boundary corrector $\omega_b$ as
\begin{align}
\widehat{\omega}_b(t,\alpha,y) = \frac{\alpha }{2\pi } \int_{\frac{i\gamma}{\alpha} \pm \infty} e^{-i\alpha c t} \Delta_\alpha \widehat{\phi}_b(\alpha,y) dc. 
\end{align}
The homogeneous Orr-Sommerfeld problem \eqref{def:HOS} has four linearly independent solutions: $e^{\pm \alpha y}$ and $H_{\pm}(Y)$, where $H_{\pm}$ are expressed via Airy functions (see e.g. \cite{Wasow53,DR81}),
\begin{subequations} \label{def:HYetc}
\begin{align}
Y  :&= \frac{y - c - i \alpha \nu}{\eps}, \label{Langer_variable} \\
H_+(Y) & = Ai \left( e^{5i\pi/6} Y \right), \\
H_-(Y) & = Ai \left( e^{i\pi/6} Y \right). 
\end{align}
\end{subequations}
See \eqref{eq:Branch} in Section \ref{sec:Note} for the branch cut convention we are taking for the entire work. 
The variable $Y$ is called the Langer variable. \emph{For the rest of the paper, capital letters $Y,Z,...$  etc denote the corresponding Langer variable (depending also on $\alpha, \nu, c$), for example $Z = \frac{z-c-i\alpha \nu}{\eps}$}. 
The use of such variables is standard for studying the Orr-Sommerfeld problem (see e.g. \cite{DR81,Romanov73,ChenEtAl18,GGN16I,GGN16II} and the references therein). 
The boundary condition \eqref{eq:bdyc} implies the analogous condition on the resolvent (see \cite{Romanov73} or \cite{ChenEtAl18})
\begin{align}
\int_{-1}^1 e^{\pm \al y}\widehat{\omega}(\alpha,y)dy =\int_{-1}^1e^{\pm \al y} \de_\al \widehat{\phi}(\alpha,y) dy =0. 
\end{align}
Therefore, the boundary-corrector resolvent can be expressed in the following form
\begin{align}
\mathcal{R}_b(c,y) = a(c) H_-(Y) + b(c) H_+(Y).
\end{align}
where $a(c)$ and $b(c)$ solve the following linear system (assuming for a moment that the system is solvable)
\begin{align}
\begin{pmatrix}
-\int_{-1}^1 e^{\alpha y} \mathcal{R}_f(y,c) dy \\
-\int_{-1}^1 e^{-\alpha y} \mathcal{R}_f(y,c) dy
\end{pmatrix}
=
\begin{pmatrix}
\int_{-1}^1 e^{\alpha w} H_-(W) dw  &   \int_{-1}^{1} e^{\alpha w} H_+(W) dw \\
\int_{-1}^1 e^{-\alpha w} H_-(W) dw  &   \int_{-1}^{1} e^{-\alpha w} H_+(W) dw
\end{pmatrix}
\begin{pmatrix}
a(c) \\
b(c)
\end{pmatrix}
.
\end{align}
Denoting $D(\alpha,c)$ the determinant of the matrix (known as the the \emph{Evans function}) we can solve this whenever $D(\alpha,c) \neq 0$ and we have
\begin{align}
\begin{pmatrix}
a(c) \\
b(c)
\end{pmatrix}
=
-\frac{1}{D(\alpha,c)}
\begin{pmatrix}
\int_{-1}^1 e^{-\alpha w} H_+(W) dw  &   -\int_{-1}^{1} e^{\alpha w} H_+(W) dw \\
-\int_{-1}^1 e^{-\alpha w} H_-(W) dw  &   \int_{-1}^{1} e^{\alpha w} H_-(W) dw
\end{pmatrix}
\begin{pmatrix}
\int_{-1}^1 e^{\alpha y} \mathcal{R}_f(y,c) dy \\
\int_{-1}^1 e^{-\alpha y} \mathcal{R}_f(y,c) dy
\end{pmatrix}. \label{eq:abdef}
\end{align}
The spectrum of the operator, $\sigma(\mathcal{L}_\alpha)$, is precisely where $D(\alpha,c) = 0$.

\subsection{Evans function estimates}
The first results one needs are quantitative lower bounds on the Evans function wherever one is drawing the contour integrals of the resolvent. 
First estimates were obtained by Romanov \cite{Romanov73}, and more precise estimates (at least for $\abs{\alpha} \gtrsim 1$) were recently obtained by Chen et. al. in \cite{ChenEtAl18}.
Naturally, we require estimates similar to those in \cite{ChenEtAl18}; the specific estimates used in our work are sketched in Appendix \ref{sec:Evans} for the readers' convenience. 
The main estimate we use is essentially the following, see Appendix \ref{sec:Evans} for more precision.
As in \cite{ChenEtAl18}, the proof is a more quantitative variation of Romanov's arguments \cite{Romanov73} using precise Airy function asymptotics. 
See Section \ref{sec:Note} for notation conventions of complex numbers and branch cuts. 

\begin{lemma} \label{lem:ReadableEvans}
The Evans function $D(\al,c)$ is non-zero in the right half-plane $\set{z \in \Complex: z_i \geq -\al\nu-\delta \al^{-1/3} \nu^{1/3}}$. Moreover, for $c \in \Complex$ with $c_i = -\al\nu-\delta \al^{-1/3} \nu^{1/3}$ (where $\delta$ is a universal constant chosen sufficiently small). 
  Then the absolute value of Evans function $D(\al,c)$ is bounded from below by
\begin{align}
|D(\al,c)|\gtrsim&\left|\int_{-1}^1 e^{-\al w}H_-(W)dw\right|\left|\int_{-1}^1 e^{\al w}H_+(W)dw\right|
,\label{D_al_c_lower_bound}
\end{align}
with the following lower bounds depending on the region of the complex plane:
\begin{align}
\left|\int_{-1}^1e^{-\al w}H_-(W)dw\right|\gtrsim &
\frac{\ep e^\al}{\lan\al\ep\ran} \left\{\begin{array}{rr}
 1, & \quad \abs{\frac{-1-c_r}{\eps}} \lesssim 1, \\ 
\brak{\frac{-1-c_r}{\ep}}^{-3/4}\exp\left({\frac{2}{3}Re((\frac{-1-c_r}{\ep}+\delta i)^{3/2}e^{i\pi/4})}\right),&\quad \frac{-1-c_r}{\eps} \lesssim -1, \\ 
\brak{\frac{-1-c_r}{\ep}}^{-3/4}\exp\left({-\frac{2}{3}Re((\frac{-1-c_r}{\ep}+\delta i)^{3/2}e^{i\pi/4})}\right),&\quad \frac{-1-c_r}{\eps} \gtrsim 1, 
\end{array}\right.\nonumber\\
\left|\int_{-1}^1e^{\al w}H_+(W)dw\right|\gtrsim & \frac{\ep e^\al}{\lan\al\ep\ran} \left\{\begin{array}{rr}
1, & \quad \abs{\frac{-1+c_r}{\eps}} \lesssim 1 ,\\ 
\brak{\frac{-1+c_r}{\ep}}^{-3/4}\exp\left({\frac{2}{3}Re((\frac{-1+c_r}{\ep}+\delta i)^{3/2}e^{i\pi/4})}\right),&\quad \frac{-1+c_r}{\ep} \lesssim -1, \\ 
\brak{\frac{-1+c_r}{\ep}}^{-3/4} \exp\left({-\frac{2}{3}Re((\frac{-1+c_r}{\ep}+\delta i)^{3/2}e^{i\pi/4})}\right),&\quad \frac{-1+c_r}{\ep} \gtrsim 1. 
\end{array}\right.\nonumber
\end{align}
\end{lemma}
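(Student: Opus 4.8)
The plan is to follow the route of Romanov \cite{Romanov73}, sharpened as in \cite{ChenEtAl18}: write $D(\al,c)$ as the $2\times2$ determinant of the four scalar boundary integrals and bound each by uniform Airy asymptotics along the contour in question. Set $A_{11}=\int_{-1}^1 e^{\al w}H_-(W)\dw$, $A_{12}=\int_{-1}^1 e^{\al w}H_+(W)\dw$, $A_{21}=\int_{-1}^1 e^{-\al w}H_-(W)\dw$, $A_{22}=\int_{-1}^1 e^{-\al w}H_+(W)\dw$, so that $D = A_{12}A_{21}-A_{11}A_{22}$; then \eqref{D_al_c_lower_bound} is precisely the statement that the off-diagonal product $A_{12}A_{21}$ dominates, and the remaining inequalities are lower bounds on $\abs{A_{21}}$ and $\abs{A_{12}}$ separately. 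The structural fact I would record first is that on the line $c_i=-\al\nu-\delta\ep$ the Langer variable has constant imaginary part, $\mathrm{Im}\,Y=(-c_i-\al\nu)/\ep=\delta$, so the rotated arguments $e^{\pm i\pi/6}W$ stay a fixed angular distance from the negative real axis and hence uniformly away from the zeros and oscillatory region of $Ai$ (being near a zero would force $\mathrm{Im}\,Y\gtrsim1$). Thus the one-term asymptotics $Ai(\zeta)=\tfrac1{2\sqrt\pi}\zeta^{-1/4}e^{-\frac23\zeta^{3/2}}(1+O(\abs{\zeta}^{-3/2}))$ hold uniformly for $\abs{\zeta}\gtrsim1$, while $Ai$ and $Ai'$ are bounded and bounded away from $0$ on the complementary compact set.

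Next I would run a Laplace/steepest-descent analysis for each $A_{ij}$. Because $e^{i\pi/6}W$ lies in the decay sector of $Ai$ exactly when $\mathrm{Re}\,W>0$ (that is, $w$ to the right of the turning point $y=c_r$) and in a growth sector when $\mathrm{Re}\,W<0$, and symmetrically for $H_+$, the modulus of $e^{-\al w}H_-(W)$ is maximized at $w=-1$ and that of $e^{\al w}H_+(W)$ at $w=1$ — in each case both factors peak at that endpoint. Near $w=-1$ put $w=-1+t$, $W=Y_0+t/\ep$ with $Y_0=(-1-c-i\al\nu)/\ep$ (so $\mathrm{Re}\,Y_0=(-1-c_r)/\ep$, $\mathrm{Im}\,Y_0=\delta$); the dominant exponent $-\al w-\frac23(e^{i\pi/6}W)^{3/2}$ has $\partial_t$ with strictly negative real part at $t=0$, of size $\approx\al\ep+\abs{Y_0}^{1/2}$ in the $t/\ep$ scale, in each of the three regimes, so the integral localizes to $t\lesssim\ep/\max\set{\al\ep,\abs{Y_0}^{1/2}}$, over which the phase winds by only $O(1)$ — no cancellation. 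Reading off (modulus at $w=-1$)$\,\times\,$(width) yields $\abs{A_{21}}\gtrsim\ep e^\al\abs{Y_0}^{-1/4}e^{-\frac23\mathrm{Re}((e^{i\pi/6}Y_0)^{3/2})}/\max\set{\al\ep,\abs{Y_0}^{1/2}}$, which after the branch-cut bookkeeping \eqref{eq:Branch} (whose sign wrapping across $\ph Y_0=5\pi/6$ accounts for the two exponential cases) is exactly the stated bound, with $\brak{\al\ep}$ absorbing $\abs{Y_0}^{1/2}$ when $\abs{Y_0}\lesssim1$ and the near-turning-point regime $\abs{Y_0}\lesssim1$ handled through boundedness and non-vanishing of $Ai(e^{i\pi/6}\cdot)$. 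The bound on $\abs{A_{12}}$ is the mirror image at $w=1$.

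For the cross terms $A_{11},A_{22}$ I would prove upper bounds of the same form but with the opposite sign in the exponent, giving $\abs{A_{11}A_{22}}\le\tfrac12\abs{A_{12}A_{21}}$: the rate that $H_-$ \emph{loses} by decaying toward $w=1$ (resp.\ $H_+$ toward $w=-1$) outweighs what $e^{\pm\al w}$ gains there, the deficit being a genuinely negative exponent of order $-\min\set{\al,\ep^{-1}}$ times the distance from the turning point to the boundary. Then $\abs{D}\ge\abs{A_{12}A_{21}}-\abs{A_{11}A_{22}}\ge\tfrac12\abs{A_{12}A_{21}}$, which is \eqref{D_al_c_lower_bound} and in particular $D\ne0$ on the line. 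For non-vanishing on the whole half-plane above the line, I would note that when $c_i$ is large (equivalently $\mathrm{Re}\,z$ large) the operator $\mathcal{L}_\al-z$ is boundedly invertible — the advection term is skew-adjoint, the viscosity dissipative — so $D\ne0$ there; combined with Romanov's qualitative fact that $\sigma(\mathcal{L}_\al)$ lies strictly to the left of $c_i=-\al\nu-\delta\ep$ for $\delta$ small, and analyticity of $c\mapsto D(\al,c)$, this finishes the claim. Alternatively, the same asymptotic argument applies verbatim in the interior, where $\mathrm{Im}\,Y>\delta$ only strengthens every estimate.

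I expect the main obstacle to be the uniformity of the Airy analysis across all regimes, concretely three points: (i) when the turning point $c_r$ is within $O(\ep)$ of a boundary point, so $\abs{Y_0}\lesssim1$ and the one-term asymptotic degenerates, one must rely on continuity of $Ai(e^{\pm i\pi/6}\cdot)$ together with the $\delta$-offset to exclude cancellation by hand; (ii) faithfully tracking the competition between the Airy boundary-layer width $\sim\ep\abs{Y_0}^{-1/2}$ and the exponential width $\sim\al^{-1}$, which is what produces the factor $\brak{\al\ep}$ in the denominators; and (iii) the exponential-rate comparison making $A_{11},A_{22}$ subdominant, which hinges delicately on the branch convention \eqref{eq:Branch} and on where $\mathrm{Im}\,Y=\delta$ sits relative to the Stokes lines of $Ai(e^{\pm i\pi/6}\cdot)$ — this is exactly the place where the quantitative strengthening of Romanov's argument is really needed.
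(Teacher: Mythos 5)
Your overall architecture (write $D=A_{12}A_{21}-A_{11}A_{22}$, bound the off-diagonal product from below and show the diagonal product is subdominant) is the same as the paper's, but your mechanism for the lower bounds is genuinely different: you propose a direct endpoint Laplace/steepest-descent analysis of each boundary integral, whereas the paper (following Romanov) integrates by parts against the \emph{integrated} Airy function $A_0(z)=\int_{ze^{i\pi/6}}^{i0+\infty}Ai(t)\,dt$ and reduces everything to the ratio $u(C_j^\star,t)=A_0(C_j^\star+t)/A_0(C_j^\star)$, the two global facts being Romanov's Lemma \ref{lem_0} (no zeros of $A_0$ in the sector and in the half-plane $\set{z_i\le\delta_0}$) and the strict negativity $a=-\max_{z_i\le\delta}\mathrm{Re}\,(A_0'/A_0)>0$ in \eqref{defn_of_a}, which give $\abs{u(C_j^\star,t)}\le e^{-at}$ and turn both the domination inequality \eqref{D_al_c_lower_bound} and the bounds \eqref{H_+_int}--\eqref{H_-_int} into explicit hyperbolic-function inequalities (the $F(\al,a/\ep)$ computation, with a low-mode/high-mode split in $\al\ep$).

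The gap in your route is exactly at the point you flag as obstacle (i), and it is not a technicality: your proposal contains no substitute for the $A_0$ input. When $\abs{(\mp1-c_r)/\ep}\lesssim 1$ and $\al\ep\lesssim 1$, the boundary integral you must bound below is, after rescaling, essentially $\ep e^{\al}\int_0^{2/\ep}e^{-\al\ep t}Ai\big(e^{i\pi/6}(C_j^\star+t)\big)\,dt\approx \ep e^{\al}\,A_0(C_j^\star)$ (up to a fixed rotation), and its non-vanishing \emph{is} the non-trivial content of the first line of the stated estimates (the lower bound $\gtrsim \ep e^{\al}/\brak{\al\ep}$). Pointwise non-vanishing of $Ai(e^{i\pi/6}\,\cdot)$ on the shifted line plus ``the phase winds by $O(1)$'' does not rule out cancellation of this integral over the $O(1)$-sized Airy window together with its tail; the zero-free region of $A_0$ is a global property that must be imported (Lemma \ref{lem_0}) or reproved, and the same quantity $a>0$ is what supplies the definite margin $q$ in the diagonal-dominance step (your claimed deficit $-\min\set{\al,\ep^{-1}}\cdot\mathrm{dist}$ degenerates precisely in the delicate regime $\al\ep\sim1$ with the turning point inside or within $O(\ep)$ of $[-1,1]$, which is where the paper needs the quantitative inequality \eqref{proof_of_lem_1_1} rather than an exponent comparison). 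Two smaller points: your lower-bound heuristic in the regimes $\abs{Y_0}\gg1$ is plausible but must be made rigorous (e.g.\ by the same integration by parts), and your remark that in the interior of the half-plane ``$\mathrm{Im}\,Y>\delta$ only strengthens every estimate'' has the sign backwards: for $c_i>-\al\nu-\delta\ep$ one has $\mathrm{Im}\,Y=(-c_i-\al\nu)/\ep<\delta$, possibly very negative, and the paper handles this because Lemma \ref{lem_0} is valid in the whole half-plane $\set{z_i\le\delta_0}$; your alternative of citing Romanov's spectral localization plus analyticity does not by itself exclude zeros in the intermediate strip without an additional continuity/argument-principle step.
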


\begin{remark}
If $\al$ is large in the sense that  $\al\gtrsim \nu^{-1/2}$, then by choosing $c_i=-(1-\kappa)\al \nu$, the estimates above can be improved by dropping the $\frac{1}{\lan\al\ep\ran}$ factor. See Appendix \ref{sec:Evans} for more details.
\end{remark}
\subsection{The proof of Theorem \ref{thm:Sepbd}: Initial vorticity supported away from the boundary}
\textbf{Step 1: Estimates via resolvents:} \\ 
For future notational convenience, define
\begin{align}
\Phi_{\pm}(c) = \frac{\alpha}{2\pi} \int_{-1}^1 e^{\pm \alpha y} \mathcal{R}_f(y,c) dy. \label{def:Phi}
\end{align}
Then, from formula \eqref{eq:abdef}, we can write the boundary corrector resolvent in terms of $\Phi_{\pm}$ for suitably defined kernels $K_{\pm}$,  
\begin{align}\label{omega_b_K+K-}
\frac{\alpha}{2\pi} \mathcal{R}_b(\alpha, y, c) = K_-(y,c)  \Phi_{+}(c) + K_+(y,c)  \Phi_{-}(c).  
\end{align}
We will use a variation of the clever idea due to Chen et. al. \cite{ChenEtAl18}, which leverages the observation that one can obtain good estimates on $\Phi_{\pm}$ through $\omega_f$, the free evolution solving \eqref{def:free}. This works even better when the initial data is away from the boundaries. 

First, consider obtaining pointwise-in-time $L^p$ estimates on the vorticity as in \eqref{ineq:SemiGrp}.
Choosing $\gamma = -\lambda$ in \eqref{eq:EvoD}, integrating in $y$, and applying Minkowski's inequality,
there holds (denoting $c_i = -\lambda/\alpha$)
\begin{align}
\norm{e^{\lambda t} \eta_{\eps,p} \widehat{\omega}_b}_{L^p} & \lesssim \sum_{s \in \set{+,-}} \norm{\eta_{\eps,p} \frac{K_s(\cdot,\cdot + i c_i)}{\brak{\cdot + i c_i}}}_{L^p_y L^2_{c_r}} \norm{\brak{\cdot + i c_i} \Phi_{-s}(\cdot + ic_i)}_{L^2_{c_r}}, \label{ineq:evdt1} 
\end{align}
where we use the notation $-s = +$ if $s = -$ and $-s = -$ if $s = +$.
To obtain the inviscid damping estimates on the vorticity in \eqref{ineq:IDv1}, we apply the following analogous estimate: for $p \in [1,2]$ 
\begin{align}
\norm{e^{\lambda t} \eta_{\eps,p} \widehat{\omega}_b}_{L^2_t L^p_y} \lesssim \norm{e^{\lambda t} \eta_{\eps,p} \widehat{\omega}_b}_{L^p_y L^2_t} & \\ & \hspace{-5cm}\lesssim \sum_{s \in \set{+,-}}\norm{\eta_{\eps,p} \frac{K_s(\cdot,\cdot + i c_i )}{\brak{\cdot + i c_i}}}_{L^p_y L^2_{c_r}} \norm{\brak{\cdot + i c_i}\Phi_{-s}(\cdot + ic_i)}_{L^\infty_{c_r}} \label{ineq:evdt2}
\end{align}
and for \eqref{ineq:IDv2} we instead use the slightly different
\begin{align}
\norm{e^{\lambda t} \eta_{\eps,p} \widehat{\omega}_b}_{L^2_t L^p_y} \lesssim \sum_{s \in \set{+,-}}\norm{\eta_{\eps,p} \frac{K_s(\cdot,\cdot + i c_i)}{\brak{\cdot + i c_i}}}_{L^p_y L^\infty_{c_r}} \norm{\brak{\cdot + i c_i}\Phi_{-s}(\cdot + ic_i)}_{L^2_{c_r}}. \label{ineq:evdt3}
\end{align}
For \eqref{ineq:IDv3} we have similarly
\begin{align}
\norm{e^{\lambda t} \eta_{\eps,p} \widehat{\omega}_b}_{L^p_y L^2_t} \lesssim \sum_{s \in \set{+,-}}\norm{\eta_{\eps,p} \frac{K_s(\cdot,\cdot + i c_i)}{\brak{\cdot + i c_i}}}_{L^p_y L^2_{c_r}} \norm{\brak{\cdot + i c_i}\Phi_{-s}(\cdot + ic_i)}_{L^\infty_{c_r}}. \label{ineq:evdt4}
\end{align}
Consider next the inviscid damping estimate \eqref{ineq:IDu}, which  does not follow from any of the estimates on the vorticity due to the logarithmic loss in \eqref{ineq:IDv2}.
To correct this shortcoming, we prove below in Section \ref{sec:IDtrick} that 
\begin{align}
\norm{e^{\lambda t} \widehat{u}_b}_{L^2_t L^2_y} \lesssim \sum_{s \in \set{+,-}}\norm{ \frac{K_{s}(\cdot,\cdot + ic_i)}{\brak{\cdot + i c_i}}}_{L^\infty_{c_r} L^1_y} \norm{\brak{\cdot + i c_i} \Phi_{-s}(\cdot + ic_i)}_{L^2_{c_r}}, \label{ineq:evdt5}
\end{align} 
in particular, the regularization in the Green's function for the Biot-Savart law permits us to reverse the order of integration on $K_{\pm}$.
This is sufficient to remove the logarithmic loss.  

\noindent
\textbf{Step 2: Estimates on $K_{\pm}$:} \\ 
The following lemma summarizes the estimates available on $K_{\pm}$. 
These are obtained via Lemma \ref{lem:ReadableEvans} on the Evans function  and asymptotics for the Airy functions comprising the kernels (in Section \ref{sec:Airy}). 
\begin{lemma}[Estimates on $K_{\pm}$]\label{lem:Kpm}
For $p \in [1,\infty)$ and $\eta_{\ep,p}$ defined as in \eqref{def:eta1} and $c_i = -(1-\kappa)\alpha \nu - \delta \eps$, 
\begin{align}
\norm{\eta_{\eps,p} \frac{K_{\pm}(\cdot,\cdot + i c_i)}{\brak{\cdot + i c_i}}}_{L^p_y L^2_{c_r} } & \lesssim e^{-\alpha},  \label{ineq:Kpm1}
\end{align}
and for $p \in [1,2]$ there holds 
\begin{align}
\norm{\eta_{\eps,p} \frac{K_{\pm}(\cdot,\cdot + i c_i)}{\brak{\cdot + i c_i}}}_{L^p_y L^\infty_{c_r}} & \lesssim  \abs{\ln \eps} e^{-\alpha}, \label{ineq:LpLinfK} \\ 
\norm{\frac{K_{\pm}(\cdot,\cdot + i c_i)}{\brak{\cdot + i c_i}}}_{L^\infty_{c_r} L^1_y} & \lesssim  e^{-\alpha}. \label{ineq:LinfL1K}  
\end{align}
\end{lemma}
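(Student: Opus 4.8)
The plan is to estimate the kernels $K_{\pm}$ by combining the lower bounds on the Evans function from Lemma \ref{lem:ReadableEvans} with precise upper bounds on the Airy-type building blocks $H_{\pm}$, $\int_{-1}^1 e^{\pm\alpha w} H_\mp(W)\,dw$, etc.\ that will be developed via the Airy asymptotics in Section \ref{sec:Airy}. Recall from \eqref{omega_b_K+K-} and \eqref{eq:abdef} that $K_{\pm}(y,c)$ has the schematic form
\begin{align}
K_-(y,c) = \frac{\alpha}{2\pi}\,\frac{H_-(Y)}{D(\alpha,c)}\left(\ddd\right), \qquad
K_+(y,c) = \frac{\alpha}{2\pi}\,\frac{H_+(Y)}{D(\alpha,c)}\left(\aaa\right),
\end{align}
so that, after inserting the lower bound \eqref{D_al_c_lower_bound} for $|D|$, the two line-integral factors in the denominator partly cancel the corresponding factors in the numerator, and one is left (for $K_-$, say) with a quantity of size roughly $\alpha\,|H_-(Y)|\,/\,\bigl|\ddd\bigr|$ times the harmless ratio $e^{\alpha}/\langle\alpha\rangle$ coming from \eqref{D_al_c_lower_bound}. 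The factor $e^{-\alpha}$ in all three claimed bounds will come out of this: the free resolvent $\mathcal{R}_f$ is supported (to exponential accuracy) away from the boundary since $\omega_{in}$ is, so $\Phi_{\pm}$ and the associated constants carry a gain $e^{-\alpha(1-\delta_0)}$ relative to the $e^{\alpha}$ denominators; but here this is already packaged into the $\Phi_{\pm}$ side, so on the $K$ side the $e^{\alpha}$ in the lower bound of $|D|$ is exactly what produces the net $e^{-\alpha}$.

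First I would record the pointwise bound on $H_\pm(Y)$ in the Langer variable $Y = (y - c - i\alpha\nu)/\eps$: using $Ai(z)$ asymptotics and the branch-cut convention \eqref{eq:Branch}, $|H_-(Y)| \lesssim \langle Y\rangle^{-1/4}\exp(-\tfrac{2}{3}\mathrm{Re}(Y^{3/2}))$ in the relevant sector, and similarly for $H_+$, with the sign of the exponent arranged so that $H_\mp$ decays as $y$ moves away from the boundary $y = \mp 1$. Dividing by the matching lower bound for $\bigl|\ddd\bigr|$ from Lemma \ref{lem:ReadableEvans} (which is itself of Airy type, governed by the endpoint value $Y|_{y=-1}$, i.e.\ by $\frac{-1-c_r}{\eps}$), the ratio becomes an explicit function of $d(y,\pm1)/\eps$ and $c_r$: near the boundary layer core $d(y,\pm1)\lesssim\eps^{3/2}$ it is $O(\eps^{-1/2})$-ish, in the transition region it decays polynomially, and for $d(y,\pm1)\gtrsim\eps$ it decays like $\exp(-c_\star d(y,\pm1)^{3/2}/\eps^{3/2})$. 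The point of the weight $\eta_{\eps,p}$ in \eqref{def:eta1} is precisely that it is the reciprocal (up to constants) of this decay profile in each of the three regions, so that $\eta_{\eps,p}(y)\,|K_\pm(y,c)/\langle c\rangle|$ is bounded, uniformly in $y$, by a function of $c_r$ alone (essentially $\langle c_r/\eps\rangle^{-3/4}$ times an endpoint Airy factor). I would verify this region-by-region.

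Having reduced to a function of $c_r$, the three estimates \eqref{ineq:Kpm1}, \eqref{ineq:LpLinfK}, \eqref{ineq:LinfL1K} become one-variable statements. For \eqref{ineq:Kpm1} one takes the $L^p_y$ norm first — the integral $\int \eta_{\eps,p}^p |K_\pm|^p\,dy$ is a Gaussian-like / Airy integral in $d(y,\pm1)/\eps$ that converges and scales as $\eps$ times a power, exactly cancelling the $\eps$-powers built into $\eta_{\eps,p}$ (this is where the exponent $\tfrac32 - \tfrac{3}{2p}$ vs.\ $\tfrac12 - \tfrac1p$ in \eqref{def:eta1} is calibrated) — and then the remaining $L^2_{c_r}$ norm of the endpoint factor $\langle c_r/\eps\rangle^{-3/4}$ (times $e^{\alpha}/\langle\alpha\eps\rangle$ from $|D|$, against the $e^{-2\alpha}$-type gain) is finite and $O(e^{-\alpha})$ after a change of variables $c_r \mapsto c_r/\eps$. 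For \eqref{ineq:LpLinfK} one replaces the $L^2_{c_r}$ by $L^\infty_{c_r}$; the supremum over $c_r$ of the $L^p_y$ norm is attained near $c_r = \mp 1$, and there the only new feature is a logarithmic divergence in the integral $\int_{0}^{\eps}(d/\eps)^{-?}\,dd$ at the crossover between the second and third branches of $\eta_{\eps,p}$ — this is the source of the $|\ln\eps|$ factor, and I would isolate it carefully. For \eqref{ineq:LinfL1K} one reverses the order: take $L^1_y$ first (no weight, so it is a clean convergent Airy integral of size $\eps$, which kills the $1/\eps$-type blowups), then $L^\infty_{c_r}$ gives a finite constant times $e^{-\alpha}$ with no log, which is exactly why this estimate — tailored for the Biot--Savart velocity bound \eqref{ineq:evdt5} — avoids the logarithmic loss.

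The main obstacle will be the bookkeeping of the three-region weight against the three-region (plus endpoint) Airy asymptotics: one must keep track of the Langer variable both at the running point $y$ and at the boundary endpoint $y = \mp 1$, handle the branch cut \eqref{eq:Branch} uniformly as $c_r$ crosses $\pm 1$ (where $\mathrm{Re}(Y^{3/2})$ changes character, corresponding to the three cases in Lemma \ref{lem:ReadableEvans}), and confirm that in \emph{every} subcase the weighted kernel is dominated by a single $c_r$-profile that is $L^2_{c_r}$ (for \eqref{ineq:Kpm1}) or whose $L^\infty_{c_r}$ is controlled (for \eqref{ineq:LpLinfK}, \eqref{ineq:LinfL1K}); the delicate point is the transition region $\eps^{3/2}\le d(y,\pm1)\le\eps$, where the $p$-dependent max in \eqref{def:eta1} is designed to interpolate between the two behaviors and where the logarithm in \eqref{ineq:LpLinfK} is born. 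The $p = \infty$ endpoint genuinely diverges (as flagged in the remark after the theorem statement), which confirms that the $|\ln\eps|$ in \eqref{ineq:LpLinfK} is essentially sharp for this method and explains why \eqref{ineq:Kpm1} is stated only for $p < \infty$.
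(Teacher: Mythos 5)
Your outline is the same in spirit as the paper's proof in Section \ref{sec:Ksep} --- Evans-function lower bounds \eqref{D_al_c_lower_bound}, \eqref{H_+_int}, \eqref{H_-_int} plus Airy asymptotics \eqref{estimate_of_H_pm}, \eqref{A_0_lower_bound}, reduction to ratios of the form $|H_\mp(Y)|/\big(\ep e^{\al}\,|A_0(\tfrac{\mp 1-c_r}{\ep}+\delta i)|\big)$, and a three-region analysis in $y$ calibrated to the weight \eqref{def:eta1} --- but two steps in your plan are genuinely broken. The central one is your claim that, after weighting, $\eta_{\ep,p}\,|K_\pm(y,c)|/\brak{c}$ is dominated \emph{uniformly in $y$} by a function of $c_r$ alone. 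If that were true, all three norms would factor, the order of the $y$- and $c_r$-norms would be irrelevant, the $p=\infty$ case of \eqref{ineq:Kpm1} would follow with no loss, and no $\abs{\ln\ep}$ could appear in \eqref{ineq:LpLinfK}; this contradicts both the logarithm you yourself predict and the paper's remark that $p=\infty$ needs an extra logarithmic correction to the weight. In reality the kernel couples $y$ and $c_r$ through factors like $\exp\big(\tfrac{4}{3}\mathrm{Re}\big((Y^{3/2}-(\tfrac{-1-c_r}{\ep}+\delta i)^{3/2})e^{i\pi/4}\big)\big)$, and the substance of the proof is extracting decay from this coupling via the monotonicity inequality \eqref{Com_monotone} and the integrated-Airy gain of Lemma \ref{Gain_in_poly_power}, doing the inner integral first and obtaining $y$-dependent bounds (e.g.\ of size $\ep^{3/2}/(y+1)^{2}$ in the transition region $\ep^{3/2}\leq d(y,\pm1)\leq \ep$) which the weight then renders integrable; see \eqref{T+123T-123} and the estimates of $T_{+,2}$ and $T_{\pm,3}$. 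Relatedly, each of $K_\pm$ consists of \emph{two} terms, \eqref{K_+}--\eqref{K_-}, containing both $H_-(Y)$ and $H_+(Y)$, so the boundary-layer analysis must be run at both endpoints $y=\mp1$ (the second family uses $Ai(y)=\overline{Ai(\overline{y})}$ to land in the correct Airy sector); your one-term schematic also mispairs the coefficient integrals with the Airy factors, which muddies the $e^{-\al}$ bookkeeping (the net $e^{-\al}$ comes from the two $\ep e^{\al}$-sized factors in the lower bound for $|D|$ set against a single $e^{\al}$-sized numerator coefficient).

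The second gap is the order of the mixed norms. In \eqref{ineq:Kpm1} and \eqref{ineq:LpLinfK} the $c_r$-norm is the \emph{inner} one. You propose to take the $L^p_y$ norm first; by Minkowski this controls $\norm{\cdot}_{L^p_yL^2_{c_r}}$ only for $p\geq 2$, and for \eqref{ineq:LpLinfK} the quantity you describe, $\sup_{c_r}\norm{\eta_{\ep,p}K_\pm/\brak{c}}_{L^p_y}$, is the \emph{smaller} norm $L^\infty_{c_r}L^p_y$ and does not bound $\norm{\sup_{c_r}\abs{\cdot}}_{L^p_y}$. Without the (false) uniform-in-$y$ domination there is no way to swap back, so the plan as written does not prove \eqref{ineq:Kpm1} for $p\in[1,2)$ nor \eqref{ineq:LpLinfK}; the paper instead estimates the inner $c_r$-integral (or supremum) pointwise in $y$, region by region. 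Your description of \eqref{ineq:LinfL1K} does have the correct order ($L^1_y$ inside, supremum over $c_r$ outside) and matches Section \ref{sec:IDtrick}. Finally, you never address the high-frequency regime $\al^2\nu\gtrsim 1$: there the contour is shifted to $c_i=-(1-\tfrac{\kappa}{2})\al\nu$, the Langer variable acquires a negative imaginary part so the phase leaves $(0,\pi)$, and one must switch to \eqref{estimate_of_H_pm_high_mode}, \eqref{ineq:powergain_2}, \eqref{Evans_function_lower_bound_2}, with the $\brak{\al\ep}$ factors in the Evans bounds also changing; this case cannot simply be folded into the low-frequency analysis.
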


\noindent
\textbf{Step 2: Estimates on $\Phi_{\pm}$} \\
For obtaining good estimates of \eqref{def:Phi}, we use a variation of an idea appearing in \cite{ChenEtAl18}.
First notice that by the definition of $\mathcal{R}_f$, we have  
\begin{align}
\widehat{\omega}_f=\frac{\alpha}{2\pi}\int_{-\frac{i\lambda}{\al}-\infty}^{-\frac{i\lambda}{\al}+\infty}e^{-i\al c t}\mathcal{R}_f(y,c)dc.
\end{align}
Therefore, by Plancherel's theorem for the inverse Laplace transform, we have
\begin{subequations} \label{ineq:ILTtrick}
\begin{align}
\norm{ \Phi_{\pm}(\cdot+ ic_i) }^2_{L^2_{c_r}} & \lesssim \alpha\int_0^\infty \abs{\int_{-1}^1 e^{\lambda t \pm \alpha y} \widehat{\omega}_f(t,\alpha,y) dy}^2 dt ,\\ 
\norm{ (\cdot + ic_i)\Phi_{\pm}(\cdot+ ic_i) }^2_{L^2_{c_r}} & \lesssim \frac{1}{\alpha}\int_0^\infty \abs{\partial_t \int_{-1}^1 e^{\lambda t \pm \alpha y} \widehat{\omega}_f(t,\alpha,y) dy}^2 dt ,\\ 
\norm{ \partial_c \Phi_{\pm}(\cdot+ i c_i) }^2_{L^2_{c_r}} & \lesssim \alpha \int_0^\infty \brak{\alpha t}^2 \abs{\int_{-1}^1 e^{\lambda t \pm \alpha y} \widehat{\omega}_f(t,\alpha,y) dy}^2 dt.
\end{align}
\end{subequations}
Note that $\norm{\Phi_{\pm}(\cdot + i c_i )}_{L^\infty_{c_r}} \lesssim \norm{ \Phi_{\pm}(\cdot+ ic_i) }_{L^2_{c_r}} + \norm{ \partial_{c} \Phi_{\pm}(\cdot+ ic_i) }_{L^2_{c_r}}$.  

Due to to the integral in $y$, the right-hand side admits optimal estimates via the inviscid damping of $\omega_f$ and the separation from the boundary. 
If $g(t) = \brak{\phi,\omega_f(t)}_{L^2}$ with $\phi \in H^1$, it is straightforward to check the inviscid damping-type estimate $\norm{g}_{L^2_t} \lesssim \abs{\alpha}^{-1/2}\norm{\omega_{in}}_{L^2}$ (i.e. $\omega_{in} \in L^2$ implies $\omega_f \in L^2_t \dot{H}^{1/2}_x H^{-1}_y$); see \cite{LinZeng11} and Section \ref{sec:Free}. 
However, $\mathbf{1}_{y \in [-1,1]} e^{\pm \alpha y}$ is not $H^1$ at the boundary (nor uniformly bounded in $\alpha$).
To deal with this, we use the initial support to deduce that $\omega_f$ stays away from the boundary except for a very small (in $\nu$) contribution due to the viscous diffusion. 
The boundary contribution is wiped out by enhanced dissipation long before it can have a detrimental effect on the desired estimate. 
The approximate separation similarly also implies the exponential gain in $\alpha$, (i.e the uniform real-analyticity in $x$).  
The following lemma is proved in Section~\ref{sec:FreeRes}. 
\begin{lemma}\label{lem:IDtrick}
  If $\exists \delta_0> 0$ such that $\textup{supp}\,\omega_{in} \subset (-1+\delta_0,1-\delta_0)$ and $\lambda$ is as in \eqref{def:lambda}, then $\exists \delta' > 0$ (depending on $\delta_0$, $\kappa$) such that the following holds
\begin{subequations} \label{ineq:L2estFrees} 
\begin{align}
\int_0^\infty \abs{\int_{-1}^1 e^{\lambda t \pm \alpha y} \widehat{\omega}_f(t,\alpha,y) dy}^2 dt & \lesssim e^{2(1-\delta') \alpha} \norm{\widehat{\omega}_{in}}_{L^2}^2, \label{ineq:L2estFree1} \\ 
  \int_0^\infty \abs{\partial_t \int_{-1}^1 e^{\lambda t \pm \alpha y} \widehat{\omega}_f(t,\alpha,y) dy}^2 dt & \lesssim e^{2(1-\delta') \alpha} \norm{\widehat{\omega}_{in}}_{L^2} ^2 ,\label{ineq:L2estFree2}\\
  \int_0^\infty \brak{\alpha t}^2 \abs{\int_{-1}^1 e^{\lambda t \pm \alpha y} \widehat{\omega}_f(t,\alpha,y) dy}^2 dt & \lesssim e^{2(1-\delta') \alpha} \left(\norm{\widehat{\omega}_{in}}_{L^2}^2 + \norm{\partial_y \widehat{\omega}_{in}}_{L^2}^2\right) \label{ineq:L2estFree3} \\
  \int_0^\infty \abs{\brak{\alpha t}^2 \partial_t \int_{-1}^1 e^{\lambda t \pm \alpha y} \widehat{\omega}_f(t,\alpha,y) dy}^2 dt & \lesssim e^{2(1-\delta') \alpha} \left(\norm{\widehat{\omega}_{in}}_{L^2}^2 + \norm{\partial_y \widehat{\omega}_{in}}_{L^2}^2\right). \label{ineq:L2estFree4}
\end{align}
\end{subequations} 
\end{lemma}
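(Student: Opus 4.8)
The plan is to split the weight $\mathbf 1_{[-1,1]}e^{\pm\alpha y}$ into a piece supported on a neighborhood of $\supp\widehat\omega_{in}$ and a piece supported in an $O(\delta_0)$--collar of $\{y=\pm1\}$, and to estimate the two contributions to $h_\pm(t):=\int_{-1}^1 e^{\pm\alpha y}\widehat\omega_f(t,\alpha,y)\,dy$ by two different tools. Fix a cutoff $\chi\in C^\infty_c((-1+\delta_0/4,\,1-\delta_0/4))$ with $\chi\equiv 1$ on $[-1+\delta_0/2,\,1-\delta_0/2]$, so $\chi\equiv1$ on $\supp\widehat\omega_{in}$ while $\supp(\mathbf 1_{[-1,1]}-\chi)$ lies at distance $\ge\delta_0/2$ from $\supp\widehat\omega_{in}$, and write $h_\pm=h_\pm^{\mathrm b}+h_\pm^{\partial}$ with $h_\pm^{\mathrm b}(t)=\brak{\chi e^{\pm\alpha y},\widehat\omega_f(t)}_{L^2_y}$, $h_\pm^\partial(t)=\brak{(\mathbf 1_{[-1,1]}-\chi)e^{\pm\alpha y},\widehat\omega_f(t)}_{L^2_y}$.

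For the \emph{bulk} piece I use the inviscid damping of $\widehat\omega_f$ combined with its enhanced dissipation, as established in Section \ref{sec:Free}: writing $\widehat\omega_f(t,\alpha,\cdot)=e^{-i\alpha ty}v(t,\cdot)$ with $v$ solving the modulated heat equation, the multiplier $\widehat v(t,\eta+\alpha t)$ carries the factor $e^{-\nu\alpha^2 t-\nu\int_0^t(\eta+\alpha r)^2 dr}$ and $\int_0^t(\eta+\alpha r)^2 dr\ge\frac1{12}\alpha^2 t^3$ uniformly in $\eta$, so $e^{\lambda t}$ is swallowed by a fraction of $e^{-\nu\alpha^2 t^3/12}$ (Young's inequality $\delta\nu^{1/3}\alpha^{2/3}t-\frac{1}{24}\nu\alpha^2 t^3\lesssim\delta^{3/2}$) and, for any $\psi\in H^{j+1}_y$, $\norm{\brak{\alpha t}^{j}e^{\lambda t}\brak{\psi,\widehat\omega_f(t)}_{L^2_y}}_{L^2_t}\lesssim_j\alpha^{-1/2}\norm{\psi}_{H^{j+1}_y}\norm{\widehat\omega_{in}}_{H^{j}_y}$ (the factor $\brak{\alpha t}^j\lesssim\brak{\eta}^j\brak{\eta+\alpha t}^j$ distributes one extra derivative onto $\psi$ and $j$ derivatives onto $\widehat\omega_{in}$). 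Since $\norm{\chi e^{\pm\alpha y}}_{H^k_y}\lesssim_{\delta_0,k}\alpha^k e^{(1-\delta_0/4)\alpha}$, the $\alpha$--powers are absorbed into $e^{-\delta'\alpha}$, and taking $j=0$ gives \eqref{ineq:L2estFree1} and $j=1$ gives \eqref{ineq:L2estFree3} for the bulk part.

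For the \emph{boundary} piece I use the Kelvin formula from Section \ref{sec:Free}, which gives $|\widehat\omega_f(t,\alpha,y)|\le e^{-\nu\alpha^2 t-\frac1{12}\nu\alpha^2 t^3}\bigl(G_{\nu t}*|\widehat\omega_{in}(\alpha,\cdot)|\bigr)(y)$ with $G_{\nu t}$ the $1$D heat kernel; estimating the Gaussian tail over the $\delta_0/2$--gap yields $\norm{\mathbf 1_{d(\cdot,\pm1)\le\delta_0/2}\widehat\omega_f(t)}_{L^1_y}\lesssim_{\delta_0}\sqrt{\nu t}\,e^{-\delta_0^2/(64\nu t)}e^{-\nu\alpha^2 t-\frac1{12}\nu\alpha^2 t^3}\norm{\widehat\omega_{in}}_{L^2_y}$, hence $|h_\pm^\partial(t)|\lesssim_{\delta_0}e^{\alpha}\sqrt{\nu t}\,e^{-\delta_0^2/(64\nu t)}e^{-\nu\alpha^2 t-\frac1{12}\nu\alpha^2 t^3}\norm{\widehat\omega_{in}}_{L^2}$. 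Multiplying by $e^{\lambda t}$ replaces $e^{\lambda t}e^{-\nu\alpha^2 t}$ by $e^{-\kappa\nu\alpha^2 t}e^{\delta\nu^{1/3}\alpha^{2/3}t}$, and the rest is elementary: (i) the AM--GM bound $\kappa\nu\alpha^2 t+\tfrac{\delta_0^2}{128\nu t}\ge c\delta_0\sqrt\kappa\,\alpha$, uniform in $\nu,\alpha,t$, produces the gain $e^{-\delta'\alpha}$ with $\delta'\sim\delta_0\sqrt\kappa$ (hence genuinely small); (ii) Young's inequality again eats $e^{\delta\nu^{1/3}\alpha^{2/3}t}$ with a fraction of the enhanced dissipation; (iii) the leftover integral $\int_0^\infty\brak{\alpha t}^{2}\nu t\,e^{-\kappa\nu\alpha^2 t-\delta_0^2/(128\nu t)-\frac1{24}\nu\alpha^2 t^3}dt\lesssim_{\delta_0}1$ uniformly, because $\int_0^\infty\nu t\,e^{-\nu\alpha^2 t^3/24}dt\approx\nu^{1/3}\alpha^{-4/3}\le1$ and the weight $\brak{\alpha t}^2$ is absorbed by the surviving $e^{-\kappa\nu\alpha^2 t}$ when $\nu\alpha^2\gtrsim1$ and by more of $e^{-\nu\alpha^2 t^3}$ when $\nu\alpha^2\lesssim1$. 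This gives $\norm{\brak{\alpha t}^j e^{\lambda t}h_\pm^\partial}_{L^2_t}\lesssim_{\delta_0}e^{(1-\delta')\alpha}\norm{\widehat\omega_{in}}_{L^2}$ for $j\in\{0,1\}$, completing \eqref{ineq:L2estFree1} and \eqref{ineq:L2estFree3}. Finally \eqref{ineq:L2estFree2}, \eqref{ineq:L2estFree4} follow by differentiating through \eqref{def:free}: integration by parts and $\Delta_\alpha e^{\pm\alpha y}=0$ give $\partial_t h_\pm=-i\alpha\brak{\mathbf 1_{[-1,1]}ye^{\pm\alpha y},\widehat\omega_f}_{L^2_y}+\nu\bigl[e^{\pm\alpha y}\partial_y\widehat\omega_f\mp\alpha e^{\pm\alpha y}\widehat\omega_f\bigr]_{y=-1}^{y=1}$, where $ye^{\pm\alpha y}$ obeys the same bounds as $e^{\pm\alpha y}$, the extra $\alpha$ (and $\lambda\lesssim\alpha^2$ from $\partial_t e^{\lambda t}$) is absorbed in the exponent, and the boundary terms, being evaluated exactly at $y=\pm1$ (far from $\supp\widehat\omega_{in}$), are controlled by the same Gaussian--tail argument, the extra $(\nu t)^{-1/2}$ per $\partial_y$ recombining harmlessly against $\nu$ and $e^{-\delta_0^2/(c\nu t)}$.

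The main obstacle is the bookkeeping of the enhanced--dissipation budget $e^{-\frac1{12}\nu\alpha^2 t^3}$ in the boundary estimate, which must simultaneously (a) combine with $e^{-\kappa\nu\alpha^2 t}$ and the spatial Gaussian tail to produce the clean $\alpha$--gain, (b) absorb the parasitic factor $e^{\delta\nu^{1/3}\alpha^{2/3}t}$ inherited from $e^{\lambda t}$, and (c) secure $t$--integrability against the $\brak{\alpha t}$ weights; one must resist extracting the gain $e^{-\delta'\alpha}$ greedily and then being left with a leftover integral whose constant silently diverges as $\nu\to0$ (e.g.\ $\alpha^2\int_0^\infty\nu t^3 e^{-\frac1{12}\nu\alpha^2 t^3}\,dt\approx\nu^{-1/3}\alpha^{-2/3}$). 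The remaining technical point is simply confirming that the constants in the Kelvin pointwise bound and in the combined inviscid--damping / enhanced--dissipation estimate match what Section \ref{sec:Free} provides.
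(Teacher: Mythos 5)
Your proposal is correct and follows the paper's overall strategy --- the same splitting of $\mathbf{1}_{[-1,1]}e^{\pm\alpha y}$ into a compactly supported bulk weight and a boundary collar, with the bulk handled through the inviscid damping/enhanced dissipation of the free evolution (Lemma \ref{lem:FreeDecay} plus the shifted Plancherel computation distributing $\brak{\alpha t}$ between $\brak{\eta}$ and $\brak{\eta-\alpha t}$) and the collar handled through Gaussian separation of supports --- but two of your ingredients genuinely differ. First, for the collar you complete the square in the exact formula \eqref{ineq:ffree}, $\nu\int_0^t(\eta-\alpha s)^2\,ds=\nu t\big(\eta-\tfrac{\alpha t}{2}\big)^2+\tfrac{1}{12}\nu\alpha^2t^3$, exhibiting $\widehat{\omega}_f$ as $e^{-\nu\alpha^2 t-\frac{1}{12}\nu\alpha^2t^3}$ times a modulated Gaussian convolution, so your pointwise bound $\abs{\widehat{\omega}_f(t,\alpha,y)}\le e^{-\nu\alpha^2t-\frac{1}{12}\nu\alpha^2t^3}\,(G_{\nu t}\ast\abs{\widehat{\omega}_{in}})(y)$ carries the spatial localization and the full enhanced dissipation at once; the paper instead only has the comparison bound \eqref{ineq:Comp} (with $e^{-2\nu\alpha^2t}$, no cubic factor) from the subsolution property and recovers the enhanced dissipation by an additional H\"older interpolation against the $L^2_y$ decay. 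Your version is cleaner and slightly stronger, and your AM--GM extraction of $e^{-\delta'\alpha}$ replaces the paper's case split between $\alpha\nu t<\delta_0$ and $\alpha\nu t\ge\delta_0$, with the same effect; note also that the uniform boundedness of your leftover $t$-integral truly relies on the retained factor $e^{-\delta_0^2/(128\nu t)}$ (arbitrary powers of $\nu t$), not on the cubic dissipation alone. Second, and more substantially, for \eqref{ineq:L2estFree2} and \eqref{ineq:L2estFree4} you integrate by parts over all of $[-1,1]$ and use $\Delta_\alpha e^{\pm\alpha y}=0$, so the viscous contribution collapses to the explicit traces $\nu\big[e^{\pm\alpha y}(\partial_y\mp\alpha)\widehat{\omega}_f\big]_{y=-1}^{y=1}$, which your Gaussian kernel bound (and its $y$-derivative, costing only $(\nu t)^{-1/2}$ and a factor $\alpha t$, both harmless against $e^{-c\delta_0^2/(\nu t)}$ and the cubic dissipation) controls directly. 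The paper, having split the weight first, faces a collar viscous term whose weight is not annihilated by $\Delta_\alpha$ and cannot integrate by parts at $y=\pm1$; it therefore proves the localized parabolic-smoothing estimates \eqref{ineq:dyjest} via Duhamel iteration and the separated heat-kernel bound \eqref{ineq:SepSup}. Your route bypasses that lemma entirely, at the price of exploiting the special structure ($e^{\pm\alpha y}\in\ker\Delta_\alpha$ and the exact Gaussian representation), whereas the paper's localized smoothing argument is more robust.

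One caveat: as written, \eqref{ineq:L2estFree4} carries the weight $\brak{\alpha t}^2$, and your $j$-indexed bulk estimate would then demand $\norm{\widehat{\omega}_{in}}_{H^2}$ for the transport term, which the right-hand side does not allow. The paper's own proof of \eqref{ineq:L2estFree4} in fact only treats one power of $\brak{\alpha t}$ (which is also what the Laplace-transform argument around \eqref{ineq:ILTtrick} uses), so you should state explicitly that you prove the $\brak{\alpha t}$-weighted version; with that reading, your argument closes with $H^1$ data exactly as the paper's does.
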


Lemma \ref{lem:IDtrick} together with \eqref{ineq:ILTtrick}, Lemma \ref{lem:Kpm} (and \eqref{ineq:evdt1}, \eqref{ineq:evdt2}, \eqref{ineq:evdt3}, \eqref{ineq:evdt4}, \eqref{ineq:evdt5}) complete the proof of Theorem \ref{thm:Sepbd}. 

\subsection{The proof of Theorem \ref{thm:RegData}: $H^1$ data against the boundary}

This proof proceeds essentially as in Theorem \ref{thm:Sepbd}, however, we replace Lemma \ref{lem:IDtrick} with the following variation. 
The lack of \eqref{ineq:L2estFree3} is precisely why we cannot currently obtain analogues for some of the corresponding estimates in Theorem \ref{thm:Sepbd}. 
The following lemma is proved in Section~\ref{sec:FreeRes}
\begin{lemma}\label{lem:IDtrickReg}
	For all $\omega_{in} \in H^1$ which satisfies the no-slip condition \eqref{eq:bdyc} at $t=0$, there holds 
	\begin{subequations} \label{ineq:L2estFreesReg} 
		\begin{align}
\int_0^\infty \abs{\int_{-1}^1 e^{\lambda t \pm \alpha y} \widehat{\omega}_f(t,\alpha,y) dy}^2 dt & \lesssim \frac{e^{2\al}}{\alpha^2} \left(\norm{\widehat{\omega}_{in}}_{L^2_y}^2 + \norm{\partial_y\widehat{\omega}_{in}}_{L_y^2}^2\right) \label{ineq:L2estFree1Reg}, \\ 
		\int_0^\infty \abs{\partial_t \int_{-1}^1 e^{\lambda t \pm \alpha y} \widehat{\omega}_f(t,\alpha,y) dy}^2 dt & \lesssim e^{2\al}\left(\norm{\alpha \widehat{\omega}_{in}}_{L^2_y}^2 + \norm{\partial_y\widehat{\omega}_{in}}_{L_y^2}^2\right). \label{ineq:L2estFree2Reg} 
		\end{align}
	\end{subequations} 
\end{lemma}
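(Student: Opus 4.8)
The plan is to estimate the scalar quantities $g_\pm(t):=\int_{-1}^1 e^{\lambda t\pm\alpha y}\widehat\omega_f(t,\alpha,y)\,dy=e^{\lambda t}\langle\mathbf{1}_{(-1,1)}e^{\pm\alpha y},\widehat\omega_f(t)\rangle$ directly from the explicit Kelvin solution of the free evolution \eqref{def:free} (Appendix~\ref{sec:Free}):
\[
\widehat\omega_f(t,\alpha,y)=e^{-\nu\alpha^2 t-\frac13\nu\alpha^2 t^3}\,e^{-i\alpha y t}\,\frac{1}{\sqrt{4\pi\nu t}}\int e^{-(y-y'-i\nu\alpha t^2)^2/(4\nu t)}\,\widehat\omega_{in}(\alpha,y')\,dy'.
\]
The difficulty — and the reason both for the $e^{\alpha}$ prefactors and for the absence of analogues of \eqref{ineq:L2estFree3}--\eqref{ineq:L2estFree4} — is that the weight $\mathbf{1}_{(-1,1)}e^{\pm\alpha y}$ is not $H^1$ (it jumps at $y=\pm1$), so the clean $H^1$-pairing form of inviscid damping cannot be applied directly. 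The no-slip condition \eqref{eq:bdyc} only ensures $g_\pm(0)=0$, a consistency check rather than the main mechanism.

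First I would split $\langle\mathbf{1}_{(-1,1)}e^{\pm\alpha y},\widehat\omega_f(t)\rangle=\langle e^{\pm\alpha y},\widehat\omega_f(t)\rangle-\langle\mathbf{1}_{|y|>1}e^{\pm\alpha y},\widehat\omega_f(t)\rangle$. Since $\widehat\omega_{in}$ is supported in $[-1,1]$, the second ``leakage'' term is governed by the Gaussian tails of the Kelvin kernel outside $[-1,1]$; a direct computation gives $|\langle\mathbf{1}_{|y|>1}e^{\pm\alpha y},\widehat\omega_f(t)\rangle|\lesssim\frac{e^{\alpha}}{\alpha}\norm{\widehat\omega_{in}(\alpha,\cdot)}_{L^\infty_y}\min\{\alpha\sqrt{\nu t},1\}\,e^{-c\nu\alpha^2 t^3}$, and using the one-dimensional embedding $\norm{\cdot}_{L^\infty(-1,1)}\lesssim\norm{\cdot}_{H^1(-1,1)}$ together with the fact that $e^{2\lambda t}$ is absorbed by the enhanced dissipation and the $\sqrt{\nu t}$ factor (valid since $\lambda\lesssim\alpha^2\nu+\alpha^{2/3}\nu^{1/3}$, at least in the range $\alpha^{2/3}\nu^{1/3}\lesssim 1$; the complementary range $\alpha\gtrsim\nu^{-1/2}$ needs a separate routine argument), this term contributes at most $\frac{e^{2\alpha}}{\alpha^2}(\norm{\widehat\omega_{in}}_{L^2_y}^2+\norm{\partial_y\widehat\omega_{in}}_{L^2_y}^2)$ to $\int_0^\infty e^{2\lambda t}|\cdot|^2\,dt$.

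For the main term, I would use a Gaussian integration in $y$ against the kernel above to obtain the exact identity $\langle e^{\pm\alpha y},\widehat\omega_f(t)\rangle=e^{-\frac13\nu\alpha^2 t^3}e^{-i\nu\alpha^2 t^2}\,\widehat{v_\pm}(\alpha t)$, where $v_\pm(y):=\mathbf{1}_{(-1,1)}(y)e^{\pm\alpha y}\widehat\omega_{in}(\alpha,y)$ and $\widehat{v_\pm}$ denotes the $y$-Fourier transform (up to a harmless reflection of the argument). Substituting $\xi=\alpha t$ and applying Plancherel converts the weighted $L^2_t$ bound into a pointwise $L^2_\xi$ bound,
\[
\int_0^\infty e^{2\lambda t}\,\abs{\langle e^{\pm\alpha y},\widehat\omega_f(t)\rangle}^2\,dt\le\Big(\sup_{t\ge 0}e^{2\lambda t-\frac23\nu\alpha^2 t^3}\Big)\,\frac1\alpha\,\norm{v_\pm}_{L^2_y}^2,
\]
and the supremum is $O(1)$, with a constant depending only on $\kappa$, in the range $\alpha^{2/3}\nu^{1/3}\lesssim 1$ (again the complementary range is treated separately). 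The $H^1$ hypothesis enters only in the last step: $\norm{v_\pm}_{L^2_y}^2=\int_{-1}^1 e^{\pm2\alpha y}\abs{\widehat\omega_{in}(\alpha,y)}^2\,dy\le\norm{\widehat\omega_{in}(\alpha,\cdot)}_{L^\infty_y}^2\cdot\frac{e^{2\alpha}}{2\alpha}\lesssim\frac{e^{2\alpha}}{\alpha}\big(\norm{\widehat\omega_{in}}_{L^2_y}^2+\norm{\partial_y\widehat\omega_{in}}_{L^2_y}^2\big)$, which supplies both the $e^{2\alpha}$ and the extra $1/\alpha$ in \eqref{ineq:L2estFree1Reg}.

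Finally, \eqref{ineq:L2estFree2Reg} follows by differentiating both identities in $t$: $\partial_t$ hitting $e^{\lambda t}$ gives $\lambda g_\pm$, which is controlled by the first estimate because $\lambda\lesssim\alpha^2$; $\partial_t$ hitting $e^{-\frac13\nu\alpha^2 t^3-i\nu\alpha^2 t^2}$ gives the factor $-\nu\alpha^2 t^2-2i\nu\alpha^2 t$, for which $\sup_t(\nu\alpha^2 t^2+\nu\alpha^2 t)e^{-\frac13\nu\alpha^2 t^3}\lesssim\alpha^2$; and $\partial_t$ hitting $\widehat{v_\pm}(\alpha t)$ gives $\alpha\,\widehat{yv_\pm}(\alpha t)$ with $\norm{yv_\pm}_{L^2_y}\le\norm{v_\pm}_{L^2_y}$. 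Re-running Plancherel bounds each piece by $e^{\alpha}(\alpha\norm{\widehat\omega_{in}}_{L^2_y}+\norm{\partial_y\widehat\omega_{in}}_{L^2_y})$, and the $t$-derivative of the leakage term is handled as the leakage itself, at the cost of one extra power of $\alpha$. The main obstacle throughout is exactly the non-$H^1$ jump of $\mathbf{1}_{(-1,1)}e^{\pm\alpha y}$: it forces the Plancherel-in-$\alpha t$ route (in place of a direct pairing estimate), and since the $\langle\alpha t\rangle^2$-weighted bounds \eqref{ineq:L2estFree3}--\eqref{ineq:L2estFree4} would require two $\xi$-derivatives of $\widehat{v_\pm}$ — i.e.\ $H^2$-type control of $v_\pm$ on $(-1,1)$, which the jump destroys — only the two estimates \eqref{ineq:L2estFreesReg} are available.
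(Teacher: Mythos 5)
Your exact identity $\int_{\mathbb{R}} e^{\pm\alpha y}\widehat\omega_f(t,y)\,dy = e^{-\frac13\nu\alpha^2 t^3}e^{\mp i\nu\alpha^2 t^2}\,\widehat{v_\pm}(\alpha t)$ is correct, and together with Plancherel it gives a clean and genuinely different argument from the paper's (which never extends the $y$-integral to the whole line, but integrates by parts on $[-1,1]$ against the oscillation of the profile $\widehat g_f = e^{i\alpha y t}\widehat\omega_f$, gaining $(\alpha\brak{t})^{-1}$ and using $H^1$ only through $\partial_y\widehat g_f$). However, the regime you defer as a ``separate routine argument'' is a genuine gap, and within your decomposition it cannot be closed. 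The analytic continuation cancels the factor $e^{-\nu\alpha^2 t}$ \emph{exactly}, so against the weight $e^{2\lambda t}$ with $\lambda \supset (1-\kappa)\alpha^2\nu$ the whole-line moment retains only the cubic decay, and $\sup_t e^{2\lambda t - \frac23\nu\alpha^2 t^3}\approx e^{c_\kappa \alpha^2\nu}$, which is not absorbed by $e^{2\alpha}$ once $\alpha^2\nu \gtrsim \alpha$. This is not merely a lossy step: taking $\widehat\omega_{in}(y)=e^{\mp\alpha y}e^{i\xi_0 y}\chi(y)$ with $\chi$ a fixed bump near $y=0$ and $\xi_0\approx \alpha\sqrt{1-\kappa}$ (adjusted to satisfy \eqref{eq:bdyc}), one finds $\int_0^\infty e^{2\lambda t}\abs{\int_{\mathbb{R}}e^{\pm\alpha y}\widehat\omega_f\,dy}^2 dt \gtrsim \alpha^{-1}e^{c_\kappa\alpha^2\nu}$ while the right-hand side of \eqref{ineq:L2estFree1Reg} for this data is only $e^{C\alpha}$, so for $\alpha\nu\gg 1$ your \emph{main term alone} violates the claimed bound. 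Correspondingly, your leakage estimate fails in that regime: completing the square in $e^{\alpha z - z^2/(4\nu t)}$ produces a drift $2\alpha\nu t$, so once $\alpha\nu t\gtrsim 1$ an $O(1)$ fraction of the Gaussian mass lies beyond $y=\pm1$, there is no $\alpha^{-1}\min\{\alpha\sqrt{\nu t},1\}$ gain, and the $e^{-\nu\alpha^2 t}$ is again cancelled by the $e^{\alpha^2\nu t}$ from the exponential weight. In other words, for $\alpha\gtrsim\nu^{-1}$ the two pieces of your split are individually large and must cancel; the triangle inequality destroys exactly the cancellation the estimate relies on, so no ``routine'' treatment of the pieces separately can work there. (Also note that for \eqref{ineq:L2estFree2Reg} your bound $\lambda\lesssim\alpha^2$ is not enough for the $\norm{\partial_y\widehat\omega_{in}}$ contribution: one needs $\lambda\lesssim\alpha$, which again holds only when $\alpha\nu\lesssim1$.)

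The paper's proof is uniform in $\alpha$ precisely because the weight $e^{\pm\alpha y}$ is only ever evaluated on $[-1,1]$: after passing to the profile, the pointwise bound $e^{\lambda t}\abs{\widehat g_f(t,\eta)}\lesssim\abs{\widehat\omega_{in}(\eta)}$ uses the retained $e^{-\nu\alpha^2 t}$ to swallow $e^{(1-\kappa)\alpha^2\nu t}$ at every frequency, while the boundary terms produced by the integration by parts are controlled by Sobolev embedding and the factor $\abs{\pm\alpha - i\alpha t}^{-1}\approx(\alpha\brak t)^{-1}$ supplies the $\alpha^{-2}$ in \eqref{ineq:L2estFree1Reg}; the time-derivative estimate \eqref{ineq:L2estFree2Reg} is then handled with maximal regularity on $t\le1$ and the explicit Fourier formula for $t\ge 1$. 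To salvage your route you would have to keep the truncation to $[-1,1]$ inside the exact computation (a truncated, contour-shifted Fourier transform), which reintroduces precisely the boundary contributions your decomposition was designed to avoid. As written, the proposal establishes the lemma only in the range $\alpha\lesssim\nu^{-1/2}$.
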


\subsection{The proof of Theorem \ref{thm:NonSepbd}: $L^2$ data against the boundary} \label{sec:L2OrrSom}
In the $L^2$ case, the first few steps are identical to Theorems \ref{thm:Sepbd} and \ref{thm:RegData}. In the case of rough data near the boundary however, the estimates on $\Phi_{\pm}$ seem much less advantageous. Instead, we take an approach more like that of e.g. \cite{GN17} and the references therein and expand the full Green's function of the Orr-Sommerfeld equation explicitly. 
To this end, we apply the Green's function of the Orr-Sommerfeld equation on the whole real line $\rr$ to represent $\mathcal{R}_f$ in terms of $\omega_{in}$ (see e.g. \cite{Wasow53,DR81}).
Applying Fubini's theorem gives an expansion of the form
\begin{align}\label{om_b}
\widehat{\omega}_b(\alpha,y) =& - \int_{\Gamma_\al} \frac{ 2\epsilon^{-2}}{\pi D(\al,c)}\aaa \int_{-1}^1 \int_{-1}^{r}  e^{-i\al c t+\al r}\widehat{\omega}_{in}(\alpha, z) H_+(Z)H_-(R)H_-(Y)dz dr dc\nonumber\\
&+\text{Similar terms}\nonumber\\
=&: -\sum_{j=1}^8\int_{-1}^1 K_j(y,z)\widehat{\omega}_{in}(\alpha, z)dz,
\end{align}
where $\Gamma_\al$ is a suitably chosen contour in the complex plane. The full representation of $\omega_b$ can be found in Appendix \ref{sec:Green}.
Note that the free resolvent introduced additional powers of $\eps^{-2}$ via the Wronksian of $H_{\pm}$; see e.g. \cite{Wasow53,DR81}. 

\begin{figure}\centering
\includegraphics[scale=0.4]{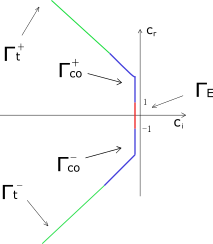} \label{Fig:Contour}
\caption{Contour $\Gamma_\al$ (rotated in the complex plane to match the usual spectral conventions).} 
\end{figure}
To prove estimate \eqref{ineq:thm2}, we show that the operators corresponding to the kernels in the weighted Green's function, $\eta_\ep(y)K_j(y,z)$ as defined in \eqref{om_b} are bounded integral operators $L^p$ to $L^p$ via Schur's test. Specifically, we prove (for $t \geq 1$) 
\begin{subequations}\label{ineq:GenC}
\begin{align}
\sum_{j=1}^8\sup_{y\in[-1,1]}||\eta_\ep(y) K_j(y,\cdot)||_{L_z^1} & \lesssim \lan \al\ep\ran \ln\ep^{-1} e^{-\lambda t},\label{K_1_sup_x_L_1_z_weighted}\\
\sum_{j=1}^8\sup_{z\in[-1,1]}||\eta_\ep(\cdot)K_j(\cdot,z)||_{L_y^1}& \lesssim \lan\al\ep \ran\ln^ {3/2}\ep^{-1}  \ln \ln \ep^{-1} e^{-\lambda t}.\label{K_1_sup_z_L_1_x_weighted}
\end{align}
\end{subequations}
Schur's test and \eqref{om_b} then yield \eqref{ineq:thm2}. Note that the $\lan \al\ep\ran$ factor is dominated by $e^{-\lambda t}$ for $t\geq 1$. Analogous estimates are also made for $t \leq 1$ (see Section \ref{sec:Gen2} for details). 

We decompose the contour integral into two parts: the dominant contribution is close to the continuous spectrum of the linearized Euler equation and the second part is a tail contribution bent into a standard trapezoidal shape. See Figure \ref{Fig:Contour} for a diagram.
The inner contour $\Gamma_E$ is chosen to be the vertical line segment $\{c|c_r\in[-1,1], c_i=-\al\nu-\delta\al^{-1/3}\nu^{1/3}\}$. 
Further from this contribution, we distinguish two cases depending on the distance to the boundary: $d(y,\pm 1)\geq\ep^{3/2}$ and $d(y,\pm 1)\leq \ep^{3/2}$.
In the first case, we use the straight contour $\{c|c_i\equiv -\al \nu-\delta \al^{-1/3}\nu^{1/3}$. 
In the second  case, we further split the contour into two pieces, the connection region $\Gamma_{co}^\pm$ and tail region $\Gamma_t^\pm$.
The straight contour and the regions $\Gamma_{co}^{\pm}$ are treated by a variation of the method used near the continuous spectrum. In the tail region, we use the extra integrability coming from integrating $e^{- \al c_i t}$ in $c_i$ for $t\geq 1$. For $t < 1$, the vanishing of the weight at the boundary allows to minimize the effect of this region for short times. 
See Sections \ref{sec:Gen1} and \ref{sec:Gen2} for details.

\section{Separated from boundary or $H^1$: Estimates on the free resolvent} \label{sec:FreeRes}
\subsection{Proof of Lemma \ref{lem:IDtrick}} 
In this section we prove Lemma \ref{lem:IDtrick}.
Let $\psi_{\pm}$ be a smooth function such that
\begin{align}
\psi_{\pm} =
\begin{cases}
e^{\pm \alpha y} & \quad y \in [-1+\delta_0/8,1-\delta_0/8] \\
0 & \quad y \not\in [-1+\delta_0/16,1-\delta_0/16]. 
\end{cases}
\end{align}
Note  $\forall \delta'$ sufficiently small (depending on $\delta_0$), there holds
\begin{align}
\norm{\partial_y^j \psi_{\pm}}_{L^2} \lesssim_{j,\delta'} e^{(1-\delta')\alpha}. \label{ineq:trivPsi}
\end{align}
Denote
\begin{align}
\widetilde\psi_{\pm} = \psi_{\pm} - e^{\pm \alpha y}\mathbf{1}_{y \in [-1,1]}, 
\end{align}
and note that $\widetilde \psi_{\pm}$ is supported in the complement of the interval $I_{\delta_0} := [-1+\frac{\delta_0}{2}, 1 - \frac{\delta_0}{2}]$.
\textbf{Proof of \eqref{ineq:L2estFree1}:} \\
First, we divide the integral based on the support of the initial data (suppressing the $\alpha$ dependence) 
\begin{align}
\int_{-1}^1 e^{\lambda t} e^{\pm \alpha y} \widehat{\omega}_f(t,y) dy & = \int_{\rr} e^{\lambda t} \psi_{\pm} \widehat{\omega}_f(t,y) dy - \int_{-1}^1 e^{\lambda t} \left(\psi_{\pm} - e^{\pm \alpha y}\mathbf{1}_{y \in [-1,1]}\right) \widehat{\omega}_f(t,y) dy \nonumber \\
& = T_1(t) + T_2(t). \label{ineq:L2Free1_decomp}
\end{align}
By Lemma \ref{lem:FreeDecay} and \eqref{ineq:trivPsi}, for $\delta'$ sufficiently small
\begin{align}
\norm{T_1}_{L^2_t} & \leq \norm{\psi}_{H^1} \norm{e^{\lambda t} \omega_f}_{L^2_t H_y^{-1}} \lesssim e^{(1-\delta')\alpha } \norm{\omega_{in}}_{L^2},
\end{align}
which suffices for the treatment of $T_1$ in \eqref{ineq:L2Free1_decomp}.  

Turn next to $T_2$ in \eqref{ineq:L2Free1_decomp}, where we use that the vorticity is initially zero on the support of the integrand.
First, we observe that $\abs{\omega_f}^2$ is a subsolution of the heat equation,
\begin{align}
\left(\partial_t + 2\nu \alpha^2 - \nu \partial_{yy} \right) \abs{\omega_f}^2 < 0, \label{ineq:subsoln}
\end{align}
which implies by the parabolic comparison principle 
\begin{align}
\abs{\omega_f(t,y)}^2 \leq \frac{1}{(4\pi \nu t)^{1/2}} \int_{\Real} \exp \left(-2\nu \alpha^2 t - \frac{\abs{y - y'}^2}{4\nu t}\right) \abs{\omega_{in}(y')}^2 dy'. \label{ineq:Comp}
\end{align}
By H\"older's inequality and parabolic comparison \eqref{ineq:Comp} we have, 
\begin{align*}
\abs{T_2(t)} 
& \lesssim \abs{\int_{I_{\delta^c_0}} e^{4\lambda t} \abs{\widetilde{\psi}_{\pm}(y)}^4 \abs{\omega_f(t,y)}^2 \frac{1}{(4\pi \nu t)^{1/2}} \int_{\Real} \exp \left(-2\alpha^2\nu  t - \frac{\abs{y - y'}^2}{4\nu t}\right) \abs{\omega_{in}(y')}^2 dy' dy }^{1/4}. 
\end{align*}
Using that $\abs{y-y'} \gtrsim \delta_0$ on the support of the integrand we obtain for some $\delta' > 0$
\begin{align*}
|T_2| & \lesssim_{\delta_0} e^{\alpha - \kappa \alpha^2 \nu t}  e^{-\frac{\delta_0^2}{256\nu t}}  \norm{\widehat{\omega}_{in}}_{L_y^2}^{1/2} \norm{e^{\alpha^2 \nu t + 2\delta \nu^{1/3}\alpha^{2/3}t} \widehat{\omega}_f}_{L_y^2}^{1/2} \\
& \lesssim_{\delta_0} {e^{(1-\delta')\al}}e^{-\frac{\delta_0^2}{600\nu t}} \norm{\omega_{in}}_{L_y^2}^{1/2} \norm{e^{\alpha^2 \nu t + 2\delta \nu^{1/3}\alpha^{2/3}t} \omega_f}_{L_y^2}^{1/2}, 
\end{align*}
where the last line followed  by separately considering the cases $\alpha \nu t < \delta_0$ and $\alpha \nu t \geq \delta_0$. 
Next, we estimate the ${L_t^2}$ norm of $T_2$ in \eqref{ineq:L2Free1_decomp}. 
To this end, we decompose the integral into two regimes: $t\in[0,\nu^{-1/2}]$ and $t\in[\nu^{-1/2}, \infty)$. For $t \in [0,\nu^{-1/2})$, we use that for any $p,b >0 $,  $e^{-\frac{b}{\nu t}}\lesssim_{b,p} \nu^p$. Therefore, by Lemma \ref{lem:FreeDecay} we have the following $\forall p > 0$, 
\begin{align}
\norm{T_2}_{L^2_t}^2 
& \lesssim e^{2(1-\delta')\al}\left(\int_0^{\nu^{-1/2}} + \int_{\nu^{-1/2}}^\infty\right) e^{-\frac{\delta_0^2}{300\nu t}} e^{- \frac{1}{100} \nu \alpha^{2} t^3} \norm{\omega_{in}}_{L_y^2}^2 dt  \nonumber\\
& \lesssim_p {e^{2(1-\delta')\al}}\nu^p \norm{\omega_{in}}_{L_y^2}^2. \label{T_2_L_t2}
\end{align}
Note we also used that for all $\gamma > 0$, $\exists C_\gamma > 0$ such that $\nu^{1/3} \alpha^{2/3} t \leq C_\gamma + \gamma \nu \alpha^2 t^3$. 
This completes the proof of \eqref{ineq:L2estFree1}. 

\textbf{Proof of \eqref{ineq:L2estFree2}:}\\ 
The time derivative estimate is slightly more complicated than \eqref{ineq:L2estFree1}. We begin as before, 
\begin{align}
\frac{d}{dt} &\int_{-1}^1 e^{\lambda t \pm \alpha y} \widehat{\omega}_f(t,y) dy\nonumber\\
& = -\int_{\Real} e^{\lambda t} \psi_\pm i\alpha y \widehat{\omega}_f(t,y) dy +\int_{\Real} e^{\lambda t} \psi_\pm \nu \Delta_\alpha \widehat{\omega}_f(t,y) dy -\int_{-1}^1 e^{\lambda t} \widetilde{\psi}_\pm i\al y\widehat{\omega}_f(t,y) dy  \nonumber\\
& \quad+\int_{-1}^1 e^{\lambda t} \widetilde{\psi}_\pm \nu \Delta_\alpha \widehat{\omega}_f(t,y)+\lambda\int_{-1}^1 e^{\lambda t \pm \alpha y} \widehat{\omega}_f(t,y) dy  dy \nonumber\\
& = :\sum_{j=1}^5 T_j(t).\label{T_j} 
\end{align}
For the $T_5$ term, we note that it is $(1-\kappa)\al^2\nu+\delta \al^{2/3}\nu^{1/3}$ times the integral in \eqref{ineq:L2Free1_decomp}, so by adjusting $\delta'$, $||T_5||_{L^2_t}\lesssim e^{(1-\delta')\al}||\omega_{in}||_2$.
Terms $T_1$ and $T_3$ in \eqref{T_j} are treated as in \eqref{ineq:L2Free1_decomp} in the proof of \eqref{ineq:L2estFree1}. 
Next, consider the treatment of the terms $T_2$ and $T_4$.
For $T_2$ we first integrate by parts in $y$ and then we may treat as in the proof of \eqref{ineq:L2estFree1}. Consider next $T_4$ in \eqref{T_j}. We cannot integrate by parts in $y$ due to the boundary.
Instead we need to use parabolic smoothing to estimate $\Delta_\alpha \widehat{\omega}_f$ and use the boundary separation to avoid losing regularity.
We prove that for $0 < \delta_j = \delta_0 2^{-j}$ and $I_{\delta_j} = [-1 + \delta_j/2,1 - \delta_j/2]$, 
\begin{align}
e^{\lambda t}\norm{\mathbf{1}_{I_{\delta_j}^c} \partial_{y}^j \widehat{\omega}_f(t)}_{L_y^2}  \lesssim  e^{-\kappa\alpha^2 \nu t} e^{-\frac{\delta_j^2}{600^j \nu t}} e^{-\frac{1}{100} \nu \alpha^{2} t^3} \norm{\omega_{in}}_{L^2_y}. \label{ineq:dyjest}
\end{align}
Inequality \eqref{ineq:dyjest} is a consequence of the following general estimate which follows from the heat kernel: for all $\gamma > 0$, 
\begin{align}
\norm{\mathbf{1}_{I_{\gamma/2}^c} \partial_y e^{\nu t \Delta_\alpha} \mathbf{1}_{I_{\gamma}} f}_{L^2} \lesssim  e^{-\frac{\gamma^2}{100\nu t} - \nu \alpha^2 t } \norm{f}_{L^2_y}. \label{ineq:SepSup}
\end{align}
and Duhamel's formula, 
\begin{align}
\widehat{\omega}_f(t) = e^{\nu t \Delta_\alpha/2}\widehat{\omega}_f(t/2) - i \alpha \int_{t/2}^t e^{\nu(t-\tau)\Delta_\alpha }  \left( y \widehat{\omega}_f(\tau) \right) d\tau. 
\end{align}
Indeed, consider
\begin{align*}
\mathbf{1}_{I_{\delta_1}^c} \partial_y\widehat{\omega}_f(t) & =  \mathbf{1}_{I_{\delta_1}^c} \partial_y e^{\nu t \Delta_\alpha/2}\left( (\mathbf{1}_{I_{\delta_0}^c} + \mathbf{1}_{I_{\delta_0}})\widehat{\omega}_f(t/2)\right) \\
& \quad - i \alpha \int_{t/2}^t \mathbf{1}_{I_{\delta_1}^c} \partial_y e^{\nu(t-\tau)\Delta_\alpha }  \left( (\mathbf{1}_{I_{\delta_0}^c} + \mathbf{1}_{I_{\delta_0}}) y\widehat{\omega}_f(\tau) \right) d\tau \\
& = I_{c} + I_{o} + II_{c} + II_{o}. 
\end{align*}
Analogous to the proof of \eqref{ineq:L2estFree1}, estimates on $I_o$ follow from \eqref{ineq:SepSup} and estimates on $I_c$ follow by the comparison principle \eqref{ineq:Comp}.
These imply
\begin{align}
e^{2 \lambda t}\norm{I_{c} + I_{o}}_{L_y^2}^2 \lesssim_{\delta_0} e^{\alpha -\frac{\delta^2_0}{400 \nu t} - \kappa \nu \alpha^2 t } \norm{\widehat{\omega}_{in}}_{L_y^2} \norm{e^{\frac{1}{2}\alpha^2 \nu t - 2\delta \nu^{1/3} \alpha^{2/3} t} \widehat{\omega}_{f}\left(\frac{t}{2}\right)}_{L_y^2}. 
\end{align}
This is consistent with \eqref{ineq:dyjest} as in the proof of \eqref{T_2_L_t2} above by the exact formula in Appendix \ref{sec:Free}.  
Applying the same arguments for each $\tau$ in the integral in $II_{c}$ and $II_{o}$ yields \eqref{ineq:dyjest} for $j=1$.
The case $j>1$ follows by iteration. 

With \eqref{ineq:dyjest} proved, now consider the estimate of $T_4$ in \eqref{T_j}.
We apply an argument similar to that used to treat \eqref{T_2_L_t2}.
By \eqref{ineq:dyjest} (and again applying estimates based on \eqref{ineq:Comp} for the first term), 
\begin{align}
\abs{T_4} 
& \lesssim e^{\alpha} e^{\lambda t} \nu \left(\alpha^2 \norm{ \mathbf{1}_{I_{\delta_2}^c} \widehat{\omega}_f}_{L^2} +  \norm{\mathbf{1}_{I_{\delta_2}^c} \partial_{yy} \widehat{\omega}_f}_{L^2} \right)\nonumber \\ 
& \lesssim \nu e^{(1-\delta')\alpha} e^{-\frac{\delta_2}{600^2 \nu t}} e^{-\frac{1}{100} \nu \alpha^{2} t^3} \left(1 +  \alpha^2\right) \norm{\omega_{in}}_{L^2_y}. \label{ineq:dyjest2}
\end{align}
Therefore, similar to the argument in \eqref{T_2_L_t2} we obtain
\begin{align*}
\norm{T_4}_{L^2_t}^2 & \lesssim \nu e^{2(1-\delta') \al}\norm{\widehat{\omega}_{in}}_{L_y^2}^2.
\end{align*}
This completes the proof of \eqref{ineq:L2estFree2}.

\textbf{Proof of \eqref{ineq:L2estFree3}:}\\
We proceed as in the proof of \eqref{ineq:L2estFree1},
\begin{align}
\int_{-1}^1 \brak{\alpha t} e^{\lambda t} e^{\pm \alpha y} \widehat{\omega}_f(t,y) dy & = \int_{\rr}e^{\lambda t} \brak{\alpha t} \psi_{\pm} \widehat{\omega}_f(t,y) dy - \int_{-1}^1 e^{\lambda t} \brak{\alpha t} \widetilde{\psi}_{\pm} \widehat{\omega}_f(t,y) dy \nonumber \\
& = T_1(t) + T_2(t). \label{ineq:L2Free1_decomp3}
\end{align}
The term $T_2$ can be treated as in \eqref{ineq:L2estFree1}, as the separation of support gains arbitrary powers of $t$.
Consider next $T_1$. By Fourier transforming in $\Real$ (denoting $y \mapsto \eta$), Plancherel's theorem, shifting the integral in $\eta$, 
\begin{align*}
\norm{T_1}_{L^2_t} & = \left(\int_0^\infty \abs{ \int_{-\infty}^\infty  e^{\lambda t} \brak{\alpha t} \widehat{\psi_{\pm}}(\eta) \widehat{\omega}_f(t,\eta) d\eta }^2 dt \right)^{1/2} \\
& \lesssim \norm{\psi_{\pm}}_{H^2} \left( \int_0^\infty \int_{-\infty}^\infty  e^{2\lambda t} \frac{\brak{\alpha t}^2}{ \brak{\eta}^{4}}  \abs{\widehat{\omega}_f(t,\eta)}^2 d\eta dt \right)^{1/2} \\
& \lesssim_{\delta_0} e^{(1-\delta')\alpha} \left(\int_0^\infty \int_{-\infty}^\infty  \frac{\brak{\alpha t}^2}{\brak{\eta}^2 \brak{\eta-\alpha t}^{4}}  e^{2\lambda t} \brak{\eta}^2\abs{\widehat{\omega}_f(t,\eta-\alpha t)}^2 d\eta dt \right)^{1/2}.
\end{align*}
Therefore, using the exact formula for the free evolution, (see \eqref{ineq:ffree}), 
\begin{align*}
\norm{T_1}_{L^2_t} & \lesssim_{\delta_0} e^{(1-\delta')\alpha} \left(\int_{-\infty}^\infty \left(\int_0^\infty\frac{\brak{\alpha t}^2}{\brak{\eta}^2 \brak{\eta-\alpha t}^{4}}  dt\right)  \brak{\eta}^2\abs{\omega_{in}(\eta)}^2 d\eta \right)^{1/2} \\
& \lesssim_{\delta_0} e^{(1-\delta')\alpha} \norm{\omega_{in}}_{H^1}. 
\end{align*}

\textbf{Proof of \eqref{ineq:L2estFree4}:}\\
We proceed as in the previous estimates,
\begin{align}
  \brak{\alpha t}  \partial_t \int_{-1}^1 e^{\lambda t \pm \alpha y} \widehat{\omega}_f(t,\alpha,y) dy & = -\brak{\alpha t} \int_{\Real} e^{\lambda t} \psi_\pm i\alpha y \widehat{\omega}_f(t,y) dy +\brak{\alpha t} \int_{\Real} e^{\lambda t} \psi_\pm \nu \Delta_\alpha \widehat{\omega}_f(t,y) dy \nonumber \\ & \quad  - \brak{\alpha t}\int_{-1}^1 e^{\lambda t} \widetilde{\psi}_\pm i\al y\widehat{\omega}_f(t,y) dy  \nonumber\\
& \quad+\brak{\alpha t}\int_{-1}^1 e^{\lambda t} \widetilde{\psi}_\pm \nu \Delta_\alpha \widehat{\omega}_f(t,y)+\lambda\brak{\alpha t} \int_{-1}^1 e^{\lambda t \pm \alpha y} \widehat{\omega}_f(t,y) dy  dy \nonumber\\
& = :\sum_{j=1}^5 T_j(t).
\end{align}
The treatment of $T_1$ and $T_5$ are essentially the same as \eqref{ineq:L2estFree3} (after adjusting $\delta'$). The treatment of $T_3$ and $T_4$ follow as in \eqref{ineq:L2estFree2} as the separation from the boundary gains arbitrary powers of $t$. 
The term $T_2$ follows as in \eqref{ineq:L2estFree3} after integrating by parts in $y$ (and adjusting $\delta'$). 
We omit the repetitive details for brevity. 

\subsection{Proof of Lemma \ref{lem:IDtrickReg}}
\textbf{Proof of \eqref{ineq:L2estFree1Reg}:} \\ 
Denote the \emph{profile} $\widehat{g}_f = e^{i\alpha y t} \widehat{\omega}_f(t,\alpha,y)$, which satisfies
\begin{align}
\partial_t \widehat{g}_f = \nu \left( (\partial_{yy} - i\alpha t)^2 -  \alpha^2 \right) \widehat{g}_f =: \nu \Delta_{\alpha,t} \widehat{g}_f. \label{def:Datg}
\end{align}
To see \eqref{ineq:L2estFree1Reg}, first notice that 
\begin{align}
\int_{-1}^1 e^{\lambda t \pm \alpha y} \widehat{\omega}_f(t,\alpha,y) dy = \int_{-1}^1 e^{\lambda t \pm \alpha y - i\alpha y t} \widehat{g}_f(t,\alpha,y) dy, 
\end{align}
is suitably bounded for $t \lesssim 1$, so without loss of generality, assume $t > 1$. 
Integrating by parts in $y$
\begin{align*}
\int_{-1}^1 e^{\lambda t \pm \alpha y - i\alpha y t} \widehat{g}_f(t,\alpha,y) dy & = \frac{e^{\lambda t \pm \alpha y - i \alpha y t}}{(\pm\alpha-i\alpha t)} \widehat{g}_f(t,\alpha,y)\bigg|_{y = \pm 1} \\ & \quad - \frac{1}{(\pm \alpha - i\alpha t)}\int_{-1}^1 e^{\lambda t \pm \alpha y - i\alpha y t} \partial_y \widehat{g}_f(t,\alpha,y) dy. 
\end{align*}
We therefore have by Sobolev embedding, 
\begin{align}
\abs{\int_{-1}^1 e^{\lambda t \pm \alpha y} \widehat{\omega}_f(t,\alpha,y) dy} \lesssim \frac{e^{\alpha}}{\brak{\alpha t}} \left(\norm{e^{\lambda t}\widehat{g}_f(t,\alpha,\cdot)}_{L^2_y} + \norm{e^{\lambda t}\partial_y \widehat{g}_f(t,\alpha,\cdot)}_{L^2_y} \right)
\end{align}
By the exact formula from Appendix \ref{sec:Free} we see $\norm{e^{\lambda t}\partial_y \widehat{g}_f(t,\alpha,\cdot)}_{L^2_y} \lesssim \norm{\partial_y\widehat{\omega}_{in}(\alpha,\cdot)}_{L^2_y}$, which implies \eqref{ineq:L2estFree1Reg}.

\textbf{Proof of \eqref{ineq:L2estFree2Reg}:} \\ 
The proof of \eqref{ineq:L2estFree2Reg} is a little more complicated. We have
\begin{align}
\frac{d}{dt}\int_{-1}^1 e^{\lambda t \pm \alpha y} \widehat{\omega}_f(t,\alpha,y) dy&  = \lambda \int_{-1}^1 e^{\lambda t \pm \alpha y - i\alpha y t} \widehat{g}_f(t,\alpha,y) dy \nonumber \\
& \quad - \int_{-1}^1 e^{\lambda t \pm \alpha y} (i\alpha y) \widehat{\omega}_f(t,\alpha,y) dy - \int_{-1}^1 e^{\lambda t \pm \alpha y} \nu\Delta_\alpha \widehat{\omega}_f(t,\alpha,y) dy.\label{eq:pelican} 
\end{align}
The second term is treated as in \eqref{ineq:L2estFree1Reg}; indeed note that note that for $t > 1$ we have 
\begin{align}
 \abs{\int_{-1}^1 e^{\lambda t \pm \alpha y} (i\alpha y) \widehat{\omega}_f(t,\alpha,y) dy} \lesssim \frac{e^{\alpha}}{\brak{t}} \left(\norm{\widehat{\omega}_{in}}_{L^2} + \norm{\partial_y \widehat{\omega}_{in}}_{L^2} \right). 
\end{align}
To treat the first term in \eqref{eq:pelican}, first note that by maximal regularity, (note $\alpha^{2/3}\nu^{1/3} \leq C_\kappa + \frac{\kappa}{4} \alpha^2 \nu$) 
\begin{align}
\int_0^1 \abs{\lambda \int_{-1}^1 e^{\lambda t \pm \alpha y} \widehat{\omega}_f(t,\alpha,y) dy}^2 dt  & \lesssim_\kappa \int_0^1 \abs{ \left(1 + (1-\frac{1}{2}\kappa)\alpha^2\nu\right) \int_{-1}^1 e^{\lambda t \pm \alpha y} \widehat{\omega}_f(t,\alpha,y) dy}^2 dt \\ & \lesssim  e^{\alpha} \left(\norm{\alpha \widehat{\omega}_{in}}_{L^2}^2  + \norm{\partial_y\widehat{\omega}_{in}}_{L^2}^2 \right). 
\end{align}
For $t > 1$, we can treat the first term as in \eqref{ineq:L2estFree1Reg} above (using the decay and parabolic regularization in Appendix \ref{sec:Free} to absorb the $\lambda$ in front). 

Finally, consider the last term in \eqref{eq:pelican}. 
By $\kappa > 0$ and maximal regularity we have
\begin{align*}
\int_{0}^1 \abs{\int_{-1}^1 e^{\lambda t \pm \alpha y} \nu\Delta_\alpha \widehat{\omega}_f(t,\alpha,y) dy}^2 dt \lesssim e^{\alpha} \nu \norm{e^{\lambda t} \Delta_\alpha \widehat{\omega}_f}_{L^2_t(0,1;L^2_y)}^2 \lesssim e^{\alpha} \left(\norm{\alpha \widehat{\omega}_{in}}_{L^2}^2  + \norm{\partial_y\widehat{\omega}_{in}}_{L^2}^2 \right). 
\end{align*}
Consider then $t > 1$ and recall \eqref{def:Datg}, 
\begin{align}
\int_{-1}^1 e^{\lambda t \pm \alpha y} \nu\Delta_\alpha \widehat{\omega}_f(t,\alpha,y) dy= \int_{-1}^1 e^{\lambda t \pm \alpha y - i\alpha yt}  \nu \Delta_{\alpha,t} \widehat{g}_f dy. \label{eq:gomf}
\end{align}
By the exact formula in Appendix \ref{sec:Free} and applying the Fourier transform in both variables: for some $\delta'' > 0$ there holds 
\begin{align}
e^{\lambda t} \nu \left( \abs{\eta-\alpha t}^2 + \alpha^2  \right) \widehat{g}_f(t,\alpha,\eta) & \leq \nu \left( \abs{\eta-\alpha t}^2 + \alpha^2  \right)\exp\left(- \delta'' \nu \alpha^2 t^3 - \delta''\nu \eta^2 t \right) \abs{\widehat{\omega}_{in}(\alpha,\eta)} \nonumber\\
& \lesssim \frac{\eta^2 + \alpha^2 t^2+\al^2}{\eta^2 t + \alpha^2 t^3} \abs{\widehat{\omega}_{in}(\alpha,\eta)}, 
\end{align}
and hence for $t > 1$ we have by \eqref{eq:gomf}, 
\begin{align}
\abs{\int_{-1}^1 e^{\lambda t \pm \alpha y} \nu\Delta_\alpha \widehat{\omega}_f(t,\alpha,y) dy} \lesssim \frac{e^\alpha}{t} \norm{\omega_{in}}_{L^2}, 
\end{align}
which completes the proof of \eqref{ineq:L2estFree2Reg}.

\section{Separated from boundary or $H^1$: Estimates on the kernels} \label{sec:Ksep} 
In this section, we prove Lemma \ref{lem:Kpm}. 
Recall from \eqref{omega_b_K+K-} the kernels 
\begin{align}
K_+(Y,c)=&-\frac{1}{D(\al,c)}\left(\int_{-1}^1 e^{-\al w}H_+(W)dw\right)H_-(Y)+\frac{1}{D(\al,c)}\left(\int_{-1}^1 e^{-\al w}H_-(W)dw\right)H_+(Y)\nonumber\\
=&K_{+,1}(Y,c)+K_{+,2}(Y,c)\label{K_+}\\
K_-(Y,c)=&\frac{1}{D(\al,c)}\left(\int_{-1}^1e^{\al w}H_+(W)dw\right)H_-(Y)-\frac{1}{D(\al,c )}\left(\int_{-1}^1e^{\al w}H_-(W)d w\right)H_+(Y)\nonumber\\
=&K_{-,1}(Y,c)+K_{-,2}(Y,c).\label{K_-}
\end{align} 
Recall also the weight defined in \eqref{def:eta1}. 

\subsection{Proof of estimate \eqref{ineq:Kpm1}} \label{sec:kpm1pf}
We will separate low and high frequencies.
Define $C_0 = \left(\frac{2 \delta}{\kappa}\right)^{3/4}$; the low frequency case is $\alpha^2 \nu \leq C_0$ and the high frequency case is $\alpha^2 \nu > C_0$.

\textbf{The low frequency case $\alpha^2 \nu \leq C_0$:}  \\ 
By Lemma \ref{lem:Evans_function_est}, the following holds (recall here $c = c_r + ic_i$ with $c_i = -\lambda/\alpha$), 
\begin{align}
&\norm{\frac{{\eta_{\ep,p}}}{\lan c\ran}K_{+,1}}^p_{L_y^pL_{c_r}^2}\nonumber\\
 &\lesssim\int_{-1}^1\eta_{\ep,p}^p\left(\int_{-\infty}^\infty \frac{1}{\lan c\ran^{2}\ep^{2}e^{2\al}} \frac{|H_-(Y)|^2}{\big|A_0(\frac{-1-c_r}{\ep}+\delta i)\big|^2}dc_r\right)^{p/2} dy\nonumber\\
 &\lesssim \left(\int_{-1}^{-1+\ep^{3/2}}+\int_{-1+\ep^{3/2}}^{-1+\ep}+\int_{-1+\ep}^1\right)\eta_{\ep,p}^p\left(\int_{-1}^\infty \mathbf{1}_{y\leq c_r}\frac{1}{\lan c\ran^{2}\ep^{2}e^{2\al}} \frac{|H_-(Y)|^2}{\big|A_0(\frac{-1-c_r}{\ep}+\delta i)\big|^2}dc_r\right)^{p/2} dy\nonumber\\
&\quad +\left(\int_{-1}^{-1+\ep^{3/2}}+\int_{-1+\ep^{3/2}}^{-1+\ep}+\int_{-1+\ep}^1\right)\eta_{\ep,p}^{p}\left(\int_{-\infty }^1\mathbf{1}_{y\geq c_r}\frac{1}{\lan c\ran^{2}\ep^{2}e^{2\al}} \frac{|H_-(Y)|^2}{\big|A_0(\frac{-1-c_r}{\ep}+\delta i)\big|^2}dc_r\right)^{p/2} dy\nonumber\\
&=: e^{-p\al}  (T_{+,1}+T_{+,2}+T_{+,3}+T_{-,1}+T_{-,2}+T_{-,3}).\label{T+123T-123}
\end{align}
We estimate the first term $T_{+,1}$ in \eqref{T+123T-123}. We distinguish between two possible cases: $p\in[1,2]$ and $p\in[2,\infty)$. In the first case $p\in[1,2]$, we estimate using the definition of the weight $\eta_{\ep,p}\equiv \ep^{\frac{3}{2}-\frac{3}{2p}}$ in the region $y+1\in[0,\ep^{\frac{3}{2}}]$  \eqref{def:eta1}, the $H_\pm$ estimates \eqref{estimate_of_H_pm}, the $A_0$ estimate \eqref{A_0_lower_bound} and Lemma \ref{Gain_in_poly_power}, we have that
\begin{align*}
T_{+,1}\lesssim &\left(\int_{-1}^{-1+\ep^{3/2}} dy\right)^{1-\frac{p}{2}}\left(\int_{-1}^{-1+\ep^{3/2}} \eta_{\ep,p}^{2}\int_{-1}^\infty \mathbf{1}_{Y_r\leq 0}\frac{1}{\lan c\ran^{2}\ep^{2}} \frac{|H_-(Y)|^2}{\big|A_0(\frac{-1-c_r}{\ep}+\delta i)\big|^2}dc_r dy\right)^{p/2}\\
\lesssim &\ep^{\frac{3}{4}p}\left(\int_{-1}^\infty\frac{1}{\ep\lan c\ran^{2}}\int_{\frac{-1-c_r}{\ep}}^{\min\{\frac{1-c_r}{\ep},0\}} \frac{\lan \frac{-1-c_r}{\ep}+\delta i\ran^{3/2}}{\lan Y\ran^{1/2}} \mathbf{1}_{Y_r\leq 0} e^{\frac{4}{3}Re(Y^{3/2}e^{i\pi/4}-(\frac{-1-c_r}{\ep}+\delta i)^{3/2}e^{i\pi/4})}dY_r dc_r\right)^{p/2}.
\end{align*}
Now we apply the fact that $|Y_r|\leq \big|\frac{-1-c_r}{\ep}\big|$, $\mathrm{ph}(Y)\in[\pi/2,\pi)$ to bound the exponential factor, and hence
\begin{align}
T_{+,1}\lesssim&\ep^{\frac{3}{4}p}\left(\int_{-1}^\infty\frac{1}{\ep\lan c\ran^{2}} \left\lan \frac{-1-c_r}{\ep}\right\ran^{1/2} dc_r\right)^{p/2}
\lesssim1.\label{T+1leq2}
\end{align}
For the $p\in[2,\infty)$ case, we use H\"older inequality's and the definition of $\eta_{\ep,p}^p\equiv\ep^{\frac{3}{2}p-\frac{3}{2}}$ for $y+1\in[0,\ep^{3/2})$ \eqref{def:eta1} to obtain (denoting $\theta=\frac{1}{2}+\frac{3}{4p}$), 
\begin{align*}
T_{+,1}\lesssim &\ep^{\frac{3}{2}p-\frac{3}{2}}\left(\int_{-\infty}^\infty\frac{1}{\lan c\ran^{\frac{2(1-\theta) p}{p-2}}}dc_r\right)^{\frac{p-2}{2}}\int_{-1}^\infty\int_{\frac{-1-c_r}{\ep}}^{\frac{1-c_r}{\ep}}\mathbf{1}_{Y_r\leq 0}\frac{|H_-(Y)|^p}{\ep^{p-1}\lan c\ran^{\theta p}\big|A_0(\frac{-1-c_r}{\ep}+\delta i)\big|^p}dY_r dc_r.
\end{align*}
Combining the $H_\pm$ estimates \eqref{estimate_of_H_pm}, the $A_0$ estimate \eqref{A_0_lower_bound} and Lemma \ref{Gain_in_poly_power}, and applying an argument similar to that used in the $p\in[1,2]$ case,  we obtain
\begin{align}
T_{+,1}\lesssim & \ep^{\frac{3}{2}p-\frac{3}{2}}\int_{-1}^\infty\frac{1}{\ep^{p-1}\lan c\ran^{\theta p}}\int_{\frac{-1-c_r}{\ep}}^{\min\{\frac{1-c_r}{\ep},0\}} \frac{\lan \frac{-1-c_r}{\ep}+\delta i\ran^{3p/4}}{\lan Y\ran^{p/4}}e^{\frac{2p}{3}Re(Y^{3/2}e^{i\pi/4}-(\frac{-1-c_r}{\ep}+\delta i)^{3/2}e^{i\pi/4})}dY_r dc_r\nonumber\\
\lesssim&\ep^{\frac{3}{2}p-\frac{3}{2}}\int_{-1}^\infty\frac{1}{\ep^{p-1}\lan c\ran^{\theta p}} \left\lan \frac{-1-c_r}{\ep}+\delta i\right\ran^{p/2-1/2}
dc_r\nonumber\\
\lesssim&\int_{-1}^\infty\frac{1}{\lan c\ran^{(\theta -1/2)p+1/2}}dc_r\lesssim 1.\label{T+1}
\end{align}
Next we estimate the $T_{+,2}$ term in \eqref{T+123T-123}. Combining the $H_\pm$ estimates \eqref{estimate_of_H_pm}, the $A_0$ estimate \eqref{A_0_lower_bound} and Lemma \ref{Gain_in_poly_power}, we have that
\begin{align*}
T_{+,2}\lesssim &\int_{-1+\ep^{3/2}}^{-1+\ep} \eta_{\ep,p}^{p}\left(\int_{-1}^\infty\frac{\mathbf{1}_{Y_r\leq 0}}{\ep^{2}\lan c\ran^{2}} \frac{\lan \frac{-1-c_r}{\ep}+\delta i\ran^{3/2}}{\lan Y\ran^{1/2}}e^{\frac{4}{3}Re(Y^{3/2}e^{i\pi/4}-(\frac{-1-c_r}{\ep}+\delta i)^{3/2}e^{i\pi/4})}dc_r\right)^{p/2}dy.
\end{align*}
Next we need to gain from the exponential factor. On the support of the integrand, we have the following inequality (proved, for example, by differentiation), 
\begin{align}\label{Com_monotone}
Re((Y_r+\delta i)^{3/2}e^{i\pi/4})\leq 5\delta\mathbf{1}_{|-1-c_r|\ep^{-1}\leq 5\delta}+Re\left((Y_r+\delta i)\bigg(\frac{-1-c_r}{\ep}+\delta i\bigg)^{1/2}e^{i\pi/4}\right). 
\end{align}
Applying this monotonicity relation, there exists a universal small constant $C$ such that the following holds 
\begin{align*}
T_{+,2}\lesssim&\int_{-1+\ep^{3/2}}^{-1+\ep}\eta_{\ep,p}^{p}\bigg(\int_{-1}^\infty\frac{1}{{\ep^2\lan c\ran ^{2}}}\left\lan\frac{-1-c_r}{\ep}+\delta i\right\ran \nonumber\\
&\quad\quad\times e^{\frac{{4}}{3}Re[(-(\frac{-c_r-1}{\ep}+\delta i)^{3/2}e^{i\pi/4}+(\frac{ -c_r-1}{\ep}+\frac{1+y}{\ep}+\delta i)( \frac{-c_r-1}{\ep}+\delta i)^{1/2}e^{i\pi/4}]} d{ c_r} \bigg)^{p/2}dy\nonumber\\
\lesssim &\int_{-1+\ep^{3/2}}^{-1+\ep}\eta_{\ep,p}^{p}\left(\int_{-1}^\infty\frac{1}{\ep^{2}\lan c\ran ^{2}}\left\lan\frac{-1-c_r}{\ep}+\delta i\right\ran e^{\frac{4}{3}\frac{y+1}{\ep}Re( (\frac{-1-c_r}{\ep}+\delta i)^{1/2}e^{i\pi/4})}dc_r\right)^{p/2}dy\nonumber\\
\lesssim &\int_{-1+\ep^{3/2}}^{-1+\ep}\eta_{\ep,p}^p\left(\int_{-1}^\infty\frac{1}{\ep}\left\lan\frac{(1+c_r)(y+1)^2}{\ep^3}\right\ran e^{-C^{-1}\big|\frac{(1+c_r)(y+1)^2}{\ep^3}\big|^{1/2}}\frac{\ep}{(y+1)^2} d c_r\right)^{p/2} dy. 
\end{align*} 
Changing variables  $y \mapsto u :=\frac{(c_r+1)(y+1)^2}{\ep^3}$ (recall definition \eqref{def:eta1}), we obtain
\begin{align}\label{T2+est}
T_{2,+}\lesssim &\int_{-1+\ep^{3/2}}^{-1+\ep}\eta_{\ep,p}^p\frac{\ep^{\frac{3}{2}p}}{(y+1)^{2p}}dy \lesssim 1.
\end{align}

Next we estimate the $T_{+,3}$ term in \eqref{T+123T-123}. Combining the definition \eqref{def:eta1}, the $H_\pm$ estimates \eqref{estimate_of_H_pm}, the $A_0$ estimate \eqref{A_0_lower_bound}, Lemma \ref{Gain_in_poly_power} and the monotonicity relation \eqref{Com_monotone} (which holds on the support of the integrand), with an argument similar to that used to estimate $T_{+,2}$ implies there exists a universal constant $C$ such that 
\begin{align*}
T_{+,3}\lesssim &\int_{-1+\ep}^{1}\eta_{\ep,p}^{p}\left(\int_{y}^\infty\frac{1}{\ep^{2}\lan c\ran^{2}} \frac{\lan \frac{-1-c_r}{\ep}+\delta i\ran^{3/2}}{\lan Y\ran^{1/2}}e^{\frac{4}{3}Re(Y^{3/2}e^{i\pi/4}-(\frac{-1-c_r}{\ep}+\delta i)^{3/2}e^{i\pi/4})}dc_r\right)^{p/2}dy \nonumber\\
\lesssim&\int_{-1+\ep}^1\ep^{-1+\frac{p}{2}}e^{pc_\star \frac{(y+1)^{3/2}}{\ep^{3/2}}}\bigg(\int_{y}^\infty\frac{1}{{\ep^2\lan c\ran ^{2}}}\left\lan\frac{-1-c_r}{\ep}+\delta i \right\ran ^{3/2}e^{-C^{-1}|\frac{y+1}{\ep}||\frac{-1-c_r}{\ep}|^{1/2}} d{ c_r} \bigg)^{p/2}dy. 
\end{align*}
Observe that the constraint $c_r\geq y$ yields the relation $|-1-c_r|\geq y+1$. Combining this fact and $y+1\geq \ep$, we choose $c_\star$ small enough in the weight $\eta_{\ep,p}$ to obtain that
\begin{align*}
T_{+,3}\lesssim &\int_{-1+\ep}^{1}\ep^{-1+\frac{p}{2}}e^{pc_\star \frac{(y+1)^{3/2}}{\ep^{3/2}}}\left(\int_{y}^\infty\frac{1}{\ep^{2}\lan c\ran ^{2}}\left\lan\frac{-1-c_r}{\ep}+\delta i\right\ran^{3/2} e^{-\frac{C^{-1}}{2}|\frac{y+1}{\ep}|^{3/2} -\frac{C^{-1}}{2}|\frac{-1-c_r}{\ep}|^{1/2}}dc_r\right)^{p/2}dy\nonumber\\
\lesssim &\ep^{-1}\int_{-1+\ep}^1e^{-p(C^{-1}/2-c_\star)(\frac{1+y}{\ep})^{\frac{3}{2}}} dy\lesssim 1.
\end{align*}
This completes the estimation of all the $T_{+,\cdot}$ terms in \eqref{T+123T-123}.
 
Next we estimate the $T_{-,1}$ term in \eqref{T+123T-123}. As in the treatment of the $T_{+,1}$ term in \eqref{T+123T-123}, we distinguish two cases:  $p\in[2,\infty)$ or $p\in[1,2]$. In the case $p\in[2,\infty)$, applying H\"older's inequality and Fubini's theorem,  we obtain (denoting $\theta =\frac{1}{2}+\frac{3}{4p}$)
\begin{align*}
T_{-,1}\lesssim &\ep^{\frac{3}{2}p-\frac{3}{2}}\lan\al\ep\ran^{p}\left(\int_{-\infty}^\infty\frac{1}{\lan c\ran^{\frac{2(1-\theta) p}{p-2}}}dc_r\right)^{\frac{p-2}{2}}\int_{-\infty}^1\int_{\frac{-1-c_r}{\ep}}^{\frac{1-c_r}{\ep}}\mathbf{1}_{Y_r\geq 0}\frac{|H_-(Y)|^p}{\ep^{p-1}\lan c\ran^{\theta p}\big|A_0(\frac{-1-c_r}{\ep})\big|^p}dY_r dc_r.
\end{align*}
In the $c_r\leq y$ case, $|A_0(\ep^{-1}(-1-c_r+\delta\ep i))|^{-1}$ becomes super-exponentially large and $|H_-(Y)|$ becomes super-exponentially small, hence we use a different asymptotic expansion here. Combining the definition of the weight $\eta_{\ep,p}$ \eqref{def:eta1}, the $H_\pm$ estimates \eqref{estimate_of_H_pm}, the $A_0$ estimate \eqref{A_0_lower_bound} yields
\begin{align*}
T_{-,1}\lesssim & \ep^{\frac{3}{2}p-\frac{3}{2}}\int_{-1}^1\frac{1}{\ep^{p-1}\lan c\ran^{\theta p}}\int_{\max\{\frac{-1-c_r}{\ep},0\}}^{\frac{1-c_r}{\ep}} \frac{1}{\lan Y\ran^{p/4}}e^{-\frac{2p}{3}Re(Y^{3/2}e^{i\pi/4})}dY_r dc_r\nonumber\\
&+\ep^{\frac{3}{2}p-\frac{3}{2}}\int_{-\infty}^{-1}\frac{1}{\ep^{p-1}\lan c\ran^{\theta p}}\int_{\max\{\frac{-1-c_r}{\ep},0\}}^{\frac{1-c_r}{\ep}} \frac{\lan \frac{-1-c_r}{\ep}+\delta i\ran^{3p/4}}{\lan Y\ran^{p/4}}e^{\frac{2p}{3}Re((\frac{-1-c_r}{\ep}+\delta i)^{3/2}e^{i\pi/4}-Y^{3/2}e^{i\pi/4})}dY_r dc_r.
\end{align*}
The first term is treated by direct integration. 
For the second term, we gain a fraction power of $ \left\lan \frac{-1-c_r}{\ep}+\delta  i\right\ran$ by integrating $Y_r$ first.
Indeed, Lemma \ref{Gain_in_poly_power} implies
\begin{align}
T_{-,1}\lesssim&\ep^{\frac{1}{2}p-\frac{1}{2}} +\ep^{\frac{3}{2}p-\frac{3}{2}}\int_{-1}^\infty\frac{1}{\ep^{p-1}\lan c\ran^{\theta p}} \left\lan \frac{-1-c_r}{\ep}+\delta i\right\ran^{p/2-1/2} dc_r \nonumber\\
\lesssim & \ep^{\frac{1}{2}p-\frac{1}{2}} +  \ep^{\frac{1}{2}p-\frac{1}{2}}\int_{-1}^\infty\frac{1}{\lan c\ran^{(\theta -1/2)p+1/2}}dc_r\lesssim 1.\label{T-1}
\end{align}
This completes the case $p \in [2,\infty)$.
Combining the treatments of the $T_{+,1}$ term for $p\in[1,2]$ and  $T_{-,1}$ for  $p\in[2,\infty)$ gives the desired estimates on $T_{-,1}$ for $p\in[1,2]$.
This completes the estimation of the $T_{-,1}$ term in \eqref{T+123T-123}. 

For the remaining $T_{-,2}$ and $T_{-,3}$ terms in \eqref{T+123T-123}, we separate into two cases, $c_r\in[-1,1]$ and $c_r\in(-\infty,-1)$, based on the following consideration. For the $T_{-,\cdot}$ terms, the domain of $c_r$ is restricted such that the argument of $H_-$ has positive real parts, i.e. $Y_r=\frac{y-c_r}{\ep}\geq 0$, so the factor $|H_-(Y)|$ is exponentially decreasing. As a result, as long as the growth of the $|A_0|^{-1}$ is properly controlled, the estimates follow. In the case $c_r\in[-1,1]$, since $-1-c_r$ is less than $0$ and $\delta$ is chosen small enough, there holds $\left|A_0\left(\frac{-1-c_r}{\ep}+\delta i\right)\right|\gtrsim 1$. For the case $c_r\in(-\infty,-1]$, we use the lower bound on $\left|A_0\left(\frac{-1-c_r}{\ep}+\delta i\right)\right|$ in \eqref{A_0_lower_bound} and a similar argument to the treatment of $T_{+,2}$ and $T_{+,3}$ in \eqref{T+123T-123}.  
Consider first $T_{-,2}$ in \eqref{T+123T-123}, we decompose the integral into two parts:
\begin{align}
T_{-,2}=
&\int_{-1+\ep^{3/2}}^{-1+\ep}\eta_{\ep,p}^p\left(\int_{-1}^1\mathbf{1}_{y\geq c_r}\frac{1}{\lan c\ran^2\ep^2}\frac{|H_-(Y)|^2}{\big|A_0(\frac{-1-c_r}{\ep}+\delta i)\big|^2}dc_r
\right)^{p/2}dy\nonumber\\
&+\int_{-1+\ep^{3/2}}^{-1+\ep}\eta_{\ep,p}^p\left(\int_{-\infty}^{-1}\mathbf{1}_{y\geq c_r} \frac{1}{\lan c\ran ^{2}\ep^2}\frac{|H_-(Y)|^2}{\big|A_0(\frac{-1-c_r}{\ep}+\delta i)\big|^2}dc_r\right)^{p/2}dy\nonumber\\
=:&T_{-,21}+T_{-,22}.\label{T-2-12}
\end{align}
To estimate the $T_{-,21}$ term in \eqref{T-2-12}, we combine the definition of the weight $\eta_{\ep,p}$ \eqref{def:eta1}, the $H_\pm$ estimates \eqref{estimate_of_H_pm} and the estimate $A_0 \gtrsim 1$ for $c_r\geq -1$ \eqref{A_0_lower_bound} to get 
\begin{align*}
T_{-,21}\lesssim&\int_{-1+\ep^{3/2}}^{-1+\ep}\eta_{\ep,p}^p\left(\int_{-1}^1\mathbf{1}_{Y_r\geq 0}\frac{1}{\lan c\ran ^2\ep^2}e^{-\frac{4}{3}Re(Y^{3/2}e^{i\pi/4})}dc_r\right)^{p/2}dy=\int_{-1+\ep^{3/2}}^{-1+\ep}\eta_{\ep,p}^p \ep^{-p/2}dy\lesssim 1.
\end{align*} 
To estimate the $T_{-,22}$ term in \eqref{T-2-12}, we combine the definition of the weight $\eta_{\ep,p}$ \eqref{def:eta1}, the $H_\pm$ estimates \eqref{estimate_of_H_pm} and the $A_0$ estimate \eqref{A_0_lower_bound} and use the same argument as in the estimate of the $T_{+,2}$ term \eqref{T2+est} to get
\begin{align*}
T_{-,22}\lesssim &\int_{-1+\ep^{3/2}}^{-1+\ep}\eta_{\ep,p}^p\left(\int_{-\infty}^{-1}\frac{1}{\ep^{2}\lan c\ran^2}\left\lan\frac{-1-c_r}{\ep}+\delta i\right\ran e^{\frac{4}{3}Re[(\frac{-c_r-1}{\ep}+\delta i)^{3/2}-(\frac{y-c_r}{\ep}+\delta i)^{3/2})e^{i\pi/4}]} d{ c_r}\right)^{p/2}dy\nonumber\\
\lesssim&\int_{-1+\ep^{3/2}}^{-1+\ep} \eta_{\ep,p}^p\left(\int_{-\infty}^{-1} \frac{1}{\ep^{2}\lan c\ran^2}\left\lan\frac{-1-c_r}{\ep}+\delta i\right\ran e^{-\frac{4}{3}\frac{y+1}{\ep}Re[(\frac{-1-c_r}{\ep}+\delta i)^{1/2}e^{i\pi/4}]} d{c_r}\right)^{p/2}dy\\
\lesssim & 1.
\end{align*}
For the last term $T_{-,3}$ in \eqref{T+123T-123}, we apply the definition \eqref{def:eta1} and decompose the integral into four parts:
\begin{align}
T_{-,3}\lesssim &\int_{-1+\ep}^1e^{pc_\star\frac{(y+1)^{3/2}}{\ep^{3/2}}}\ep^{\frac{p}{2}-1}\left(\int_{-\infty}^{-1}\mathbf{1}_{y\geq c_r}\mathbf{1}_{-1-c_r\leq \frac{y+1}{4}} \frac{1}{\lan c\ran ^{2}\ep^2}\frac{|H_-(Y)|^2}{\big|A_0(\frac{-1-c_r}{\ep}+\delta i)\big|^2}dc_r\right)^{p/2}dy\nonumber\\&
+\int_{-1+\ep}^1e^{pc_\star\frac{(y+1)^{3/2}}{\ep^{3/2}}}\ep^{\frac{p}{2}-1}\left(\int_{-\infty}^{-1}\mathbf{1}_{y\geq c_r}\mathbf{1}_{-1-c_r>\frac{y+1}{4}} \frac{1}{\lan c\ran ^{2}\ep^2}\frac{|H_-(Y)|^2}{\big|A_0(\frac{-1-c_r}{\ep}+\delta i)\big|^2}dc_r\right)^{p/2}dy\nonumber\\
&+\int_{-1+\ep}^1e^{pc_\star\frac{(y+1)^{3/2}}{\ep^{3/2}}}\ep^{\frac{p}{2}-1}\left(\int_{-1}^1\mathbf{1}_{y\geq c_r}\mathbf{1}_{|y-c_r|\leq \frac{y+1}{2}} \frac{1}{\lan c\ran^2\ep^2}\frac{|H_-(Y)|^2}{\big|A_0(\frac{-1-c_r}{\ep}+\delta i)\big|^2}dc_r
\right)^{p/2}dy\nonumber\\
&+\int_{-1+\ep}^1e^{pc_\star\frac{(y+1)^{3/2}}{\ep^{3/2}}}\ep^{\frac{p}{2}-1}\left(\int_{-1}^1\mathbf{1}_{y\geq c_r}\mathbf{1}_{|y-c_r|>\frac{y+1}{2}} \frac{1}{\lan c\ran^2\ep^2}\frac{|H_-(Y)|^2}{\big|A_0(\frac{-1-c_r}{\ep}+\delta i)\big|^2}dc_r
\right)^{p/2}dy\nonumber\\
=:&T_{-,31}+T_{-,32}+T_{-,33}+T_{-,34}.\label{T-3-1234}
\end{align}
To estimate the $T_{-,31}$ term in \eqref{T-3-1234}, the definition of the weight $\eta_{\ep,p}$ \eqref{def:eta1}, the $H_\pm$ estimates \eqref{estimate_of_H_pm}and the $A_0$ estimate \eqref{A_0_lower_bound} to get
\begin{align*}
T_{-,31}\lesssim &\ep^{\frac{p}{2}-1}\int_{-1+\ep}^1 e^{pc_\star \frac{(y+1)^{3/2}}{\ep^{3/2}}}\\
&\quad \times\left(\int_{-\infty}^{-1}\ep^{-2}\mathbf{1}_{Y_r\geq 0} \mathbf{1}_{-1-c_r\leq \frac{y+1}{4}}\left\lan\frac{-1-c_r}{\ep}+\delta i\right\ran^{\frac{3}{2}} e^{\frac{4}{3}Re[(\frac{-c_r-1}{\ep}+\delta i)^{3/2}-Y^{3/2})e^{i\pi/4}]} d{ c_r}\right)^{p/2}dy.
\end{align*}
We apply the constraints $-c_r-1\leq \frac{y+1}{4}$, $c_r\leq- 1$ and $y+1\geq \ep$ to control the exponential factor in the integral and obtain that, for a universal constant $C$, the following estimate holds 
\begin{align}
T_{-,31} \lesssim&\ep^{\frac{p}{2}-1}\int_{-1+\ep}^1 e^{pc_\star \frac{(y+1)^{3/2}}{\ep^{3/2}}}\bigg(\int_{-\infty}^{-1}\ep^{-2} \left\lan\frac{-1-c_r}{\ep}+\delta i\right\ran^{\frac{3}{2}}e^{-C^{-1}\frac{(y+1)^{3/2}}{\ep^{3/2}}-\frac{1}{16}Re[(1+\frac{-1-c_r}{\ep}+\delta i)^{3/2}e^{i\pi/4}]} d{c_r}\bigg)^{p/2}dy \nonumber \\
\lesssim &\ep^{-1}\int_{-1+\ep}^1e^{-p(C^{-1}-c_\star)(\frac{1+y}{\ep})^{\frac{3}{2}}} dy\lesssim 1,\label{T-31}
\end{align}
where the last line followed by choosing the constant $c_\star$ in the weight $\eta_{\ep,p}$ small enough. 
To estimate the $T_{-,32}$ term in \eqref{T-3-1234}, we combine the $H_\pm$ estimates \eqref{estimate_of_H_pm} and the $A_0$ estimate \eqref{A_0_lower_bound} and use the same argument as in the estimate of the $T_{+,2}$ term \eqref{T2+est} to get
\begin{align*}
T_{-,32}
\lesssim&\ep^{\frac{p}{2}-1}\int_{-1+\ep}^1 e^{pc_\star \frac{(y+1)^{3/2}}{\ep^{3/2}}}\left(\int_{-\infty}^{-1} \ep^{-2}\mathbf{1}_{-1-c_r\geq \frac{y+1}{4}}\left\lan\frac{-1-c_r}{\ep}+\delta i\right\ran^{\frac{3}{2}}e^{-\frac{4}{3}Re[(\frac{y+1}{\ep})(\frac{-1-c_r}{\ep}+\delta i)^{1/2}e^{i\pi/4}]} d{c_r}\right)^{p/2}dy.
\end{align*}
The constraint that $\frac{-1-c_r}{\ep}\geq \frac{y+1}{4\ep}$ and $y+1\geq \ep$ yields that there exists a universal constant $C$ such that the following estimate holds
\begin{align}
T_{-,32}\lesssim&\ep^{\frac{p}{2}-1}\int_{-1+\ep}^1 e^{pc_\star \frac{(y+1)^{3/2}}{\ep^{3/2}}}\left(\int_{-\infty}^{-1} \ep^{-2}\left\lan\frac{-1-c_r}{\ep}+\delta i\right\ran^{\frac{3}{2}}e^{-C^{-1}\frac{(y+1)^{3/2}}{\ep^{3/2}}-\frac{1}{16}Re[(\frac{-1-c_r}{\ep}+\delta i )^{1/2}e^{i\pi/4}]} d{c_r}\right)^{p/2}dy \nonumber \\
\lesssim &\ep^{-1}\int_{-1+\ep}^1e^{-p(C^{-1}-c_\star)(\frac{1+y}{\ep})^{\frac{3}{2}}} dy\lesssim 1, \label{T-32}
\end{align}
where the last estimate followed by choosing the $c_\star$ small compared to $C^{-1}$. 
Next, we estimate the $T_{-,33}$ term in \eqref{T-3-1234}, combining $y-c_r\leq \frac{y+1}{4}$ together with $-1\leq c_r\leq y$ implies $|1+c_r|\geq\frac{y+1}{2}$, the $H_\pm$ estimates \eqref{estimate_of_H_pm}  and the $A_0$ estimate \eqref{A_0_lower_bound} to get that there exists a universal constant $C$ such that 
\begin{align}
T_{-,33}\lesssim &\ep^{\frac{p}{2}-1}\int_{-1+\ep}^1 e^{pc_\star \frac{(y+1)^{3/2}}{\ep^{3/2}}}\left(\int_{-1}^{y}\ep^{-2}\mathbf{1}_{|1+c_r|\geq \frac{y+1}{2}}\left\lan\frac{-1-c_r}{\ep}+\delta i\right\ran^{\frac{3}{2}} e^{-\frac{4}{3}Re[(\frac{-c_r-1}{\ep}+\delta i)^{3/2}e^{i\pi/4}]} d{ c_r}\right)^{p/2}dy \nonumber \\
\lesssim &\ep^{-1}\int_{-1+\ep}^1e^{-p(C^{-1}-c_\star)(\frac{1+y}{\ep})^{\frac{3}{2}}} dy\lesssim 1, \label{T-33}
\end{align}
where the last line followed by choosing the constant $c_\star$ sufficiently small. 
Finally, we estimate the $T_{-,34}$ term in \eqref{T-3-1234}, we combine the fact that $Y_r=\frac{y-c_r}{\ep}\geq\frac{y+1}{2\ep}$ in this region, the $H_\pm$ estimates \eqref{estimate_of_H_pm} and the $A_0$ estimate $|A_0|\gtrsim 1$ \eqref{A_0_lower_bound} to get
\begin{align}
T_{-,34}
\lesssim &\ep^{-1+\frac{p}{2}}\int_{-1+\ep}^1 e^{pc_\star \frac{(y+1)^{3/2}}{\ep^{3/2}}}\left(\int_{-1}^{y}\ep^{-2} e^{-C^{-1}(\frac{y+1}{\ep})^{3/2}-\frac{1}{3}Re[Y^{3/2}e^{i\pi/4}]} d{ c_r}\right)^{p/2}dy\nonumber\\
\lesssim &\ep^{-1}\int_{-1+\ep}^1e^{-p(C^{-1}-c_\star)(\frac{1+y}{\ep})^{\frac{3}{2}}} dy\lesssim 1.\label{T-34}
\end{align}
Combining all of \eqref{T-3-1234}, \eqref{T-31}, \eqref{T-32}, \eqref{T-33} and \eqref{T-34} completes the estimate of $T_{-,3}$. 
This completes the proof of \eqref{ineq:Kpm1} for $K_{+,1}$. 

We next give a brief sketch of how to treat the remaining terms in \eqref{K_+} and \eqref{K_-}.
First, the treatment of $K_{+,1}$ extends analogously to cover $K_{-,1}$.
The treatment of the $K_{\ast,2}$ terms are analogous with some minor changes.
First, the decomposition of the $y$ integrals will be into A) $y\in[1-\ep^{3/2}, 1]$; B) $y\in[1-\ep,1-\ep^{3/2}]$; C) $y\in[-1,1-\ep]$. 
Seceond, we apply the fact that $Ai(y)=\overline{Ai(\overline{y})}$ to rewrite $H_+(Y)=Ai(e^{5\pi i/6}Y)$ as $\overline{Ai(e^{- 5\pi i/6}\overline{Y})}=\overline{Ai\left(e^{\pi i/6}\left(\frac{c_r-y}{\ep}+\delta i\right)\right)}$. Finally we observe the following monotonicity :
\begin{align}
e^{\frac{2}{3}Re(e^{i\pi/4}(\frac{c_r-y}{\ep}+\delta i)
^{3/2})}\lesssim e^{\frac{2}{3}Re(e^{i\pi/4}(\frac{c_r-1}{\ep}+\delta i)^{1/2}(\frac{c_r-y}{\ep}+\delta i))},\quad c_r-1\leq c_r-y\leq 0 .
\end{align}
After applying these observations, the treatment of the $K_{\ast,2}$ terms follow analogously to the $K_{\ast,1}$ terms with only minor changes.
The details are omitted for the sake of brevity.

\textbf{The high frequency case $\alpha^2 \nu > C_0$:} \\
First, note that $C_0$ is chosen such that $(1-\frac{\kappa}{2})\alpha^2 \nu \geq (1-\kappa) \alpha^2 \nu + \delta \alpha^{2/3}\nu^{1/3}$.
Therefore, by proving the slightly stronger estimate in this region using $\lambda = (1-\frac{\kappa}{2})\alpha^2 \nu$ we obtain the desired result.
Hence, it suffices to use this slightly stronger $\lambda$ here.
After one makes this shift, the difference between low and high frequency cases is mainly a matter of different complex phase book-keeping. After this shift, the range of interest for $c$ is $c\in\{c=c_r+c_i|c_r\in\rr,\, c_{i}=-(1-\frac{\kappa}{2})\al\nu\}$.
Since $\al$ is large, the imaginary part of the Langer variables \eqref{Langer_variable} becomes $Y_i=-\frac{1}{2}\kappa\al^{4/3}\nu^{2/3}<0$, jumping the phase from $(0, \pi)$ to $(-\pi,0)$ and creating a minor inconvenience in using the asymptotic expansions of the Airy functions.
To this end, we replace inequalities \eqref{estimate_of_H_pm_high_mode} and \eqref{ineq:powergain_2} with \eqref{estimate_of_H_pm} and \eqref{ineq:powergain} when estimating the homogeneous solutions $H_\pm$ and the Evans function $D(\al, c)$. Furthermore, we use the inequalities \eqref{Evans_function_lower_bound_2}, \eqref{H_+_int_high_mode}, and \eqref{H_-_int_high_mode} instead of \eqref{Evans_function_lower_bound_1}, \eqref{H_+_int}, and \eqref{H_-_int} when we carry out the estimates. From there, a straightforward adaptation of the arguments above to estimate $K_{\pm}$  yields the desired estimates.

\subsection{Inviscid damping estimate \eqref{ineq:IDu}, \eqref{ineq:LinfL1K}, and \eqref{ineq:LpLinfK}} \label{sec:IDtrick}
Next, we prove the $L_t^2L_y^2$ inviscid damping estimate of the $u_\al$ in \eqref{ineq:IDu}, \eqref{ineq:LinfL1K}, and \eqref{ineq:LpLinfK}. 

First, we justify \eqref{ineq:evdt5}. Denote $Green_{\alpha, \de_D} = \begin{pmatrix}-\partial_y \\ -i\alpha \end{pmatrix} (-\Delta_{\alpha,D})^{-1}$, where $(-\Delta_{\alpha,D})^{-1}$ denotes the operator $(\alpha^2 - \partial_{yy})^{-1}$ when equipped with Dirichlet boundary conditions.
As in the previous section, we separate between low and high frequency cases.

\textbf{The low frequency case $\alpha^2 \nu \leq C_0$:}  \\ 
Standard elliptic estimates yield (defining $c= c_r - i \lambda/\alpha$ as above), 
\begin{align*}
||e^{\lambda t} \widehat{u_b}(t,\alpha,\cdot)||_{L_t^2L_y^2} ^2 \lesssim&\int_\rr\int_{-1}^1 \left(\int Green_{\de_D} \mathcal{R}(y, c)dy\right)^2dydc_r\\
\lesssim&\int_\rr||\mathcal{R}(y,c)||_{L_y^1}^2dc_r\lesssim\int_\rr \left(\int \frac{1}{\lan c\ran}|K_{\pm}| dy\right)^2(\lan c\ran \mathcal{R}_f[c])^2 dc_r\\
\lesssim& \left(\sup_{c\in\Gamma}\int_{-1}^1 \frac{1}{\lan c\ran}|K_{\pm}(y,c)|dy\right) ^2 \norm{\widehat{\omega}_{in}(\alpha,\cdot)}_{L^2_y}
\end{align*}where the supremum is taken over $\Gamma:=\{c=c_r-(\delta\ep+ (1-\kappa)\al \nu) i,\ c_r\in(-\infty,\infty)\}$ in the last line.
We further decompose the $K_\pm$ as $K_{+,1}$, $K_{+,2}$, $K_{-,1}$ and $K_{-,2}$ as in \eqref{K_+}, \eqref{K_-}.
The estimates are all analogous, so we simply consider the case $K_{+,1}$. 

Analogous to \eqref{T+123T-123}, we apply Lemma \ref{lem:Evans_function_est} and decompose the integral
\begin{align}
&\sup_{c\in \Gamma}\int_{-1}^1\frac{|K_{+,1}(Y,c)|}{\lan c\ran }dy\nonumber\\
 &\lesssim \left(\sup_{c_r\in[-1,\infty)}\int_{-1}^{-1+\ep^{3/2}}+\sup_{c_r\in[-1,\infty)}\int_{-1+\ep^{3/2}}^{-1+\ep}+\sup_{c_r\in[-1,\infty)}\int_{-1+\ep}^1\right) \mathbf{1}_{y\leq c_r}\frac{1}{\lan c\ran\ep e^{\al}} \frac{|H_-(Y)|}{\big|A_0(\frac{-1-c_r}{\ep}+\delta i)\big|} dy\nonumber\\
&\ +\left(\sup_{c_r\in(-\infty,1]}\int_{-1}^{-1+\ep^{3/2}}+\sup_{c_r\in(-\infty,1]}\int_{-1+\ep^{3/2}}^{-1+\ep}+\sup_{c_r\in(-\infty,1]}\int_{-1+\ep}^1\right)\mathbf{1}_{y\geq c_r}\frac{1}{\lan c\ran\ep e^{\al}} \frac{|H_-(Y)|}{\big|A_0(\frac{-1-c_r}{\ep}+\delta i)\big|} dy\nonumber\\
&=:e^{-\al} (T_{+,1}^1+T_{+,2}^1+T_{+,3}^1+T_{-,1}^1+T_{-,2}^1+T_{-,3}^1).\label{T+123T-123_1}
\end{align} 
We estimate each term of \eqref{T+123T-123_1} and show that they are of order $1$.

First we estimate the $T_{\pm,1}^1$ terms in \eqref{T+123T-123_1}. Combining the $H_\pm$ estimates \eqref{estimate_of_H_pm} and the $A_0$ estimate \eqref{A_0_lower_bound}, we apply a similar argument to the estimate of $T_{+,1}$ term in \eqref{T+123T-123} to obtain that
\begin{align*}
T_{+,1}^1\lesssim \sup_{c_r\in[-1,\infty)}\int_{-1}^{-1+\ep^{3/2}}\frac{\lan \frac{-1-c_r}{\ep}+\delta i \ran^{1/2} }{\ep\lan c\ran }dy\lesssim \int_{-1}^{-1+\ep^{3/2}}\frac{1}{\ep^{3/2}}dy=1.
\end{align*}
The estimate of the $T_{-,1}^1$ is similar to that of $T_{-,1}$ in \eqref{T+123T-123}. We omit the details for the sake of brevity.

Next, we estimate the $T_{\cdot, 2}^1$ terms in \eqref{T+123T-123_1}. 
Combining the $H_\pm$ estimates \eqref{estimate_of_H_pm}, the $A_0$ estimate \eqref{A_0_lower_bound} and Lemma \ref{Gain_in_poly_power}, we have that there exists a universal constant $C$ such that
\begin{align*}
T_{+,2}^1\lesssim&\sup_{c_r\in[-1,\infty)}\frac{1}{\ep}\int_{-1+ \ep^{3/2}}^{-1+\ep}\frac{1}{\lan -1-c_r\ran}\mathbf{1}_{|1+c_r|\geq \ep}e^{-C^{-1}|\frac{y+1}{\ep}||\frac{-c_r-1}{\ep}|^{1/2}}\left| \frac{-1-c_r}{\ep}\right| ^{1/2}dy\\
&+\sup_{c_r\in[-1,\infty)}\frac{1}{\ep}\int_{-1+\ep^{3/2}}^{-1+\ep} \mathbf{1}_{|1+c_r|\leq \ep}d{y}\lesssim 1. 
\end{align*}
The estimate of $T_{-,2}^1$, is similar to that off $T_{-,2}$ in \eqref{T+123T-123}. We omit the details for the sake of brevity. 
 
Finally, we estimate the $T_{\pm,3}^1$ terms in \eqref{T+123T-123_1}. The estimate of $T_{+,3}^1$ is similar to that of $T_{+,3}$ in \eqref{T+123T-123}.
Combining the $H_\pm$ estimates \eqref{estimate_of_H_pm}, the $A_0$ estimate \eqref{A_0_lower_bound}, and the fact that $1+c_r\geq\ep$ on the support of the integrand,  there exists a universal constant $C$ such that
\begin{align*}
T_{+,3}^1\lesssim &\sup_{c_r\in[-1,\infty)}\frac{1}{\ep} \int_{-1+\ep}^1\frac{\mathbf{1}_{y\leq c_r}}{\lan -1-c_r\ran}\left| \frac{-1-c_r}{\ep}\right| ^{1/2}\left|\frac{y+1}{\ep}\right|^{1/4}e^{-\frac{(y+1)^{3/2}}{4\ep^{3/2}}}e^{-C^{-1}|\frac{y+1}{4\ep}||\frac{-c_r-1}{\ep}|^{1/2}} dy\lesssim 1.
\end{align*}
The estimate of the $T_{-,3}^1$ is similar to the estimate of $T_{-,3}$ in \eqref{T+123T-123}. One can decompose the supremum in $c_r$ into the $c_r\leq -1$ case and the $c_r\in[-1,1]$ cases and apply different  estimates of $H_\pm$ and $A_0$ to get the desired estimates. We omit the details for the sake of brevity. 

\textbf{The high frequency case $\alpha^2 \nu > C_0$:} \\
As in Section \ref{sec:kpm1pf} we use the slightly stronger $\lambda = (1-\frac{\kappa}{2})\alpha^2 \nu$.
The $c$ of interest has  $c_i=-(1-\frac{1}{2}\kappa)\al\nu$ and the Langer variables have imaginary part $-\frac{1}{2}\kappa \al^{4/3}\nu^{2/3}$.
By replacing inequalities \eqref{estimate_of_H_pm}, \eqref{ineq:powergain},  \eqref{Evans_function_lower_bound_1}, \eqref{H_+_int}, and \eqref{H_-_int} with inequalities \eqref{estimate_of_H_pm_high_mode},  \eqref{ineq:powergain_2},  \eqref{Evans_function_lower_bound_2}, \eqref{H_+_int_high_mode}, and \eqref{H_-_int_high_mode},  one proves the estimates with similar methods that that used in the low frequency case. The details are omitted for the sake of brevity.

This concludes the proof of \eqref{ineq:LinfL1K}.
The proof of \eqref{ineq:LpLinfK} follows from a combination of the arguments used to prove \eqref{ineq:LinfL1K} and those used to prove \eqref{ineq:Kpm1}.
We omit the repetitive details for the sake of brevity.
This concludes the proof of Theorem \ref{thm:Sepbd} and Theorem \ref{thm:RegData}.

\section{$L^2$ case: Resolvent estimates near the continuous spectrum} \label{sec:Gen1}
\label{Sec:NearSpec_1}
Here and in Section \ref{sec:Gen2} we prove estimates \eqref{ineq:GenC} for the kernels defining the boundary vorticity in \eqref{om_b}; see Appendix \ref{sec:Green}.
 
We treat low and high frequencies the same for $t \geq 1$ by using the stronger $\lambda = \alpha \nu + \delta \alpha^{2/3}\nu^{1/3}$. 
As the treatments of all the kernels are essentially the same, we focus only on $K_1$ and briefly explain the others.
We divide the desired estimates into two contributions based on $\Gamma_\alpha$ (see Section \ref{sec:L2OrrSom}): 
\begin{align}
K_1(y,z) &= \left(\int_{\Gamma_E} + \int_{\Gamma_\alpha \setminus \Gamma_E}\right) \frac{2 e^{-i \al c t}}{\pi D(\al,c)}\bigg(\aaa\bigg)\int_{z}^1 \epsilon^{-2} e^{\al r}H_-(R)drH_+(Z)H_-(Y)d c dr \\
& = K_{1;nc} + K_{1;fc},
\end{align}
where $\Gamma_E = \{c|c_r\in[-1,1], c_i=-\al\nu-\delta\al^{-1/3}\nu^{1/3}\}$ (``nc'' means ``near continous spectrum'' and ``fc'' means ``far from continuous spectrum'').
In this section we estimate $K_{\ast;nc}$, in Section \ref{sec:Gen2} we estimate $K_{\ast;fc}$. 

\noindent
\textbf{Step 1: Weighted estimate $\sup_y||\widetilde{\eta}_\ep K_{1;nc}(y,\cdot)||_{L^1_z}$} \\  
The first step we estimate the kernel corresponding to $\mathcal{R}_f$, dividing into three contributions based on $c_r$
\begin{align}
\ep^{-2}&\int_{-1}^1 \int_{z}^{1} e^{\al r}|H_-(R)H_+(Z)|drdz=\ep^{-2}\int_{-1}^1 \int_{z}^{1} (\mathbf{1}_{z\leq  c_r\leq  r}+\mathbf{1}_{ z\leq  r\leq  c_r}+\mathbf{1}_{c_r\leq  z\leq r})e^{\al r}|H_-(R)H_+(Z)|drdz\nonumber\\
=:&I_1+II_1+III_1.\label{RZ_factor_2} 
\end{align}
We estimate the first term $I_1$ in \eqref{RZ_factor_2}. Applying the estimates on the $H_\pm$ functions \eqref{estimate_of_H_pm} yields
\begin{align*}
I_1 \lesssim& e^\al\int_{\frac{-1- c_r}{\ep}}^{\frac{1- c_r}{\ep}}\int_{\frac{-1- c_r}{\ep}}^{\frac{1- c_r}{\ep}} \frac{1}{\lan Z\ran^{1/4}\lan R\ran^{1/4}}  e^{-\frac{2}{3}Re(Z^{3/2}{e^{i\pi/4}})}e^{-\frac{2}{3}Re(R^{3/2}e^{i\pi/4})}dR_r dZ_r
\lesssim e^\al.  
\end{align*}
Next we estimate $II_1$. Applying Lemma \ref{Gain_in_poly_power} gives 
\begin{align*}
II_1 \lesssim&\int_{\frac{-1- c_r}{\ep}}^{0} \int_{Z_r}^{0} e^{\al}\frac{1}{\lan R\ran^{1/4}\lan Z\ran^{1/4}}e^{\frac{{2}}{3}Re(R^{3/2}e^{i\pi/4})}e^{-\frac{{2}}{3}Re(Z^{3/2}e^{i\pi/4})}dR_rdZ_r\\
\lesssim&\int_{\frac{-1-   c_r}{\ep}}^{0}\frac{e^\al}{\lan Z\ran}dZ\lesssim e^\al \ln \left\lan \frac{1+ |c_r|}{\ep}\right\ran.
\end{align*}
{
Finally, we estimate the $III_1$ term in \eqref{RZ_factor_2} using Lemma \ref{Gain_in_poly_power} and estimate \eqref{estimate_of_H_pm}, 
\begin{align*}
III_1 \lesssim&\int_{\frac{-1-c_r}{\ep}}^{\frac{1-c_r}{\ep}}\frac{e^\al}{\lan Z\ran^{1/4}}\int_{Z_r}^{\frac{1-c_r}{\ep}} \mathbf{1}_{0\leq Z_r\leq R_r} \frac{1}{\lan R\ran^{1/4}}e^{-\frac{{2}}{3}Re(e^{i\pi/4}R^{3/2})}e^{\frac{{2}}{3}Re(e^{i\pi/4}Z^{3/2})}dR_r dZ_r \\
\lesssim& \int_{0}^{\frac{1-c_r}{\ep}}\frac{e^\al}{\lan Z\ran} e^{-\frac{{2}}{3}Re(e^{i\pi/4}Z^{3/2})} e^{\frac{{2}}{3}Re(e^{i\pi/4}Z^{3/2})}dZ_r\lesssim e^{\al}\ln\left\lan\frac{1+|c_r|}{\ep}\right\ran.
\end{align*}}
Combining the estimates of $I_1,II_1,III_1$ and \eqref{RZ_factor_2}, 
with the Evans function estimate \eqref{Evans_function_lower_bound_1}, Airy function estimate \eqref{H_+_int}, and $\ln \ep^{-1}(1+|c|_r)\lesssim \ln \ep^{-1}$ for $c_r\in [-1,1]$, we obtain
\begin{align*}
\sup_{y\in[-1,1]}&||\widetilde{\eta}_\ep(y)K_{1;nc}(y,\cdot)||_{L_z^1}\lesssim {\ep}^{-1} \ln {\ep}^{-1} \sup_{y\in[- 1,1]}\int_{-{1}}^{{1}}\frac{\widetilde{\eta}_\ep(y) e^{-i \al c t}}{ |A_0(\frac{-1-c_r }{\ep}+\delta i)|}|H_-(Y)| d{ c_r}.
\end{align*}
We further decompose the integral above into three regimes
\begin{align}
\sup_{y\in[-1,1]}||\widetilde{\eta}_\ep(y)K_{1;nc}(y,z)||_{L_z^1}\lesssim & {\ep}^{-1} \ln{\ep}^{-1} \sup_{d(y,- 1)\leq \ep^{3/2}}\int_{-{1}}^{{1}}\frac{\widetilde{\eta}_\ep(y) e^{\al c_i t}}{ |A_0(\frac{-1-c_r }{\ep}+\delta i)|}|H_-(Y)| d{ c_r}\nonumber\\
&+\ep^{-1} \ln{\ep}^{-1} \sup_{d(y,-1)\in[\ep^{3/2},\ep]}\int_{-{1}}^{{1}}\frac{\widetilde{\eta}_\ep(y) e^{\al c_i t}}{ |A_0(\frac{-1-c_r }{\ep}+\delta i)|}|H_-(Y)| d c_r\nonumber\\
&+{\ep}^{-1} \ln{\ep}^{-1} \sup_{d(y,- 1)\geq \ep}\int_{-{1}}^{{1}}\frac{\widetilde{\eta}_\ep(y) e^{\al c_i t}}{ |A_0(\frac{-1-c_r }{\ep}+\delta i)|}|H_-(Y)| d{ c_r}\nonumber\\
=:&T_1+ T_2+ T_3.\label{T1T2T3}
\end{align}
The treatment of $T_1$ in \eqref{T1T2T3} is similar to the estimate of $T_{+,1}$ in \eqref{T+1}, \eqref{T+1leq2} in Section \ref{sec:Ksep}.
Recalling the definition of $\wt\eta_\ep$ in \eqref{eta_ep}, the estimates of $A_0$ \eqref{A_0_lower_bound}, and $H_\pm$ \eqref{estimate_of_H_pm}, we estimate $T_1$ as follows 
\begin{align}
T_1\lesssim &\sup_{d(y,-1)\leq \ep^{3/2}}{\ln{\ep}^{-1} \ep^{1/2}}{}\int_{{y}}^{{1}}\frac{e^{\al c_i t}\lan \frac{-c_r-1}{\ep}+\delta i\ran ^{3/4} }{\lan Y\ran ^{1/4}}e^{-\frac{{2}}{3}Re((\frac{- c_r-1}{\ep}+\delta i)^{3/2}e^{i\pi/4})}e^{\frac{{2}}{3}Re(Y^{3/2}e^{i\pi/4})} d{ c_r}\nonumber\\
&+\sup_{d(y,-1)\leq \ep^{3/2}}{\ep^{1/2} \ln{\ep}^{-1}}\int_{-{1}}^{{y}}e^{\al c_i t}e^{-\frac{{2}}{3}Re((\frac{y- c_r}{\ep}+\delta i)^{3/2}e^{i\pi/4})} d{ c_r}\nonumber\\
\lesssim&\ln \ep^{-1}\ep^{1/2}\int_{{y}}^{{1}}e^{\al c_i t}\left\lan \frac{-c_r-1}{\ep}+\delta i\right\ran ^{1/2} d{c_r}+ e^{-\lambda t}\nonumber\\
\lesssim&\ln \ep^{-1} e^{-\lambda t}.\label{T1}
\end{align}
Next we estimate $T_2$ in \eqref{T1T2T3}. The argument is similar to that used to estimate $T_{+,2}$, i.e., \eqref{T2+est}, in Section \ref{sec:Ksep}. Combining  \eqref{A_0_lower_bound} and \eqref{estimate_of_H_pm}, the definition of $\widetilde{\eta}_\ep$ \eqref{eta_ep}, 
we have that $\exists C > 0$ (universal) such that
\begin{align}
T_2 &\lesssim {\ep}^{-1} \ln \ep^{-1}\sup_{y+1\in[\ep^{3/2},\ep]}\left(\frac{y+1}{\ep}\right)^{3}\int_{y}^{1}e^{\al c_i t}\left\lan \frac{-1-c_r}{\ep}+\delta i\right\ran^{1/2}e^{-C^{-1}|\frac{y+1}{\ep}||\frac{-1-c_r}{\ep}|^{1/2}} d{ c_r}\nonumber\\
&\quad\quad+{\ep}^{-1} \ln \ep^{-1}\sup_{y+1\in[\ep^{3/2},\ep]}\left(\frac{y+ 1}{\ep}\right)^{3}\int_{-1}^{y}e^{\al c_i t}e^{-\frac{{2}}{3}Re((\frac{y- c_r}{\ep}+\delta i)^{3/2}e^{i\pi/4})} d{c_r}\nonumber\\
&\lesssim \ln \ep^{-1}\sup_{y+1\in[\ep^{3/2},\ep]} \int_{y}^{1}e^{\al c_i t}\left\lan \frac{(1+c_r)(y+1)^2}{\ep^{3}}\right\ran^{1/2}e^{-C^{-1}|\frac{y+1}{\ep}||\frac{1+c_r}{\ep}|^{1/2}} \frac{(y+1)^2}{\ep^{3}}dc_r 
+\ln \ep^{-1}e^{-\lambda t} \nonumber\\
&\lesssim \ln \ep^{-1}e^{-\lambda t}.\label{T2}
\end{align}
Next we estimate the term $T_3$ in \eqref{T1T2T3}, which are similar to those made on $T_{\pm,3}$ in \eqref{T+123T-123}.
Dividing into three contributions:
\begin{align}
T_3  & = {\ep}^{-1} \ln{\ep}^{-1} \sup_{d(y,- 1)\geq \ep}\int_{-{1}}^{{1}}\left(\mathbf{1}_{-1 \leq c_r \leq y} (\mathbf{1}_{\abs{c_r - y} \leq \frac{y+1}{2}} + \mathbf{1}_{\abs{c_r - y} > \frac{y+1}{2}}) + \mathbf{1}_{c_r \geq y} \right) \frac{\widetilde{\eta}_\ep(y) e^{\al c_i t}}{ |A_0(\frac{-1-c_r }{\ep}+\delta i)|}|H_-(Y)| d{ c_r}\nonumber\\
& = :T_{31} + T_{32} + T_{33}.  
\end{align}
By \eqref{A_0_lower_bound} and \eqref{estimate_of_H_pm}, the definition of  $\widetilde{\eta}_\ep$ \eqref{eta_ep},  that $|c_r -y|\leq \frac{y+1}{2}$ implies that $|c_r+1|\geq\frac{y+1}{2}$, we have (choosing also $c_\star$ small), 
\begin{align*}
T_{31} 
\lesssim&{\ep}^{-1} \ln{\ep}^{-1} \sup_{d(y,\pm 1)\geq \ep} e^{c_\star \frac{(y+1)^{3/2}}{\ep^{3/2}}}\int_{-1}^{y}\mathbf{1}_{|c_r+1|\geq\frac{y+1}{2}}e^{\al c_i t}\left\lan\frac{-1-c_r}{\ep}\right\ran^{3/4} e^{-\frac{{2}}{3}Re((\frac{- c_r-1}{\ep}+\delta i)^{3/2}e^{i\pi/4})} d{c_r}\\
\lesssim & \ln \ep^{-1} e^{-\lambda t}. 
\end{align*}
Using $Y_r=\frac{y-c_r}{\ep}\geq \frac{y+1}{2\ep}$, the lower bound $|A_0|\gtrsim 1$ \eqref{A_0_lower_bound} and \eqref{estimate_of_H_pm}, we have (choosing $c_\star$ small),  
\begin{align*}
T_{32}
\lesssim& {\ep}^{-1} \ln \ep^{-1}\sup_{d(y,\pm 1)\geq\ep}e^{c_\star \frac{(y+1)^{3/2}}{\ep^{3/2}}}\int_{-1}^{y}e^{\al c_i t}e^{-C^{-1}(\frac{y+1}{\ep})^{\frac{3}{2}}}e^{-\frac{1}{3}Re (Y^{3/2} e^{i\pi/4})} d{c_r}  \lesssim  \ln \ep^{-1}e^{-\lambda t}. 
\end{align*}
For $T_{33}$, since $c_r\geq y$, we have $|-1-c_r|\geq y+1$. We apply the same argument used to treat $T_{+,3}$ in \eqref{T+123T-123} in Section \ref{sec:Ksep}.
Combining  \eqref{A_0_lower_bound}, \eqref{estimate_of_H_pm}, \eqref{eta_ep} and choosing $c_\star$ completes the desired estimate, which is omitted for the sake of brevity.  
Combining the estimate of $T_3$ with \eqref{T1T2T3}, \eqref{T1} and \eqref{T2} yields  \eqref{K_1_sup_x_L_1_z_weighted}.

\noindent
\textbf{Step 2: Weighted estimate $\sup_{z\in[-1,1]}||\widetilde{\eta}_\ep K_{1;nc}(\cdot,z)||_{L^1_y}$} \\  
As above, we estimate the contribution associated to $\mathcal{R}_f$ first. Subdividing into the three natural contributions: 
\begin{align}
\ep^{-2}\int_{z}^1 e^{\al r}H_-(R)dr H_+(Z)= &\ep^{-2}\int_z^1 (\mathbf{1}_{z\leq  c_r\leq  r} +\mathbf{1}_{z\leq  r\leq  c_r} +\mathbf{1}_{c_r\leq  z\leq  r}) e^{\al r}H_-(R)dr H_+(Z)\nonumber\\
=: & I_2+II_2+III_2.\label{factor_RZ}
\end{align}
We estimate the term $I_2$ in \eqref{factor_RZ}. Since the function $H_+(Z)$ is small when $ Z_r\leq 0$ and $H_-(R)$ is small when $ R_r\geq 0$, this term is small. Indeed, applying \eqref{estimate_of_H_pm} implies 
\begin{align*}
I_2
\lesssim&{\ep^{-1}} e^\al\int_{0}^{\frac{1-c_r}{\ep}} \frac{ \mathbf{1}_{z\leq  c_r\leq  r} }{\lan Z\ran^{1/4}\lan R\ran^{1/4}} e^{-\frac{2}{3}Re(Z^{3/2}{e^{i\pi/4}})}e^{-\frac{2}{3}Re(R^{3/2}e^{i\pi/4})}dR_r\\
\lesssim& \ep^{-1}e^\al \frac{1}{\lan Z\ran^{1/4}}e^{-\frac{{2}}{3}Re(Z^{3/2}e^{i\pi/4})}\lesssim \ep^{-1}e^\al \frac{1}{\lan Z\ran}.
\end{align*}
For $II_2$ we apply Lemma \ref{Gain_in_poly_power}: 
\begin{align*}
II_2
\lesssim &\ep^{-1}\frac{e^\al}{\lan Z\ran^{1/4}}\int_{Z_r}^{0} \frac{\mathbf{1}_{z\leq  r\leq  c_r}}{\lan R\ran^{1/4}}e^{-\frac{{2}}{3}Re(R^{3/2}e^{-3i\pi/4})}dR_r e^{-\frac{{2}}{3}Re(Z^{3/2}e^{i\pi/4})}\lesssim\ep^{-1}e^{\al}\frac{1}{\lan Z\ran}.
\end{align*}
The estimate on $III_2$ is similar and is omitted for brevity. 
Combining the above with \eqref{factor_RZ}, we have 
\begin{align}
\sup_{z\in[-1,1]}&||\widetilde{\eta}_\ep K_{1;nc}(y,z)||_{L_y^1}\nonumber\\
\lesssim &e^{-\lambda t}\sup_{z\in[-1,1]}\ep^{-1}e^{\al}&\norm{\int_{-1}^1\frac{\widetilde{\eta}_\ep}{| D(\al,c)|\lan Z\ran}\bigg(\aaa\bigg)\abs{H_-(Y)}dc_r}_{L_y^1}. \label{K_1_sup_z_L_1_y_weighted_1}
\end{align} 
Dividing the $y$ integral into $d(y,\pm 1) \leq \eps^{3/2}$, $d(y,\pm 1) \in (\eps^{3/2},\eps)$, and $d(y,\pm1) \geq \eps$ allows to apply the same method
we used to prove \eqref{K_1_sup_x_L_1_z_weighted}. The details are omitted for brevity.

\textbf{Comments on remaining terms: } \\ 
The first four terms, $K_1,..,K_4$ are roughly analogous.
To treat the latter four terms, one needs to take advantage of the fact that $Ai(y)=\overline{Ai(\overline{y})}$ to rewrite $H_+(Y)=Ai(e^{5\pi i/6}Y)$ as $\overline{Ai(e^{- 5\pi i/6}\overline{Y})}=\overline{Ai\left(e^{\pi i/6}\left(\frac{c_r-y}{\ep}+\delta i\right)\right)}$. Note also the inequality
\begin{align}
e^{\frac{2}{3}Re(e^{i\pi/4}(\frac{c_r-y}{\ep}+\delta i)
^{3/2})}\lesssim e^{\frac{2}{3}Re(e^{i\pi/4}(\frac{c_r-1}{\ep}+\delta i)^{1/2}(\frac{c_r-y}{\ep}+\delta i))},\quad c_r-1\leq c_r-y\leq 0 .
\end{align}
Using these observations, treating the last four terms is then analogous to the first four; the details are omitted for brevity.

\section{$L^2$ case: Resolvent estimates in the tails and connection zones} \label{sec:Gen2}
\subsection{General set-up of the tail  and connection zones ($t\geq 1$)}
For $t\ geq 1$, as in Section \ref{sec:Gen1}, we use $\lambda = \alpha^2 \nu + \delta \alpha^{2/3}\nu^{1/3}$. 
Recall from Section \ref{sec:L2OrrSom} that the contours defining the $K_j$ in \eqref{om_b} are chosen to depend on $y$. Indeed, if $d(y,\pm 1) \geq \ep^{3/2}$ we use the straight contour
\begin{align}
\Gamma_{\mathrm{str}}=\{c|c_i\equiv-\al\nu - \delta \alpha^{-1/3}\nu^{1/3}\}. 
\end{align}
To treat these contributions one then deduces
\begin{align}
\sup_{y\in[-1 + \eps^{3/2},1 - \eps^{3/2}]}||\eta_\ep(y) K_{j;fc}(y,\cdot)||_{L_z^1} & \lesssim \ln\ep^{-1} e^{-\lambda t}, \\ 
\sup_{z\in[-1,1]}||\eta_\ep(\cdot)K_{j;fc}(\cdot,z)||_{L_y^1(d (y,\pm 1) \geq \eps^{3/2}) }& \lesssim \ln \ep^{-3/2}  \ln \ln \ep^{-1} e^{-\lambda t}.
\end{align}
These estimates are roughly analogous to that performed in Section \ref{sec:Gen1}, and we omit the details for the sake of brevity. 

If $\mathrm{d}(y,\pm 1)\leq \ep^{3/2}$, i.e. the core of the boundary layer, then we further adjust the contour.
We first define the connection region $\Gamma_{co}^\pm$:
\begin{align}
\Gamma_{co}^+=&\cup_{j=1}^2\Gamma_j^+;\\
\Gamma_{co}^-=&\cup_{j=1}^2\Gamma_j^-,
\end{align}
(the subscript `$co$' denotes `connection') where the $\Gamma_j^\pm$ sub-segments are defined as follows for an angle $\theta$ such that  $\abs{\tan \theta} < \frac{1}{100}$, 
\begin{align*}
\Gamma_1^+=&\{c|c_r\in[1,200], c_i\equiv-\al\nu-\delta \al^{-1/3}\nu^{1/3}\};\\
\Gamma_1^-=&\{c|c_r\in[-200,-1], c_i\equiv-\al\nu-\delta\al^{-1/3}\nu^{1/3}\};\\
\Gamma_2^+=&\{c| c_i+\al\nu+\delta\al^{-1/3}\nu^{1/3}=-\tan(\theta)(c_r-200)_+, \quad c_r\in [200,-1000\ln (\al^{-1}\nu)]\};\\
\Gamma_2^-=&\{c|c_i+\al\nu+\delta\al^{-1/3}\nu^{1/3}=-\tan(\theta)(-c_r-200)_+, \quad c_r\in[1000\ln (\al^{-1}\nu),-200]\}.
\end{align*}
Note the $\Gamma_j^\pm$ are straight lines. Next we define the tail part of the contour:
\begin{align*}
\Gamma_t^+=&\{c| c_i+\al\nu+\delta\al^{-1/3}\nu^{1/3}=-\tan(\theta)(c_r-200)_+, \quad c_r\in [-1000\ln (\al^{-1}\nu),\infty)\};\\
\Gamma_t^-=&\{c|c_i+\al\nu+\delta\al^{-1/3}\nu^{1/3}=-\tan(\theta)(-c_r-200)_+, \quad c_r\in(-\infty,1000\ln (\al^{-1}\nu)]\}.
\end{align*}
Here the subscript `$t$' denotes the `tail' and the superscripts `$+,-$' denote the upper half and lower half of the tails, respectively.
Recall the sketch of the contour in Figure \ref{Fig:Contour} in Section \ref{sec:L2OrrSom}. 

We next sketch how to estimate the resolvent in the connection and tail regions. We first consider the $t\geq1$ case.
First, since the $\Gamma_1^\pm$ regions are close to the continuous spectrum of the Euler equation, we can estimate the resolvent in the same way in Section \ref{Sec:NearSpec_1}; hence these contributions are omitted. For the second piece, $\Gamma_2^\pm$ of $\Gamma_{co}$, we choose the length so that the loss is essentially logarithmic (see e.g. \eqref{ineq:lnln} below). In the tail region $\Gamma_t^\pm$,  the factor $e^{\al c_i t}$ in the resolvent provides integrability in $c$ for $t\geq 1$.

For the initial time layer, i.e., $t\leq 1$, we choose again the straight contour $\Gamma_{\al}:=\{c|c_i\equiv -\al\nu-\delta\al^{-1/3}\nu^{1/3}\}$. Here, we use the weaker weight $\widetilde\eta_\ep^{in}$ \eqref{weight_eta_ep} to compensate for the loss of integrability as $c$ approaches infinity in the boundary layer contribution.
Away from the boundary layer, the estimates proceed as in the case $t \geq1$.

In the remainder of the section, we provide a few more details on the above sketch. 

\subsection{Connection and tail estimates}\label{Sec:connection_tail}
Here we focus on the case $d(y,\pm 1) \leq \eps^{3/2}$.
We decompose the proof into three steps: $\Gamma^{\pm}_1$, $\Gamma^{\pm}_2$, $\Gamma_t^{\pm}$, that is we write
\begin{align}
K_{1;fc}(y,z) & = \sum_{+,-} \left(\int_{\Gamma_1^{\pm}} + \int_{\Gamma_2^{\pm}} + \int_{\Gamma_t^{\pm}}\right)  \\ & \quad \quad \times \frac{ 2 e^{-i \al c t}}{\pi D(\al,c)}\bigg(\aaa\bigg)\int_{z}^1 \epsilon^{-2} e^{\al r}H_-(R)drH_+(Z)H_-(Y)d c dr \\
& =\sum_{+,-}\left( K^{\pm}_{1,1} + K^{\pm}_{1,2} + K^{\pm}_{1,t}\right). 
\end{align}
As in Section \ref{sec:Gen1}, we only treat the case $K_1$; the other kernels are similar and are omitted for the sake of brevity. 

\noindent
\textbf{Step 1: Estimate on $K_{1,1}^{\pm}$.}\\ 
Recall that we consider $d(y,\pm 1)\leq \ep^{3/2}$, hence $\widetilde\eta_\ep\equiv\ep^{3/2}$.
As a result, it suffices to prove: 
\begin{align}\label{K_11_Schur}
\frac{\ep^{3/2}}{\ln \ep^{-1}}\sup_{d(y,\pm1)\leq \ep^{3/2}}||K^\pm_{1,1}(y,\cdot)||_{L_z^1}+ \frac{1}{\ln \ep^{-1}}\sup_{z\in[-1,1]}||K^\pm_{1,1}(\cdot,z)||_{L^1_y(d(y,\pm 1) \leq \ep^{3/2})}\lesssim 1.
\end{align}
As in Section \ref{sec:Gen1}, the first step is find estimates corresponding to the free resolvent.
In this case, these are: $\forall c\in \Gamma_{co}\cup \Gamma_t$,
\begin{align}
&\epsilon^{-2}\int_{-1}^1\int_{z}^1  e^{\al r}|H_-(R)||H_+(Z)|drdz\lesssim e^\al\left\lan\frac{1+|c_r|}{\ep}\right\ran;\label{FRz}\\
&\ep^{-2}\int_{z}^1 e^{\al r}|H_-(R)| |H_+(Z)|dr\lesssim\frac{e^\al}{\lan Z\ran}.\label{FRy}
\end{align}
After replacing Lemma \ref{Gain_in_poly_power} by Lemma \ref{lem:Connection_Zone_Gain_in_poly_power},  the proofs of \eqref{FRz} and \eqref{FRy} are similar to the estimates of $I_1,I_2, I_3$ and $II_1,II_2, II_3$ terms in \eqref{RZ_factor_2} and \eqref{factor_RZ}. Hence, the arguments are omitted for the sake of brevity.
The remaining estimates required  for \eqref{K_11_Schur} are essentially the same as that found in Section \ref{sec:Gen1} and are hence omitted for the sake of brevity. 

\noindent
\textbf{Step 2: Estimate on $K_{1,2}^{\pm}$.}\\
For simplicity, fix $K_{1,2}^+$; the case of $K_{1,2}^-$ is analogous. 
First note the following, 
\begin{equation}
\frac{1}{\lan Y\ran^{1/4}}e^{\frac{2}{3}Re(Y^{3/2}e^{i\pi/4})}\lesssim \frac{e^{\frac{2}{3}Re(( \frac{-1-c_r}{\ep}+\frac{-c_i-\al\nu}{\ep}i)^{3/2}e^{i\pi/4})}}{\lan \frac{-1-c_r}{\ep}+\frac{-c_i-\al\nu}{\ep}i\ran^{1/4}},\quad\forall c\in \Gamma_2^+.\label{Mono_Q}
\end{equation}
This is proved by using monotonicity in the exponent and applying $ \lan Y\ran\approx\left\lan \frac{-1-c_r}{\ep}+\frac{-c_i-\al\nu}{\ep}i\right\ran$ for $c\in \Gamma_2^+$.
As in \eqref{K_11_Schur}, it suffices to prove
\begin{align}
\frac{\ep^{3/2}}{\ln ^{{3/2}}\ep^{-1} \ln \ln \eps^{-1}}\sup_{d(y,\pm1)\leq \ep^{3/2}}||K^\pm_{1,1}(y,\cdot)||_{L_z^1} & \nonumber \\ \quad +  \frac{1}{\ln \ep^{-1}}\sup_{z\in[-1,1]}||K^\pm_{1,1}(\cdot,z)||_{L^1_y(d(y,\pm 1) \leq \ep^{3/2})} & \lesssim \lan\al\ep\ran. \label{K12_Schur} 
\end{align}
The proof is similar to the arguments in Section \ref{sec:Gen1} with some minor modifications.
Consider simply the first term, which is the one that requires a more different treatment. 

Combining \eqref{FRz}, \eqref{A_0_lower_bound}, \eqref{Mono_Q}, Lemma \ref{lem:Evans_function_est} and Lemma \ref{lem:Connection_Zone_Gain_in_poly_power}, we obtain that
\begin{align}
&\sup_{ \mathrm{dist}(y,\pm 1)\leq\ep^{3/2}}||K_{1,2}^+(y,z)||_{L_z^1} \nonumber \\
&\leq\sup_{ \mathrm{dist}(y,\pm 1)\leq\ep^{3/2}} \int_{{200}}^{{-3000\ln\ep}}\frac{e^\alpha}{ e^\al|A_0(\frac{-1-c_r }{\ep}+\frac{-c_i(c_r)-\al\nu}{\ep} i)|}|H_-(Y)|\ln\left\lan \frac{ |c_r|+1}{\ep}\right\ran {\ep}^{-1}d{ c_r} \nonumber \\
 &\lesssim\sup_{ \mathrm{dist}(y,\pm 1)\leq\ep^{3/2}}\int_{{200}}^{-{3000\ln\ep}}\frac{\lan \frac{-c_r-1}{\ep}+\frac{-c_i-\al\nu}{\ep} i\ran ^{3/4} }{\lan Y\ran^{1/4}}\nonumber \\
 &\quad\quad\quad\quad\quad\quad\quad\quad\quad\times e^{-\frac{{2}}{3}Re((\frac{- c_r-1}{\ep}+\frac{-c_i-\al\nu}{\ep} i)^{3/2}e^{i\pi/4})}e^{\frac{{2}}{3}Re(Y^{3/2}e^{\pi i/4})}\ln\left\lan \frac{ |c_r|+1}{\ep}\right\ran {\ep}^{-1}d{ c_r}\nonumber \\
&\lesssim \int^{{-3000\ln \ep}}_{{200}}\brak{\frac{-c_r-1}{\ep}+\frac{-c_i-\al\nu}{\ep} i}^{1/2} \ln\left\lan \frac{ |c_r|+1}{\ep}\right\ran{\ep}^{-1} d{ c_r} \nonumber \\
&\lesssim \left(\frac{-\ln \ep}{\ep}\right)^{3/2}\ln\ln{\ep}^{-1}, \label{ineq:lnln}
\end{align} 
This is consistent with \eqref{K12_Schur}.

\textbf{Step 3: tail region $\Gamma_t^\pm$  estimates.} \\ 
Since the tail contour $\Gamma_t^\pm$ has the same slope as the connection contour $\Gamma_2^\pm$, the main estimates are similar and so are omitted for the sake of brevity.
The primary gain here comes from the $e^{\al c_i t}$.  
The easest way to use this is to note that for $t\geq 1$, 
\begin{align*} 
e^{\al c_i t}\leq e^{\al c_i}\lesssim e^{15\al\ln \ep}e^{-\frac{\al c_i}{2}}\lesssim\ep^{15\al}e^{-\frac{\al |c_r|}{200}}.
\end{align*}
Combining this with the arguments in  $\Gamma_2^{\pm}$ and are hence omitted for the sake of brevity. 

\subsection{Initial time layer estimate $t\leq 1$}
In this subsection, we estimate the semigroup in the initial time layer.
As in Section \ref{sec:Ksep}, we treat the high modes $\alpha^2 \nu \geq C_0$ differently from the low modes $\alpha^2 \nu < C_0$.
We will focus on the low mode treatment $\alpha^2 \nu < C_0$; the high mode treatments are done analogously using the same shift as in Section \ref{sec:Ksep} and we  hence omit this argument for the sake of brevity. 

Recall \eqref{om_b} and the choice $\Gamma_\al=\{c|c_i=-(1-\kappa)\al\nu-\delta\al^{-1/3}\nu^{1/3}\}$ for $t\leq 1$.
The problem is for $\abs{c_r} \gtrsim 1$ and hence we divide the kernels as follows: 
\begin{align}
K_1(y,z) & = \left(\int_{-10}^{10} + \int_{10}^\infty + \int_{-\infty}^{-10} \right)  \\ & \quad \quad \times e^{-i \al c t}\frac{ 2}{\pi D(\al,c)}\bigg(\aaa\bigg)\int_{z}^1 \epsilon^{-2} e^{\al r}H_-(R)drH_+(Z)H_-(Y)d c_r dr \\
& =: K_{1,E}^{in} + K_{1}^{in,+} + K_{1}^{in,-};
\end{align}
we sketch only the case $K_1$; the other $K_j$ are analogous.  

To estimate the initial time-layer via Schur's test, it suffices to deduce 
\begin{align}\label{Initial_larer_Lp_goal}
\sup_{y \in [-1,1]}||\widetilde\eta_\ep^{in}K_1^{in,\pm}(y,\cdot)||_{L_z^1}+\sup_{z \in [-1,1]}||\widetilde\eta_\ep^{in}K_1^{in,\pm}(\cdot,z)||_{L_y^1}\lesssim 1.
\end{align}
To see why the vanishing weight $\eta_\ep^{in}$ translates to a gain, we treat as an example $\sup_{y \in [-1,1]}||\eta_{\ep}^{in}K_1^{in,+}(y,\cdot)||_{L_z^1}$; the other estimates are similar or easier. 
Using \eqref{weight_eta_ep}, \eqref{A_0_lower_bound}, Lemma \ref{lem:Evans_function_est}, and Lemma \ref{Gain_in_poly_power} we deduce 
\begin{align*}
\sup_y&||\widetilde\eta_\ep^{in}K_{1}^{in,+}(y,z)||_{L_z^1}\\
\leq&\left(\sup_{y+1\leq \ep}+\sup_{y+1\geq \ep}\right)\frac{\widetilde\eta_\ep^{in}(y)}{\eps}\int_{{10}}^{\infty}\frac{ e^\alpha}{ e^\al|A_0(\frac{-1-c_r }{\ep}+\delta i)|}|H_-(Y)|\ln\left\lan \frac{ |c_r|+1}{\ep}\right\ran d c_r\\
\lesssim &\sup_{y+1\leq \ep}\frac{\widetilde\eta_\ep^{in}(y)}{\eps} \int_{{10}}^{\infty}\frac{\lan \frac{-c_r-1}{\ep}+\delta i\ran ^{3/4} }{\lan Y\ran^{1/4}}e^{-\frac{{2}}{3}Re((\frac{- c_r-1}{\ep}+\delta i)^{3/2}e^{i\pi/4})}e^{\frac{{2}}{3}Re(Y^{3/2}e^{\pi i/4})}\ln\left\lan \frac{ |c_r|+1}{\ep}\right\ran d{ c_r}+ 1\\
\lesssim &\sup_{y+1\leq \ep}\left(\frac{y+1}{\ep}\right)^3\int_{{10}}^{\infty}{\left\lan \frac{-c_r-1}{\ep}+\delta i\right\ran ^{1/2} }e^{\frac{{2}}{3}Re(\frac{y+1}{\ep}(\frac{- c_r-1}{\ep}+\delta i)^{1/2}e^{i\pi/4})}\ln\left\lan \frac{ |c_r|+1}{\ep}\right\ran {\ep}^{-1}d{ c_r}+1\\
\lesssim &1 .
\end{align*}
The remaining estimates are analogous and are hence omitted for the sake of brevity.





\appendix
\section{Technical lemmas}\label{sec:Appendix}
\subsection{The free evolution} \label{sec:Free}
Let us briefly recall the properties of the initial value problem 
\begin{align}
&\partial_t \omega + y \partial_x \omega = \nu \Delta \omega, \\ 
&\omega(0) = \omega_{in}, \\ 
& u = \grad^{\perp}\Delta^{-1}\omega. 
\end{align}
Via Fourier transform in both variables $(x,y) \mapsto (\alpha,\eta)$ we derive (see \cite{BMV14} for a modern treatment), 
\begin{align}
\widehat{\omega}(t,\alpha,\eta) & = \widehat{\omega}_{in}(\alpha,\eta + \alpha t) \exp \left( -\nu \alpha^2 t - \nu\int_0^t\abs{\eta + \alpha(t-\tau)}^2 d\tau \right). \label{ineq:ffree}
\end{align}
The following lemma quantifies the enhanced dissipation and inviscid damping. 
\begin{lemma} \label{lem:FreeDecay}
There holds the pointwise-in-time enhanced dissipation estimate
\begin{align}
\norm{\widehat{\omega}(t,\alpha,\cdot)}_{L^2_y} \lesssim e^{-\alpha^2 \nu t - \frac{1}{12} \nu \alpha^2 t^3} \norm{\widehat{\omega}_{in}(\alpha,\cdot)}_{L^2_y},  \label{ineq:FreePtWsDec}
\end{align}
there holds the time-averaged inviscid damping and enhanced dissipation 
\begin{align}
\norm{e^{\nu \alpha^2 t+ \frac{1}{12}\nu \alpha^2 t^3} \widehat{u}(t,\alpha,\cdot) }_{L^2_t L^2_y} \lesssim \frac{1}{\abs{\alpha}^{1/2}}\norm{\widehat{\omega}_{in}(\alpha,\cdot)}_{L^2_y}, \label{ineq:VelDecayFree}
\end{align}
and, more generally, the following $L^2_t H^{-s}_y$ estimate for $s > 1/2$: 
\begin{align}
\norm{e^{\nu \alpha^2 t+ \frac{1}{12}\nu \alpha^{2} t^3} \widehat{\omega}(t,\alpha,\cdot)}_{L^2_t H^{-s}_y} \lesssim \frac{1}{\abs{\alpha}^{1/2}} \norm{\widehat{\omega}_{in}(\alpha,\cdot)}_{L^2}. \label{ineq:H-s}
\end{align} 
\end{lemma}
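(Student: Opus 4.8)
The plan is to read all three estimates off the explicit solution formula \eqref{ineq:ffree}. First I would evaluate the Gaussian exponent: substituting $s=t-\tau$ and completing the square,
\[
\int_0^t \abs{\eta+\alpha(t-\tau)}^2\,d\tau=\int_0^t(\eta+\alpha s)^2\,ds=\eta^2 t+\alpha\eta t^2+\tfrac13\alpha^2 t^3=t\Big(\eta+\tfrac{\alpha t}{2}\Big)^2+\tfrac{1}{12}\alpha^2 t^3,
\]
so that
\[
\widehat{\omega}(t,\alpha,\eta)=\widehat{\omega}_{in}(\alpha,\eta+\alpha t)\,\exp\Big(-\nu\alpha^2 t-\tfrac{1}{12}\nu\alpha^2 t^3-\nu t\big(\eta+\tfrac{\alpha t}{2}\big)^2\Big).
\]
This identity splits the solution into the enhanced-dissipation factor $e^{-\nu\alpha^2 t-\frac{1}{12}\nu\alpha^2 t^3}$, a harmless Gaussian $\le 1$, and the measure-preserving shift $\eta\mapsto\eta+\alpha t$; each inequality then follows by reading off the appropriate consequence. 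For \eqref{ineq:FreePtWsDec} I would take the $L^2_\eta$ norm, bound $e^{-\nu t(\eta+\alpha t/2)^2}\le 1$, use that the shift is measure-preserving, and apply Plancherel in $y\leftrightarrow\eta$.

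For \eqref{ineq:H-s} (and, with the Biot--Savart symbol in place of $\brak{\eta}^{-2s}$, for \eqref{ineq:VelDecayFree}) the key point is that multiplying $\abs{\widehat{\omega}}^2$ by the weight $e^{2\nu\alpha^2 t+\frac16\nu\alpha^2 t^3}$ cancels the enhanced-dissipation factor identically, leaving only $e^{-2\nu t(\eta+\alpha t/2)^2}\le 1$. Hence
\[
\int_0^\infty e^{2\nu\alpha^2 t+\frac16\nu\alpha^2 t^3}\norm{\widehat{\omega}(t,\alpha,\cdot)}_{H^{-s}_y}^2\,dt\le\int_0^\infty\!\!\int_{\Real}\brak{\eta}^{-2s}\abs{\widehat{\omega}_{in}(\alpha,\eta+\alpha t)}^2\,d\eta\,dt.
\]
Changing variables $\xi=\eta+\alpha t$ for fixed $t$ and using Tonelli to exchange the order of integration reduces matters to the uniform-in-$\xi$ bound
\[
\int_0^\infty\brak{\xi-\alpha t}^{-2s}\,dt=\frac{1}{\abs{\alpha}}\int_{-\infty}^{\xi}\brak{\sigma}^{-2s}\,d\sigma\le\frac{C_s}{\abs{\alpha}},\qquad s>\tfrac12,
\]
which yields \eqref{ineq:H-s}. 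For \eqref{ineq:VelDecayFree} one uses $\abs{\widehat{u}(t,\alpha,\eta)}^2=(\alpha^2+\eta^2)^{-1}\abs{\widehat{\omega}(t,\alpha,\eta)}^2$ and the analogous computation $\int_0^\infty(\alpha^2+(\xi-\alpha t)^2)^{-1}\,dt=\abs{\alpha}^{-1}\int_{-\infty}^{\xi}(\alpha^2+\sigma^2)^{-1}\,d\sigma\le\pi\alpha^{-2}\le\pi\abs{\alpha}^{-1}$ (using $\abs{\alpha}\ge 1$), which is in fact slightly stronger than the stated $\abs{\alpha}^{-1/2}$.

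There is no genuine obstacle here: the whole argument is bookkeeping on the explicit Gaussian. The two places to be careful are the completion of the square, which produces the sharp constant $\tfrac{1}{12}$ and makes the enhanced-dissipation weight cancel the bulk decay exactly, and the final $t$-integral, whose convergence requires precisely $s>\tfrac12$. This last step is where the $L^2_t$ inviscid-damping gain comes from, entirely through the algebraic decay that the frequency translation $\eta\mapsto\eta+\alpha t$ converts into decay in $t$.
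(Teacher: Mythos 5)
Your proposal is correct and follows essentially the same route as the paper: read all three bounds off the explicit formula \eqref{ineq:ffree}, bound the Gaussian factor by one, and convert the frequency shift $\eta\mapsto\eta+\alpha t$ into $t$-integrability via Fubini, using $s>1/2$ to get the $\abs{\alpha}^{-1}$ gain. The only (immaterial) difference is that the paper deduces \eqref{ineq:VelDecayFree} from \eqref{ineq:H-s}, whereas you compute directly with the Biot--Savart symbol $(\alpha^2+\eta^2)^{-1}$, which gives the same (in fact slightly stronger) conclusion.
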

\begin{proof}
Estimate \eqref{ineq:FreePtWsDec} is immediate from the formula \eqref{ineq:ffree}. 
Estimate \eqref{ineq:VelDecayFree} follows from \eqref{ineq:H-s}. Finally, we observe by \eqref{ineq:ffree}, 
\begin{align*}
\int_0^\infty \int_{-\infty}^\infty e^{2\alpha^2 \nu t + \frac{1}{6}\nu \alpha^2 t^3} \brak{\eta}^{-2s} \abs{\widehat{\omega}(t,\alpha,\eta)}^2 d\eta & \lesssim \int_0^\infty \int_{-\infty}^\infty \brak{\eta}^{-2s} \abs{\widehat{\omega}_{in}(\alpha,\eta+ \alpha t)}^2 d\eta dt \\
& \lesssim \int_{-\infty}^\infty  \left( \int_0^\infty \brak{\eta - \alpha t}^{-2s} dt\right) \abs{\widehat{\omega}_{in}(\alpha,\eta)}^2 d\eta \\
& \lesssim \frac{1}{\abs{\alpha}} \norm{\omega_{in}}_{L^2}^2, 
\end{align*}
which completes the proof of \eqref{ineq:H-s}. 
\end{proof}

\subsection{Airy functions estimates} \label{sec:Airy}
Recall the definition of the homogeneous solutions of the Orr-Sommerfeld problem in terms of Airy functions \eqref{def:HYetc}. 
Standard asymptotics for the Airy functions gives the following

\begin{lemma}\label{lem:estimate_of_Airy}
  The homogeneous solutions $H_-(z)$ and $H_+(z)$ satisfy the following for $z=z_r+\delta i \subset \set{z \in \Complex : 0 < z_i  < K + \frac{1}{100}\abs{z_r}}$,  
\begin{subequations}\label{estimate_of_H_pm}
\begin{align}
|H_-(z)|\lesssim_K & \frac{1}{\lan z\ran^{1/4}}e^{-\frac{{2}}{3}Re(e^{i\pi/4}z^{3/2})}, \quad  z_r\geq 0;\\
|H_-(z)|\lesssim_K & \frac{1}{\lan z\ran^{1/4}}e^{\frac{{2}}{3}Re(e^{i\pi/4}z^{3/2})}, \quad z_r\leq 0;\\
|H_+(z)|\lesssim_K & \frac{1}{\lan z\ran^{1/4}}e^{\frac{{2}}{3}Re(e^{i\pi/4}z^{3/2})},\quad  z_r\geq 0;\\
|H_+(z)|\lesssim_K & \frac{1}{\lan z\ran^{1/4}}e^{-\frac{{2}}{3}Re(e^{i\pi/4}z^{3/2})}, \quad  z_r\leq 0.
\end{align}
\end{subequations}
If $z\in\{z\in\mathbb{C}|z_i<0\}$, then the solutions $H_-(z)$ and $H_+(z)$ satisfy the following
\begin{subequations}\label{estimate_of_H_pm_high_mode}
\begin{align}
|H_-(z)|\lesssim_K & \frac{1}{\lan z\ran^{1/4}}e^{-\frac{{2}}{3}Re(e^{i\pi/4}z^{3/2})}, \quad  z_r\in \rr;\\
|H_+(z)|\lesssim_K & \frac{1}{\lan z\ran^{1/4}}e^{\frac{{2}}{3}Re(e^{i\pi/4}z^{3/2})}, \quad  z_r\in\rr.
\end{align}
\end{subequations}
\end{lemma}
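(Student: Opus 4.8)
The plan is to deduce all six bounds from the classical one-term asymptotic expansion of the Airy function, the real work being to keep track of the branch cut \eqref{eq:Branch} that the rotations $e^{i\pi/6},e^{5i\pi/6}$ in \eqref{def:HYetc} introduce. Recall (see e.g.\ \cite{DR81}) that for every fixed $c_0\in(0,\pi)$ one has
\begin{align*}
Ai(\zeta)=\frac{1}{2\sqrt{\pi}}\,\zeta^{-1/4}e^{-\frac23\zeta^{3/2}}\bigl(1+O(|\zeta|^{-3/2})\bigr)
\end{align*}
uniformly for $\zeta$ in the closed sector $\abs{\ph\zeta}\le\pi-c_0$; in particular $\abs{Ai(\zeta)}\lesssim_{c_0}\abs{\zeta}^{-1/4}e^{-\frac23 Re(\zeta^{3/2})}$ there once $\abs{\zeta}\ge 1$, with $\zeta^{3/2}$ understood via \eqref{eq:Branch}.

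First I would dispose of the bounded region. Fix $R=R(K)\ge 1$ large, to be chosen below. On the compact set $\{z:\ 0<z_i<K+\tfrac1{100}\abs{z_r},\ \abs z\le R\}$ — respectively on $\{z:\ z_i<0,\ \abs z\le R\}$ for \eqref{estimate_of_H_pm_high_mode} — the entire functions $H_\pm$ are bounded, while each claimed right-hand side is bounded below by a positive constant, since $\langle z\rangle^{-1/4}\gtrsim_R 1$ and the exponent $\pm\frac23 Re(e^{i\pi/4}z^{3/2})$ stays bounded on the set. Hence every stated inequality holds there with an $R$-dependent, i.e.\ $K$-dependent, constant. (At $z_r=0$ both the ``$z_r\ge0$'' and ``$z_r\le0$'' bounds are asserted; both are covered here.)

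The content is the regime $\abs z\ge R$. Put $\zeta_-:=e^{i\pi/6}z$ and $\zeta_+:=e^{5i\pi/6}z$, so $H_\pm(z)=Ai(\zeta_\pm)$ and $\abs{\zeta_\pm}=\abs z$. In the setting of \eqref{estimate_of_H_pm}, the hypothesis $0<z_i<K+\tfrac1{100}\abs{z_r}$ combined with $\abs z\ge R\gg K$ forces $\abs{z_r}\gtrsim R$ and hence $\ph z$ within $\arctan\tfrac1{50}$ of $0$ (if $z_r>0$) or of $\pi$ (if $z_r<0$); in the setting of \eqref{estimate_of_H_pm_high_mode} one simply has $\ph z\in(-\pi,0)$. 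In each case one checks that $\abs{\ph\zeta_\pm}\le\pi-\tfrac{\pi}{12}$ (taking $R$ large in the setting of \eqref{estimate_of_H_pm}), so the asymptotic above applies to $Ai(\zeta_\pm)$. Writing $\ph\zeta_-=\ph z+\tfrac\pi6+2\pi k_-$ and $\ph\zeta_+=\ph z+\tfrac{5\pi}6+2\pi k_+$ with $k_\pm\in\{-1,0\}$ chosen so that the left sides lie in $(-\pi,\pi)$, and using $\zeta_\pm^{3/2}=\abs z^{3/2}e^{\frac32 i\,\ph\zeta_\pm}$ against $z^{3/2}=\abs z^{3/2}e^{\frac32 i\,\ph z}$, a one-line computation gives $\zeta_-^{3/2}=(-1)^{k_-}e^{i\pi/4}z^{3/2}$ and $\zeta_+^{3/2}=(-1)^{k_++1}e^{i\pi/4}z^{3/2}$. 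Here $k_-=0$ precisely when $\ph z\le\tfrac{5\pi}6$ — which on $\abs z\ge R$ means $z_r\ge0$, and which is automatic on $\{z_i<0\}$ — and $k_-=-1$ otherwise; likewise $k_+=0$ precisely when $\ph z\le\tfrac\pi6$ (again $z_r\ge0$, or automatic when $z_i<0$) and $k_+=-1$ otherwise. Feeding these into $\abs{Ai(\zeta_\pm)}\lesssim\abs z^{-1/4}e^{-\frac23 Re(\zeta_\pm^{3/2})}$ and using $\abs z^{-1/4}\approx\langle z\rangle^{-1/4}$ reproduces exactly the four inequalities of \eqref{estimate_of_H_pm} and, via $k_-=k_+=0$ throughout $\{z_i<0\}$, the two of \eqref{estimate_of_H_pm_high_mode}.

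The only genuinely delicate point is the claim that $\ph\zeta_\pm$ stays a fixed distance from $\pm\pi$ once $\abs z\ge R$: this is precisely what the slope restriction $z_i<K+\tfrac1{100}\abs{z_r}$ buys. Without it, $\zeta_-$ (or $\zeta_+$) could approach the negative real axis, where $Ai$ is oscillatory with comparable $e^{\pm\frac23\zeta^{3/2}}$ contributions and the one-term bound fails. Everything else — the branch bookkeeping of $\zeta_\pm^{3/2}$ and the case split according to the sign of $z_r$ — is routine once that uniformity is secured.
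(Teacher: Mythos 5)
Your proposal is correct and is essentially the paper's own argument: bound $H_\pm$ trivially on a compact set, then apply the one-term Airy asymptotic $Ai(\zeta)\sim\frac{1}{2\sqrt{\pi}}\zeta^{-1/4}e^{-\frac{2}{3}\zeta^{3/2}}$ valid uniformly in sectors $\abs{\ph\zeta}\le\pi-c_0$, with the case split on the sign of $z_r$ coming from tracking the branch cut \eqref{eq:Branch} under the rotations $e^{i\pi/6}$, $e^{5i\pi/6}$. Your branch bookkeeping ($\zeta_-^{3/2}=(-1)^{k_-}e^{i\pi/4}z^{3/2}$, $\zeta_+^{3/2}=(-1)^{k_++1}e^{i\pi/4}z^{3/2}$, and $k_\pm=0$ throughout $\{z_i<0\}$) checks out and just makes explicit the step the paper only flags as "be careful with the branch cut."
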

\begin{proof}
Recall the definition of the $H_\pm$ in \eqref{def:HYetc}. 
For $\abs{z} \lesssim 1$, \eqref{estimate_of_H_pm} follows since $\abs{H_{\pm}} \lesssim 1$ on bounded sets.
For $\abs{z}$ larger, we apply the asymptotic expansion for $|z|\rightarrow \infty$, $|\mathrm{ph} z|<\pi-\zeta$, for any $\zeta>0$ (see e.g. \cite{Miller06}):
\begin{align*}
Ai(z)=\frac{1}{2z^{1/4}\sqrt{\pi}}e^{-2z^{3/2}/3}(1+O(|z|^{-3/2})). 
\end{align*}
Note that for the second and fourth inequalities, one must be careful to treat the branch cut \eqref{eq:Branch} correctly. For the estimates \eqref{estimate_of_H_pm_high_mode}, the proof is the same. One only needs to be careful with the fact that now the phase is  $\mathrm{ph}(z)\in(-\pi,0)$ since $z_r<0$.  
\end{proof}

In addition to the standard Airy functions, we also require the integrated version used by Romanov~\cite{Romanov73}, 
\begin{equation}\label{A_0}
A_0(z)=\int_{ze^{i \pi/6}}^{i0+\infty} Ai(t)dt, 
\end{equation}
where for definiteness we take the contour to be the straight line connecting $ze^{i\pi/6}$ to $0$ and then the ray connecting $0$ to $i0 + \infty$. 
The following property of $A_0$ proved by Romanov~\cite{Romanov73} is crucial. 
\begin{lemma}\label{lem_0}[Lemma 2 and following remark, \cite{Romanov73}]
  The function $A_0(z)$ has no zeros in the sector $-\pi\leq \mathrm{ph} (z)\leq 2\pi/3$ and in the half-plane $\{z| z_i \leq \delta_0\}$ for some universal constant $\delta_0>0$. Moreover, the following quantity $a$ is strictly positive
\begin{equation}\label{defn_of_a}
a=a(\delta)=-\max_{ z_i\leq \delta}Re \frac{A_0'(z)}{A_0(z)}>0, \quad\delta\in [0,\delta_0).
\end{equation}
\end{lemma}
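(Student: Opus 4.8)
The plan is to follow Romanov's original argument \cite{Romanov73}, which rests entirely on precise asymptotics of the Airy function and of its antiderivative $A_0$. The structural observation to start from is that $A_0'(z)=-e^{i\pi/6}Ai(ze^{i\pi/6})$, so $Ai(ze^{i\pi/6})$ solves the rotated Airy equation $w''=izw$ and $A_0$ itself satisfies $A_0'''=izA_0'$; thus $A_0'$ is the branch of the solution that is recessive in the sector around the positive real axis, and $A_0$ is the antiderivative normalised by $A_0(z)\to 0$ as $z\to+\infty$.

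\textbf{Step 1 (zero-freeness).} First I would establish, uniformly on the closed sector $-\pi\le\ph z\le 2\pi/3$ (and slightly beyond it), the expansion
\begin{align*}
A_0(z)=\frac{1}{2\sqrt\pi}\,(ze^{i\pi/6})^{-3/4}\,e^{-\frac23(ze^{i\pi/6})^{3/2}}\big(1+O(|z|^{-3/2})\big),\qquad |z|\to\infty,
\end{align*}
obtained from the classical asymptotics of $Ai,Ai'$ together with the integration by parts based on $Ai''(t)=tAi(t)$, after deforming the contour in the definition of $A_0$ (which nominally runs to $i\infty$, in the oscillatory direction) onto a steepest-descent ray $\ph t\in(-\pi/3,\pi/3)$; the deformation is justified by Cauchy's theorem and the conditional convergence of $\int^{i\infty}Ai$. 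This gives $A_0(z)\ne 0$ for $|z|\ge R_0$ in the sector, and for $|z|\le R_0$ zeros are excluded either by the known location of the zeros of $\int Ai$ (all in $2\pi/3<|\ph z|<\pi$) or by tracking $\arg A_0$ around the boundary of the compact region using the ODE and the expansion. Because the zero set is closed and $A_0$ is nonvanishing on the rays $\ph z=-\pi$ and $\ph z=2\pi/3$ of the closed sector, continuity together with the large-$|z|$ information upgrades this to: $A_0$ has no zeros in the half-plane $\{z_i\le\delta_0\}$ for some universal $\delta_0>0$.

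\textbf{Step 2 (strict negativity of $a(\delta)$).} On $\{z_i\le\delta_0\}$ the function $G:=A_0'/A_0$ is holomorphic, so $\mathrm{Re}\,G$ is harmonic. Differentiating the Step~1 expansion (or using $A_0\sim (ze^{i\pi/6})^{-1/2}Ai(ze^{i\pi/6})$) gives $G(z)=-e^{i\pi/4}\sqrt z\,(1+o(1))$ in the directions where $A_0$ is recessive, whence $\mathrm{Re}\,G(z)=-|z|^{1/2}\cos(\tfrac\pi4+\tfrac12\ph z)(1+o(1))$; on $\{z_i\le\delta\}$ with $\delta$ small the angle $\tfrac\pi4+\tfrac12\ph z$ stays in a fixed compact subinterval of $(-\tfrac\pi2,\tfrac\pi2)$, so $\mathrm{Re}\,G\to-\infty$ as $|z|\to\infty$ in that part of the region, and the Stokes-corrected asymptotics give the same conclusion in the remaining directions near $\ph z=\pm\pi$. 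Hence $\mathrm{Re}\,G$ attains its supremum over $\{z_i\le\delta\}$ at a finite point, which by the maximum principle lies on the line $\{z_i=\delta\}$, and since $\mathrm{Re}\,G\to-\infty$ along that line the supremum is a genuine maximum at some $z_\star$. Finally I would check $\mathrm{Re}\,G(z_\star)<0$ via the identity $\mathrm{Re}\,G(x+iy)=\partial_x\log|A_0(x+iy)|$: it suffices that $x\mapsto|A_0(x+iy)|$ be strictly decreasing for $y\le\delta_0$, which for large $|x|$ is immediate from the exponential factor in Step~1 and for bounded $x$ follows from the same argument-tracking/ODE estimates used there. This yields $a(\delta)=-\max_{z_i\le\delta}\mathrm{Re}\,G(z)\ge-\mathrm{Re}\,G(z_\star)>0$ for every $\delta\in[0,\delta_0)$.

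The hard part is the behaviour near the boundary rays $\ph z=-\pi$ and $\ph z=2\pi/3$: these are precisely the directions in which the contour defining $A_0$ meets the turning/oscillatory regime of $Ai$ and in which the recessive and dominant solutions of $w''=izw$ exchange dominance (Stokes phenomenon). Making the Step~1 expansion uniform up to and slightly past those rays, and correspondingly pinning down the sign of $\mathrm{Re}\,G$ on the entire line $z_i=\delta$ and not merely near infinity, is the crux --- this is exactly the content of Lemma~2 and the subsequent remark of \cite{Romanov73}, which we invoke.
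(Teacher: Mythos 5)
The paper does not actually prove this lemma — it is quoted directly from Romanov (Lemma 2 and the following remark of \cite{Romanov73}) — so there is no internal argument to compare against. Your sketch follows the same route as Romanov's original proof (large-$|z|$ asymptotics of $A_0$ and of $\mathrm{Re}(A_0'/A_0)$, zero-freeness, and a maximum-principle argument), and, like the paper, you ultimately defer the delicate uniform asymptotics near the rays $\ph z=-\pi$, $\ph z=2\pi/3$ to \cite{Romanov73}; this is consistent with the paper's treatment.
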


We also have the following lower bound of $A_0$, which also follows from [(3.4) \cite{Romanov73}]; see also \cite{Wasow53}. 
\begin{lemma}[Integrated Airy function asymptotics] \label{lem:IntAiry} 
The integrated Airy function \eqref{A_0} has the following lower bounds for
$z=z_r+i z_i  \subset \set{z \in \Complex : 0 < z_i  < K + \frac{1}{100}\abs{z_r}} \cup \set{z \in \Complex: z_i < 0}$,
\begin{subequations}\label{A_0_lower_bound}
\begin{align}
|A_0(z)|\gtrsim_K& \frac{1}{\lan z\ran^{3/4}}e^{\frac{2}{3}Re(z^{3/2}e^{i\pi/4})},\quad z_r\leq 0;\\ 
|A_0(z)|\gtrsim_K &\frac{1}{\lan z\ran^{3/4}}e^{-\frac{2}{3}Re(z^{3/2}e^{i\pi/4})},\quad z_r\geq0.
\end{align}
\end{subequations}
\end{lemma}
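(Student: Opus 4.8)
The plan is to prove Lemma~\ref{lem:IntAiry} by combining the definition \eqref{A_0} of $A_0$ with the standard asymptotic expansion of $Ai$ used already in the proof of Lemma~\ref{lem:estimate_of_Airy}, being very careful about the branch cut convention \eqref{eq:Branch}. First I would dispose of the region $\abs{z} \lesssim 1$: here $A_0(z)$ is continuous and, by Lemma~\ref{lem_0}, nonvanishing on the closed region under consideration (which lies in the sector $-\pi \le \ph z \le 2\pi/3$ together with the lower half-plane), so $\abs{A_0(z)} \gtrsim 1 \approx \brak{z}^{-3/4} e^{\pm\frac23 Re(z^{3/2}e^{i\pi/4})}$ since the exponential factor is also $\approx 1$ on bounded sets; this handles small $\abs z$ for both inequalities at once.

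For $\abs{z}$ large, the key point is to integrate the Airy asymptotics. Writing $\zeta = z e^{i\pi/6}$, so that $\zeta^{3/2} = z^{3/2} e^{i\pi/4}$ under our branch cut, the substitution $t \mapsto s = t e^{i\pi/6}$ in \eqref{A_0} turns $A_0(z)$ into an integral of $Ai(s e^{-i\pi/6})$, but it is cleaner to keep the contour in \eqref{A_0} and insert $Ai(t) = \frac{1}{2\sqrt\pi} t^{-1/4} e^{-\frac23 t^{3/2}}(1 + O(\abs{t}^{-3/2}))$ directly. On the radial part of the contour from $\zeta = z e^{i\pi/6}$ outward, the phase $Re(t^{3/2})$ is monotone (this is exactly where the sign condition $z_r \le 0$ versus $z_r \ge 0$, i.e. $\ph z$ on either side of $\pi/2$, enters — it controls whether moving away from $\zeta$ increases or decreases $Re(t^{3/2})$), so the integral is dominated by its endpoint contribution at $t = \zeta$, giving $\abs{A_0(z)} \approx \abs{\zeta}^{-1/4} \cdot \abs{\zeta}^{-1/2} e^{-\frac23 Re(\zeta^{3/2})} = \abs{z}^{-3/4} e^{-\frac23 Re(z^{3/2}e^{i\pi/4})}$; the extra $\abs{\zeta}^{-1/2} \approx \abs{\zeta^{3/2}}^{-1/3}$ is the Laplace/stationary-endpoint gain from integrating $e^{-\frac23 t^{3/2}}$ near $t = \zeta$. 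When $z_r \le 0$ one instead routes the contour so that the endpoint at $0$–to–$i\infty$ piece is negligible and the dominant term has the opposite sign in the exponent, which is precisely the dichotomy recorded in \eqref{A_0_lower_bound}. I would cite \cite{Romanov73}, equation (3.4), and \cite{Wasow53} for the precise version of this computation rather than reproduce every estimate.

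The main obstacle is bookkeeping with the branch cut \eqref{eq:Branch} and with the geometry of the contour: one must verify that the chosen straight-line-plus-ray contour can be deformed (without crossing zeros of $A_0$, which is legitimate by Lemma~\ref{lem_0}) into a steepest-descent-type path along which $Re(t^{3/2})$ is monotone, and that on the region $\set{0 < z_i < K + \frac{1}{100}\abs{z_r}} \cup \set{z_i < 0}$ the argument $\ph(z e^{i\pi/6})$ stays inside the sector where the asymptotic expansion of $Ai$ is uniformly valid (i.e. bounded away from $\ph = \pi$). The uniformity constant is what produces the $K$-dependence $\gtrsim_K$ in the statement. Once the contour and phase monotonicity are pinned down, the lower bound follows by the triangle inequality: the leading term has the claimed size and the $O(\abs{t}^{-3/2})$ correction, once integrated, is lower order and can be absorbed for $\abs z$ large enough depending on $K$; the finitely many remaining $\abs z$ are covered by the compactness argument of the first step.
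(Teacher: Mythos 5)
The paper does not actually prove Lemma \ref{lem:IntAiry}: it is quoted directly from Romanov [(3.4), \cite{Romanov73}] (see also \cite{Wasow53}), so your plan of sketching the Laplace/endpoint-dominance mechanism and then deferring the precise computation to those same references is essentially the paper's own route, and your sketch does capture the correct mechanism (the $\brak{z}^{-3/4}$ is the $|\zeta|^{-1/4}$ from the Airy asymptotics times the $|\zeta|^{-1/2}$ endpoint gain, and $(ze^{i\pi/6})^{3/2}=z^{3/2}e^{i\pi/4}$ under the branch convention \eqref{eq:Branch} explains the exponent). Two details in your outline are, however, not right as stated. First, in the case $z_r\le 0$ with $0<z_i<K+\frac{1}{100}\abs{z_r}$ --- the case the paper leans on most --- the rotated variable $\zeta=ze^{i\pi/6}$ has $\ph \zeta$ ranging up to nearly $7\pi/6$, so it is \emph{not} bounded away from $\ph=\pi$; there the naive expansion of $Ai$ must be supplemented by the connection formula/Stokes analysis (and the contour starts near the zeros of $Ai$ on the negative real axis), which is precisely the content of Romanov's estimate rather than something one can simply ``verify''. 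Second, your compactness step invokes Lemma \ref{lem_0}, but the zero-free set stated there (the sector $-\pi\le \ph z\le 2\pi/3$ together with $\set{z_i\le \delta_0}$) does not contain all bounded points of the region in the statement (for instance $z_r\le 0$ with $\delta_0<z_i<K$), so nonvanishing there does not follow verbatim; one either needs Romanov's finer information on the zeros of $A_0$ or to note that the arguments actually arising in the paper have imaginary part either of size $\delta$ or negative, or else large $\abs{z_r}$. Neither point is fatal --- they are exactly the delicate spots for which the paper cites [(3.4), \cite{Romanov73}] --- but as written your sketch asserts these steps rather than proving them.
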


\subsection{Power-gain lemmas}
The following lemma is used often in our paper.
Similar estimates are used in e.g. \cite{GN17} and the references therein, however, here we use a different range of complex parameters, so we include a sketch for the sake of completeness. 
\begin{lemma}\label{Gain_in_poly_power}
\textbf{Case 1:}
Consider the spectral parameter $c$ lying on the vertical line $\Gamma_\al:=\{c=c_r-(\al \nu+\delta \ep)i, c_r\in \rr\}$. From \eqref{Langer_variable}, the Langer variables $R,Z$ have the explicit form 
\[
R=\frac{r-c_r}{\ep}+\delta i=:R_r+\delta i,\quad Z=\frac{z-c_r}{\ep}+\delta i=:Z_r+\delta i.
\]
For $\delta$ small enough, the following estimates hold (recall also \eqref{eq:Branch})
\begin{subequations} \label{ineq:powergain}
\begin{align}
\int_0^{Z_r}&\frac{1}{\lan R\ran^{1/4}}e^{\frac{2}{3}Re((R_r+\delta i)^{3/2}e^{i\pi/4})}dR_r\lesssim \frac{1}{\lan Z\ran^{3/4}}e^{\frac{2}{3}Re ((Z_r+\delta i)^{3/2}e^{\pi i/4})},\quad Z_r\geq 0.\label{Gain_in_poly_power_1}\\
\int_{Z_r}^0&\frac{1}{\lan R\ran^{1/4}}e^{\frac{2}{3}Re((R_r+\delta i)^{3/2}e^{i\pi/4})}dR_r\lesssim \frac{1}{\lan Z\ran^{3/4}}e^{\frac{2}{3}Re ((Z_r+\delta i)^{3/2}e^{\pi i/4})},\quad Z_r\leq 0,\label{Gain_in_poly_power_2}\\
\int_{Z_r}^\infty&\frac{1}{\lan R\ran^{1/4}}e^{-\frac{2}{3}Re((R_r+\delta i)^{3/2}e^{i\pi/4})}dR_r\lesssim \frac{1}{\lan Z\ran^{3/4}}e^{-\frac{2}{3}Re ((Z_r+\delta i)^{3/2}e^{\pi i/4})},\quad Z_r\geq 0,\label{Gain_in_poly_power_3}\\
\int_{-\infty}^{Z_r}&\frac{1}{\lan R\ran^{1/4}}e^{-\frac{2}{3}Re((R_r+\delta i)^{3/2}e^{i\pi/4})}dR_r\lesssim \frac{1}{\lan Z\ran^{3/4}}e^{-\frac{2}{3}Re ((Z_r+\delta i)^{3/2}e^{\pi i/4})},\quad Z_r\leq 0.\label{Gain_in_poly_power_4}
\end{align}
\end{subequations}
\textbf{Case 2:} In the high mode case, i.e., $\al^2\nu\geq C_0$, we consider the spectral parameter $c$ lying on the vertical line $\Gamma_\al:=\{c=c_r-(1-\kappa)\al \nu i, c_r\in \rr\}$. From \eqref{Langer_variable}, the Langer variables $R,Z$ have the explicit form 
\[
R=\frac{r-c_r}{\ep}-\kappa\al^{4/3}\nu^{2/3} i,\quad Z=\frac{z-c_r}{\ep}-\kappa\al^{4/3}\nu^{2/3} i.
\]
The following estimates hold (recall also \eqref{eq:Branch})
\begin{subequations} \label{ineq:powergain_2}
\begin{align}
\int_0^{Z_r}&\frac{1}{\lan R\ran^{1/4}}e^{\frac{2}{3}Re((R_r-\kappa\al^{4/3}\nu^{2/3} i)^{3/2}e^{i\pi/4})}dR_r\lesssim \frac{1}{\lan Z\ran^{3/4}}e^{\frac{2}{3}Re ((Z_r-\kappa\al^{4/3}\nu^{2/3} i)^{3/2}e^{\pi i/4})},\quad Z_r\geq 0.\label{Gain_in_poly_power_5}\\
\int_{Z_r}^0&\frac{1}{\lan R\ran^{1/4}}e^{-\frac{2}{3}Re((R_r-\kappa\al^{4/3}\nu^{2/3} i)^{3/2}e^{i\pi/4})}dR_r\lesssim \frac{1}{\lan Z\ran^{3/4}}e^{-\frac{2}{3}Re ((Z_r-\kappa\al^{4/3}\nu^{2/3} i)^{3/2}e^{\pi i/4})},\quad Z_r\leq 0,\label{Gain_in_poly_power_6}\\
\int_{Z_r}^\infty&\frac{1}{\lan R\ran^{1/4}}e^{-\frac{2}{3}Re((R_r-\kappa\al^{4/3}\nu^{2/3} i)^{3/2}e^{i\pi/4})}dR_r\lesssim \frac{1}{\lan Z\ran^{3/4}}e^{-\frac{2}{3}Re ((Z_r-\kappa\al^{4/3}\nu^{2/3} i)^{3/2}e^{\pi i/4})},\quad Z_r\geq 0,\label{Gain_in_poly_power_7}\\
\int_{-\infty}^{Z_r}&\frac{1}{\lan R\ran^{1/4}}e^{\frac{2}{3}Re((R_r-\kappa\al^{4/3}\nu^{2/3} i)^{3/2}e^{i\pi/4})}dR_r\lesssim \frac{1}{\lan Z\ran^{3/4}}e^{\frac{2}{3}Re ((Z_r-\kappa\al^{4/3}\nu^{2/3} i)^{3/2}e^{\pi i/4})},\quad Z_r\leq 0.\label{Gain_in_poly_power_8}
\end{align}
\end{subequations}
\end{lemma}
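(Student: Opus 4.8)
The plan is to prove all of \eqref{ineq:powergain} and \eqref{ineq:powergain_2} by one Laplace-type (``Watson's lemma at an endpoint'') argument: on each interval of integration the exponent is monotone, so the integral is governed by its value at the endpoint where the exponent is largest, and the reciprocal of the derivative of the exponent there supplies the extra factor $\brak{Z}^{-1/2}$ beyond the polynomial weight $\brak{R}^{-1/4}$.

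First I would record the needed facts about the phase. Writing $\Psi(s)=\mathrm{Re}\big((s+\delta i)^{3/2}e^{i\pi/4}\big)$ (and $\Psi_2$ for the analogue with $+\delta i$ replaced by $-\kappa\al^{4/3}\nu^{2/3}i$), the branch-cut convention \eqref{eq:Branch} gives, for $\delta$ small, that $(s+\delta i)^{1/2}e^{i\pi/4}$ has argument in $(\pi/4,\pi/2)$ when $s>0$ and in $(\pi/2,3\pi/4)$ when $s<0$. Hence $\Psi'(s)=\tfrac{3}{2}\,\mathrm{Re}\big((s+\delta i)^{1/2}e^{i\pi/4}\big)$ is positive on $(0,\infty)$ and negative on $(-\infty,0)$, so $\Psi$ is monotone on each half-line with $\Psi(s)\to+\infty$ like $|s|^{3/2}$ as $|s|\to\infty$; moreover $|\Psi'(s)|\gtrsim\brak{s}^{1/2}$ once $|s|\ge c_0$ for a suitable universal $c_0$, while $\brak{R}\approx\brak{R_r}$ since $\delta$ is a fixed small constant. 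In the high-mode case of \eqref{ineq:powergain_2} the imaginary part $-\kappa\al^{4/3}\nu^{2/3}$ is negative and, as $\al^2\nu\ge C_0$, its modulus $\kappa(\al^2\nu)^{2/3}$ is bounded below by a fixed constant; thus the argument of $(s-\kappa\al^{4/3}\nu^{2/3}i)^{1/2}e^{i\pi/4}$ stays in $(-\pi/4,\pi/4)$ for \emph{every} $s\in\rr$, so $\Psi_2$ is monotone increasing on all of $\rr$ and $|\Psi_2'(s)|\gtrsim\brak{s}^{1/2}$ with no degeneracy near the origin. This change in the direction of monotonicity on the negative axis is precisely what dictates the flipped signs of the exponents in \eqref{Gain_in_poly_power_6} and \eqref{Gain_in_poly_power_8} versus \eqref{Gain_in_poly_power_2} and \eqref{Gain_in_poly_power_4}.

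The core of the argument is the elementary differential inequality, valid for $|s|\ge c_0$,
\[
\frac{d}{ds}\Big(\brak{s}^{-3/4}e^{\pm\frac{2}{3}\Psi(s)}\Big)=\Big(\pm\tfrac{2}{3}\Psi'(s)-\tfrac{3}{4}\,s\brak{s}^{-2}\Big)\brak{s}^{-3/4}e^{\pm\frac{2}{3}\Psi(s)},
\]
where the prefactor has the sign of $\pm\Psi'(s)$ and modulus $\gtrsim\brak{s}^{1/2}$, because $|\Psi'(s)|\gtrsim\brak{s}^{1/2}$ swamps the bounded term $|s|\brak{s}^{-2}\le1$. For each of the eight integrals I would split at $|R_r|=c_0$: on $\{|R_r|\le c_0\}$ the integrand is $\lesssim1$ (there $\brak{R}\approx1$ and $\Psi$ is bounded), and this is dominated by the asserted right-hand side because $\brak{Z}^{-3/4}e^{\pm\frac{2}{3}\Psi(Z_r)}\gtrsim1$ when $|Z_r|\lesssim1$ and is exponentially large otherwise; on $\{|R_r|\ge c_0\}$ one simply integrates the differential inequality over the relevant interval. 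In each of \eqref{Gain_in_poly_power_1}--\eqref{Gain_in_poly_power_4} (and likewise in \eqref{ineq:powergain_2}) the endpoint at which the exponent is maximal is $R_r=Z_r$, producing exactly the term $\brak{Z}^{-3/4}e^{\pm\frac{2}{3}\Psi(Z_r)}$; the other endpoint is either a finite point (contributing $O(1)$) or $\pm\infty$ (contributing $0$ via $\brak{s}^{-3/4}e^{-\frac{2}{3}\Psi(s)}\to0$, which also provides the convergence of the improper integrals \eqref{Gain_in_poly_power_3}, \eqref{Gain_in_poly_power_4}). Case~2 is handled identically and is in fact simpler, since $\Psi_2$ is globally monotone and $|\Psi_2'|$ never degenerates.

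The one genuinely delicate point is the branch-cut bookkeeping on the negative real axis, where $\mathrm{ph}(s+\delta i)$ is close to $\pi$ and hence $(s+\delta i)^{3/2}$ carries a phase near $\tfrac{3\pi}{2}$; there one must verify that $\mathrm{Re}\big((s+\delta i)^{3/2}e^{i\pi/4}\big)$ and $\mathrm{Re}\big((s+\delta i)^{1/2}e^{i\pi/4}\big)$ have the signs asserted above. The other subtlety, the degeneracy $\Psi'(0)=0$ in Case~1 (so that $\brak{R}^{-1/4}$ is not slowly varying relative to $e^{\pm\frac{2}{3}\Psi}$ near the origin and a textbook Watson's lemma does not apply directly), is precisely what the split at $|R_r|=c_0$ together with the monotone-integration estimate is designed to absorb.
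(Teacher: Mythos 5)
Your proposal is correct, and it takes a genuinely different route from the paper's proof. The paper proves \eqref{ineq:powergain} by first disposing of $\abs{Z_r}\leq 4$, then using the pointwise comparison \eqref{proof_of_Gain_in_poly_power_1} (freezing the square-root factor at the endpoint, i.e.\ replacing $\mathrm{Re}((R_r+\delta i)^{3/2}e^{i\pi/4})$ by $\mathrm{Re}((Z_r+\delta i)^{1/2}(R_r+\delta i)e^{i\pi/4})$), splitting the interval at $Z_r-\sqrt{Z_r}$, and integrating the resulting linear-in-$R_r$ exponential explicitly, the extra factor $\brak{Z}^{-1/2}$ coming from the reciprocal of $\mathrm{Re}((Z_r+\delta i)^{1/2}e^{i\pi/4})$; Case 2 is reduced to the same scheme after proving the analogous monotonicity inequalities for phases in the lower half-plane. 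You instead integrate a differential inequality for the candidate antiderivative $\brak{\cdot}^{-3/4}e^{\pm\frac{2}{3}\Psi}$, with the gain supplied by $\abs{\Psi'}\gtrsim\brak{\cdot}^{1/2}$ away from the origin; this is a single unified computation that covers all eight bounds without the interval splitting at $Z_r - \sqrt{Z_r}$, and it makes the sign flips in \eqref{Gain_in_poly_power_6}, \eqref{Gain_in_poly_power_8} transparent via the global monotonicity of $\Psi_2$. Both arguments rest on the same phase bookkeeping (the sign and size of $\mathrm{Re}((s+\delta i)^{1/2}e^{i\pi/4})$ on each half-line, and its analogue with imaginary part $-\kappa\al^{4/3}\nu^{2/3}$), which you verify correctly. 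Two small points to make explicit when writing up Case 2: (a) the weights in \eqref{ineq:powergain_2} involve the full moduli $\brak{R},\brak{Z}$, including the imaginary shift $\kappa\al^{4/3}\nu^{2/3}$ which may be large, so the comparison function must be $\brak{R}^{-3/4}e^{\pm\frac{2}{3}\Psi_2(R_r)}$ rather than $\brak{R_r}^{-3/4}e^{\pm\frac{2}{3}\Psi_2(R_r)}$ (using $\brak{R_r}$ only yields the weaker bound with $\brak{Z_r}^{-3/4}\geq\brak{Z}^{-3/4}$); the correction term is then $\tfrac{3}{4}\abs{R_r}\brak{R}^{-2}\leq\tfrac{3}{4}\brak{R}^{-1}$, still dominated by $\abs{\Psi_2'}\gtrsim\brak{R}^{1/2}$ since $\abs{R}\geq\kappa(\al^2\nu)^{2/3}\gtrsim_\kappa 1$; and (b) the crude $O(1)$ bound on $\{\abs{R_r}\leq c_0\}$ is a Case-1 device only, since in Case 2 the right-hand side with the minus sign can be exponentially small in $\kappa\al^{4/3}\nu^{2/3}$ when $Z_r$ is near $0$; but, as you indicate, no splitting is needed there because $\abs{\Psi_2'}$ never degenerates, so one integrates the differential inequality over the whole interval.
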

\begin{proof}
\textbf{Proof of \eqref{ineq:powergain}:} \\ 
The inequality is immediate for $\abs{Z_r} \leq 4$, hence, consider next the case $\abs{Z_r} \geq 4$. 
In this case, the following holds on the integration interval: 
\begin{align}\label{proof_of_Gain_in_poly_power_1}
e^{\frac{2}{3}Re((R_r+\delta i)^{3/2}e^{\pi i/4})}\leq e^{\frac{2}{3}Re((Z_r+\delta i)^{1/2}(R_r+\delta i)e^{i\pi/4})},\quad Z_r\geq R_r\geq 2;
 \end{align}
this follows by e.g. differentiation. 
Next, we decompose the integral in \eqref{Gain_in_poly_power_1} into two parts:
\begin{align}
\int_0^{Z_r}\frac{1}{\lan R\ran^{1/4}}e^{\frac{2}{3}Re((R_r+\delta i)^{3/2}e^{i\pi/4})}dR_r&=\left(\int_0^{Z_r-\sqrt{Z_r}}+\int_{Z_r-\sqrt{Z_r}}^{Z_r}\right)\frac{1}{\lan R\ran^{1/4}}e^{\frac{2}{3}Re((R_r+\delta i)^{3/2}e^{i\pi/4})}dR_r \nonumber \\ & =:T_1+T_2.\label{proof_of_gain_in_poly_power_T_1_T_2}
\end{align}
First consider $T_2$. Combining \eqref{proof_of_Gain_in_poly_power_1} and the following relation on the integration interval, 
\begin{equation*}
\lan R\ran
\geq (1+|Z_r-\sqrt{Z_r}|^2+\delta^2)^{1/2}\geq (1+\frac{1}{4}|Z|^2)^{1/2}\geq \frac{1}{2}\lan Z\ran, 
\end{equation*}
together with $\delta\leq \frac{1}{100}\leq \frac{1}{400}Z_r$, we have 
\begin{align}
T_2\lesssim& \int_{Z_r-\sqrt{Z_r}}^{Z_r}\frac{1}{\lan Z\ran^{1/4}}e^{\frac{2}{3}Re((Z_r+\delta i)^{1/2}(R_r+\delta i)e^{i\pi/4})}dR_r
\lesssim\frac{1}{\lan Z\ran^{1/4}}\frac{1}{Re((Z_r+\delta i)^{1/2}e^{i\pi/4})}e^{\frac{2}{3}Re((Z_r+\delta i)^{3/2}e^{i\pi/4})}\nonumber\\
\lesssim& \frac{1}{\lan Z\ran^{3/4}}e^{\frac{2}{3}Re((Z_r+\delta i)^{3/2}e^{i\pi/4})}.\label{proof_of_gain_in_poly_power_T_2}
\end{align}
This completes the treatment of the $T_2$ term.
The $T_1$ term in \eqref{proof_of_gain_in_poly_power_T_1_T_2}, using \eqref{proof_of_Gain_in_poly_power_1} and the fact that $\delta\leq \frac{1}{100}\leq \frac{1}{400} \abs{Z_r} $ is
easier and is hence omitted for the sake of brevity. 
Hence \eqref{Gain_in_poly_power_1} follows. 

The remaining inequalities \eqref{Gain_in_poly_power_1}--\eqref{Gain_in_poly_power_4}  hold by analogous arguments once the suitable analogue of \eqref{proof_of_Gain_in_poly_power_1} is similarly deduced via differentiation.
We omit the details for the sake of brevity. 

\textbf{Proof of \eqref{ineq:powergain_2}:}\\
In this case, $\Gamma_\al:=\{c=c_r-(1-\kappa)\al\nu i,\quad c_r\in\rr\}$. 
We prove the following monotonicity inequalities adapted for use in high frequencies: 
\begin{align*}
e^{\frac{2}{3}Re((Y_r-\kappa\al^{4/3}\nu^{2/3}i)^{3/2}e^{i\pi/4})}\leq&
e^{\frac{2}{3}Re((W_r-\kappa\al^{4/3}\nu^{2/3}i)^{3/2}e^{i\pi/4})},\quad W_r\geq Y_r\geq 0;\\
e^{\frac{2}{3}Re((Y_r-\kappa\al^{4/3}\nu^{2/3}i)^{3/2}e^{i\pi/4})}\leq&
e^{\frac{2}{3}Re((W_r-\kappa\al^{4/3}\nu^{2/3}i)^{1/2}(Y_r-\kappa\al^{4/3}\nu^{2/3}i)e^{i\pi/4})},\quad W_r\geq Y_r\geq 0;\\
e^{-\frac{2}{3}Re((Y_r-\kappa\al^{4/3}\nu^{2/3}i)^{3/2}e^{i\pi/4})}\leq&
e^{-\frac{2}{3}Re((W_r-\kappa\al^{4/3}\nu^{2/3}i)^{3/2}e^{i\pi/4})},\quad W_r\leq Y_r\leq 0.  \\
e^{-\frac{2}{3}Re((Y_r-\kappa\al^{4/3}\nu^{2/3}i)^{3/2}e^{i\pi/4})}\leq&
e^{-\frac{2}{3}Re((W_r-\kappa\al^{4/3}\nu^{2/3}i)^{1/2}(Y_r-\kappa\al^{4/3}\nu^{2/3}i)e^{i\pi/4})},\quad W_r\leq Y_r\leq 0. 
\end{align*}
Consider the case $W_r\leq Y_r\leq 0$ i.e., the third and fourth inequalities. First, 
\begin{align*}
-\frac{d}{dw}Re((w-\kappa\al^{4/3}\nu^{2/3}i)^{1/2}(Y_r-\kappa\al^{4/3}\nu^{2/3}i)e^{i\pi/4})=-\frac{1}{2}Re((w-\kappa\al^{4/3}\nu^{2/3}i)^{-1/2}(Y_r-\kappa\al^{4/3}\nu^{2/3}i)e^{i\pi/4}).
\end{align*}
Since $W_r\leq Y_r$, $\mathrm{ph}(W_r-\kappa\al^{4/3}\nu^{2/3}i)\leq \mathrm{ph}(Y_r-\kappa\al^{4/3}\nu^{2/3}i)$, so that the derivative is negative and the fourth inequality follows.
The third inequality follows by a similar argument. 
For the $W_r\geq Y_r\geq 0$ case, we have
\begin{align*}
\frac{d}{dw}Re((w-\kappa\al^{4/3}\nu^{2/3}i)^{1/2}(Y_r-\kappa\al^{4/3}\nu^{2/3}i)e^{i\pi/4})=\frac{1}{2}Re((w-\kappa\al^{4/3}\nu^{2/3}i)^{-1/2}(Y_r-\kappa\al^{4/3}\nu^{2/3}i)e^{i\pi/4}).
\end{align*}
Since $W_r\geq Y_r\geq 0$, we have $0\leq-\frac{1}{2}\mathrm{ph}(W_r-\kappa\al^{4/3}\nu^{2/3}i)\leq -\frac{1}{2}\mathrm{ph}(Y_r-\kappa\al^{4/3}\nu^{2/3}i)$ and hence the derivative is positive. This completes the proof of the second inequality; the first inequality follows  similarly. 
Combining these inequalities with the argument used to prove \eqref{ineq:powergain} yields the inequalities \eqref{ineq:powergain_2}. 
\end{proof}
By a similar argument, one can prove the following lemma (used in Section \ref{sec:Gen2}). 

\begin{lemma}\label{lem:Connection_Zone_Gain_in_poly_power}
Consider the spectral parameter $c$ lying on the half-lines as in Section \ref{sec:Gen2},
\begin{align*}
\Gamma^+=&\{c| c_i+\al\nu+\delta\al^{-1/3}\nu^{1/3}=-\tan(\theta)(c_r-200)_+, \quad c_r\in [200,\infty)\};\\
\Gamma^-=&\{c|c_i+\al\nu+\delta\al^{-1/3}\nu^{1/3}=-\tan(\theta)(-c_r-200)_+, \quad c_r\in(-\infty,-200]\}.
\end{align*}
The Langer variables are 
\[R=\frac{r-c_r}{\ep}-\frac{c_i+\al \nu}{\ep}i =R_r+iR_i,\quad Z=\frac{z-c_r}{\ep}-\frac{c_i+\al \nu}{\ep}i=Z_r+iZ_i.\] 
Then the following estimates are satisfied (recall also \eqref{eq:Branch})
\begin{subequations}
\begin{align}
\int_{\frac{-1-c_r}{\ep}}^{Z_r}&\frac{1}{\lan R\ran^{1/4}}e^{\frac{2}{3}Re((R_r+ iR_i)^{3/2}e^{i\pi/4})}dR_r\lesssim \frac{1}{\lan Z\ran^{3/4}}e^{\frac{2}{3}Re ((Z_r+ iZ_i)^{3/2}e^{\pi i/4})},\quad Z_r\geq \max\left\{0,\frac{-1-c_r}{\ep}\right\};\label{Gain_in_poly_power_1_1}\\
\int_{Z_r}^{\frac{1-c_r}{\ep}}&\frac{1}{\lan R\ran^{1/4}}e^{\frac{2}{3}Re((R_r+iR_i )^{3/2}e^{i\pi/4})}dR_r\lesssim \frac{1}{\lan Z\ran^{3/4}}e^{\frac{2}{3}Re ((Z_r+ iZ_i)^{3/2}e^{\pi i/4})},\quad Z_r\leq \min\left\{0,\frac{1-c_r}{\ep}\right\};\label{Gain_in_poly_power_2_1}
\\
\int_{Z_r}^\infty&\frac{1}{\lan R\ran^{1/4}}e^{-\frac{2}{3}Re((R_r+iR_i)^{3/2}e^{i\pi/4})}dR_r\lesssim\frac{1}{\lan Z\ran^{3/4}}e^{-\frac{2}{3}
Re((Z_r+iZ_i)^{3/2}e^{i\pi/4})},\quad Z_r\geq 0;\\
\int_{-\infty}^{Z_r}&\frac{1}{\lan R\ran^{1/4}}e^{-\frac{2}{3}Re((R_r+iR_i)^{3/2}e^{i\pi/4})}dR_r\lesssim\frac{1}{\lan Z\ran^{3/4}}e^{-\frac{2}{3}Re((Z_r+iZ_i)^{3/2}e^{i\pi/4})},\quad Z_r\leq 0.
\end{align}
\end{subequations}
\end{lemma}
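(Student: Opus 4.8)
The plan is to follow the proof of Lemma~\ref{Gain_in_poly_power}, Case~1, essentially verbatim; the only new feature is that along the slanted half-lines $\Gamma^{\pm}$ of Section~\ref{sec:Gen2} the common imaginary part of the Langer variables, $R_i = Z_i = -(c_i+\al\nu)/\ep$, is no longer the fixed small constant $\delta$ but the $c_r$-dependent quantity $\delta + \tan(\theta)(\pm c_r-200)_+/\ep$. I would first record two structural facts. \emph{(a)} On $\Gamma^{\pm}$ one has $Z_i \ge \delta > 0$, so one never leaves the region where the asymptotics of Lemmas~\ref{lem:estimate_of_Airy} and \ref{lem:IntAiry} and the branch convention \eqref{eq:Branch} apply. \emph{(b)} Since $\abs{\tan\theta}<\tfrac1{100}$ and $r,z\in[-1,1]$, one has $\pm c_r-200 \le \pm c_r - z$ and hence $Z_i \le \delta + \tfrac1{100}\abs{Z_r}$, and likewise for $R_i$; in particular $\lan R\ran \approx \lan Z\ran$ whenever $R_r$ lies within distance $\sqrt{\lan Z\ran}$ of $Z_r$, and $\mathrm{ph}\big((Z_r+iZ_i)^{1/2}e^{i\pi/4}\big)$ stays uniformly inside $(0,\pi)$, bounded away from $\pm\pi/2$, so that $\big|Re\big((Z_r+iZ_i)^{1/2}e^{i\pi/4}\big)\big|\gtrsim\lan Z\ran^{1/2}$ in each of the two sign regimes that occur in the four claimed inequalities. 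These are exactly the hypotheses under which the earlier proof was run, with $\delta + \tfrac1{100}\abs{Z_r}$ now playing the role of the small constant $\delta$.

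With that in place, the core ingredient is the monotonicity inequality replacing \eqref{proof_of_Gain_in_poly_power_1}: for $R_r$ between the relevant endpoint $\tfrac{\pm1-c_r}{\ep}$ and $Z_r$, and with the sign matched to the case,
\begin{align*}
\pm Re\big((R_r+iZ_i)^{3/2}e^{i\pi/4}\big)\ \le\ \pm Re\big((Z_r+iZ_i)^{1/2}(R_r+iZ_i)e^{i\pi/4}\big).
\end{align*}
As in the proof of Lemma~\ref{Gain_in_poly_power}, this follows from an elementary one-variable calculus argument: differentiating the difference of the two sides in $R_r$ shows it is unimodal, it vanishes at $R_r=Z_r$, and it has the correct sign at the endpoint $R_r=\tfrac{\pm1-c_r}{\ep}$ because there the $Z_r^{1/2}$-type term dominates; hence the difference keeps its sign throughout. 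Granting this, one splits each integral exactly as in \eqref{proof_of_gain_in_poly_power_T_1_T_2} into the sub-interval within distance $\sqrt{\lan Z\ran}$ of $Z_r$ and its complement. On the near piece one uses \emph{(b)} to replace $\lan R\ran$ by $\lan Z\ran$ and the monotonicity inequality to linearize the exponent in $R_r$; integrating the exponential contributes the factor $1/Re\big((Z_r+iZ_i)^{1/2}e^{i\pi/4}\big)\approx\lan Z\ran^{-1/2}$, producing the claimed $\lan Z\ran^{-3/4}$, exactly as in \eqref{proof_of_gain_in_poly_power_T_2}. On the far piece the exponent is smaller by a fixed power of $\lan Z\ran$ and the bound is immediate, as is the case $\lan Z\ran\lesssim 1$ (which covers the corner $c_r=\pm200$, where everything is $O(1)$). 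The half-lines $\Gamma^{+}$ and $\Gamma^{-}$ are interchanged by $c_r\mapsto-c_r$, so it suffices to treat one of them.

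The only genuine point of care — what I would call the main obstacle, though it is a mild one — is checking that none of the smallness hypotheses used in Lemma~\ref{Gain_in_poly_power} (which were stated there as $\delta\le\tfrac1{400}\abs{Z_r}$) actually require $Z_i$ to be small: the differentiation argument for the monotonicity inequality needs only $Z_i>0$ together with the ordering of $\mathrm{ph}(R_r+iZ_i)$ and $\mathrm{ph}(Z_r+iZ_i)$, and the comparison $\lan R\ran\approx\lan Z\ran$ on the relevant subinterval needs only the slope bound $\abs{\tan\theta}<\tfrac1{100}$ from \emph{(b)}. Since $\theta$ is fixed and independent of $\al,\nu$, all the resulting estimates are uniform, no new phenomenon appears, and one may present the proof simply as a remark that points to the four cases of Lemma~\ref{Gain_in_poly_power}, spelling out only the modified monotonicity inequality and the contour facts \emph{(a)}--\emph{(b)} above while omitting the now-routine split-the-integral computation.
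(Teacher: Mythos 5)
Your proposal is correct and is essentially the paper's own argument: the paper proves this lemma only by remarking that it follows "by a similar argument" from Lemma~\ref{Gain_in_poly_power}, and your write-up supplies exactly the needed modifications — same imaginary part $R_i=Z_i$, the slope bound $\abs{\tan\theta}<\tfrac1{100}$ giving $Z_i\le\delta+\tfrac1{100}\abs{Z_r}$ (and likewise along the integration range), the correspondingly adjusted monotonicity inequality proved by elementary differentiation, and the same near/far splitting of the integral as in \eqref{proof_of_gain_in_poly_power_T_1_T_2}--\eqref{proof_of_gain_in_poly_power_T_2}. The only slip is the parenthetical claim that the corner $c_r=\pm200$ falls under the trivial case $\lan Z\ran\lesssim1$: there $\abs{Z_r}\approx200/\ep$ is huge (only $Z_i=\delta$ is $O(1)$), but this is harmless since that point is covered verbatim by the main argument, exactly as in Lemma~\ref{Gain_in_poly_power}.
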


\subsection{Detailed Green's function} \label{sec:Green}
The following denotes the full expression of the boundary resolvent $\mathcal{R}_b$ directly as an integral operator on $\widehat{\omega}_{in}$; see \ref{om_b}:
\begin{align}\omega_b=&-\int_{\Gamma_\al}\frac{ 2\epsilon^{-2}}{\pi D(\al,c)}\left(\aaa\right)\int_{-1}^1{\bigg[\int_{z}^1  e^{\al r}H_-(R)dr}\bigg] H_+(Z)H_-(Y) \widehat{\omega}_{in}(\alpha,z) dzdc\nonumber\\
&-\int_{\Gamma_\al}\frac{ 2\epsilon^{-2}}{\pi D(\al,c)}\left(\aaa\right)\int_{-1}^1 {\bigg[\int_{-1}^z e^{\al r} H_+(R)dr\bigg]} H_-(Z)H_-(Y) \widehat{\omega}_{in}(\alpha,z)dzdc\nonumber\\
&-\int_{\Gamma_\al}\frac{ 2\epsilon^{-2}}{\pi D(\al,c)}\left(\bb\right)\int_{-1}^1 {\bigg[\int_{z}^1e^{-\al r}H_-(R)dr\bigg]}  H_+(Z)H_-(Y) \widehat{\omega}_{in}(\alpha,z)dzdc\nonumber\\
&-\int_{\Gamma_\al}\frac{ 2\epsilon^{-2}}{\pi D(\al,c)}\left(\bb\right)\int_{-1}^1 {\bigg[\int_{-1}^ze^{-\al r}H_+(R)dr \bigg]}  H_-(Z)H_-(Y) \widehat{\omega}_{in}(\alpha,z)dzdc\nonumber\\
&-\int_{\Gamma_\al}\frac{ 2\epsilon^{-2}}{\pi D(\al,c)}\left(\cc\right)\int_{-1}^1{\bigg[\int_{z}^1  e^{\al r}H_-(R)dr}\bigg] H_+(Z)H_+(Y)\widehat{\omega}_{in}(\alpha,z) dzdc\nonumber\\
&-\int_{\Gamma_\al}\frac{ 2\epsilon^{-2}}{\pi D(\al,c)}\left(\cc\right)\int_{-1}^1 {\bigg[\int_{-1}^z e^{\al r} H_+(R)dr\bigg]} H_-(Z)H_+(Y) \widehat{\omega}_{in}(\alpha,z) dzdc\nonumber\\
&-\int_{\Gamma_\al}\frac{ 2\epsilon^{-2}}{\pi D(\al,c)}\left(\ddd\right)\int_{-1}^1 {\bigg[\int_{z}^1e^{-\al r}H_-(R)dr\bigg]}  H_+(Z)H_+(Y) \widehat{\omega}_{in}(\alpha,z) dzdc\nonumber\\
&-\int_{\Gamma_\al}\frac{ 2\epsilon^{-2}}{\pi D(\al,c)}\left(\ddd\right)\int_{-1}^1 {\bigg[\int_{-1}^ze^{-\al r}H_+(R)dr \bigg]}  H_-(Z)H_+(Y) \widehat{\omega}_{in}(\alpha,z) dzdc\nonumber\\
=:&\sum_{j=1}^8\int_{-1}^{1}K_j(y,z)  \widehat{\omega}_{in}(\alpha,z)dz.\label{K1-K8}
\end{align}

\section{Estimates on the Evans function}\label{sec:Evans}
\subsection{Evans function estimates} 
The main goal of this section is to prove the following lemma, which in term implies Lemma \ref{lem:ReadableEvans}.
\begin{lemma}\label{lem:Evans_function_est}
There exists a universal $\nu_0$ such that for $\nu \in (0,\nu_0]$, the Evans function $D(\alpha,c)$ is non-zero except in the region
\begin{align}
\set{z \in \Complex\big| \ z_i < -\alpha \nu - \delta \alpha^{-1/3} \nu^{1/3}},\label{Straight_contour}
\end{align} 
for $\delta<\delta_0$ (a sufficiently small universal constant). 

Moreover, the following bounds are satisfied.
\begin{itemize}
\item[(i)] For any $c\in\{c\in \mathbb{C}|c=c_r-i\al \nu-i\delta \al^{-1/3}\nu^{1/3},\quad \forall c_r\in(-\infty,\infty)\}$:
\begin{align}\label{Evans_function_lower_bound_1}
|D(\al, c)|\gtrsim \bigg|\int_{-1}^1 e^{-\al w}H_-(W)dw\bigg|\bigg|\int_{-1}^1 e^{\al w}H_+(W)dw\bigg|.
\end{align}
\noindent
Moreover, there holds 
\begin{align}
\left|\int_{-1}^1e^{\al w}H_+(W)dw\right|\gtrsim \frac{\ep e^\al}{\lan \al\ep\ran}\left|A_0\left(\frac{-1+ c_r}{\ep}+\delta i\right)\right|\label{H_+_int},\\
\left|\int_{-1}^1e^{-\al w}H_-(W)dw\right|\gtrsim \frac{\ep e^\al}{\lan\al\ep\ran}\left|A_0\left(\frac{-1-c_r}{\ep}+i\delta\right)\right|\label{H_-_int}. 
\end{align}

\item[(ii)] For $\al^2\nu\geq C_0$ and $c\in\{c\in \mathbb{C}|c=c_r-i(1-\kappa)\al \nu,\quad \forall c_r\in(-\infty,\infty)\}$:
\begin{align}\label{Evans_function_lower_bound_2}
|D(\al, c)|\gtrsim \bigg|\int_{-1}^1 e^{-\al w}H_-(W)dw\bigg|\bigg|\int_{-1}^1 e^{\al w}H_+(W)dw\bigg|.
\end{align}
\noindent
Moreover,  there holds  
\begin{align}
\left|\int_{-1}^1e^{\al w}H_+(W)dw\right|\gtrsim {\ep e^\al}\left|A_0\left(\frac{-1+ c_r}{\ep}+\delta i\right)\right|\label{H_+_int_high_mode},\\
\left|\int_{-1}^1e^{-\al w}H_-(W)dw\right|\gtrsim \ep e^\al\left|A_0\left(\frac{-1-c_r}{\ep}+i\delta\right)\right|\label{H_-_int_high_mode}. 
\end{align}
\end{itemize} 
\end{lemma}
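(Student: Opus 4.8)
The plan is to follow Romanov's argument \cite{Romanov73}, made quantitative as in \cite{ChenEtAl18}, reducing everything to the integrated Airy function $A_0$. Write $D(\al,c)=M_{11}M_{22}-M_{12}M_{21}$ with $M_{11}=\int_{-1}^1 e^{\al w}H_-(W)\,dw$, $M_{12}=\int_{-1}^1 e^{\al w}H_+(W)\,dw$, $M_{21}=\int_{-1}^1 e^{-\al w}H_-(W)\,dw$, $M_{22}=\int_{-1}^1 e^{-\al w}H_+(W)\,dw$. The lemma follows from two facts: (a) the sharp lower bounds on $M_{12}$ and $M_{21}$ stated in \eqref{H_+_int}--\eqref{H_-_int} (resp.\ \eqref{H_+_int_high_mode}--\eqref{H_-_int_high_mode}); and (b) the ``mismatched'' product is strictly subdominant, $|M_{11}M_{22}|\le\tfrac12|M_{12}M_{21}|$, once $\nu\le\nu_0$. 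Granting these, $|D|\ge|M_{12}M_{21}|-|M_{11}M_{22}|\ge\tfrac12|M_{12}M_{21}|$, which is \eqref{Evans_function_lower_bound_1}/\eqref{Evans_function_lower_bound_2}; and since $A_0$ is zero-free in $\{z_i\le\delta_0\}$ by Lemma \ref{lem_0}, while the Langer variables obey $W_i=(-c_i-\al\nu)/\ep\le\delta<\delta_0$ throughout the half-plane $\{c_i\ge-\al\nu-\delta\al^{-1/3}\nu^{1/3}\}$, fact (a) gives $M_{12}M_{21}\ne0$ there, hence $D\ne0$. Because both (a) and (b) are uniform in $c$ over this half-plane (not just on its boundary line), this yields the non-vanishing region \eqref{Straight_contour}. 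The readable version Lemma \ref{lem:ReadableEvans} is then obtained by inserting the three-regime asymptotics of $A_0$ from Lemma \ref{lem:IntAiry} into \eqref{H_+_int}--\eqref{H_-_int}.

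For (a) I would integrate by parts in $w$. An antiderivative of $H_-(W)$ (resp.\ $H_+(W)$) in $w$ is a fixed constant times $\ep A_0(W)$ (resp.\ $\ep\wt A_0(W)$), where $\wt A_0(z):=\overline{A_0(-\bar z)}$; here one uses $A_0'(z)=-e^{i\pi/6}H_-(z)$ together with the reflection identity $H_+(z)=\overline{H_-(-\bar z)}$. Integration by parts then writes $M_{21}$ (resp.\ $M_{12}$) as a boundary term at $w=-1$ (resp.\ $w=+1$), a boundary term at the other endpoint, and a remainder $\pm\al\int_{-1}^1 e^{\mp\al w}A_0(W)\,dw$ (resp.\ with $\wt A_0$). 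The dominant endpoint produces exactly $\ep e^\al A_0(\tfrac{-1-c_r}{\ep}+\delta i)$ for $M_{21}$ and, using $|\wt A_0(\tfrac{1-c_r}{\ep}+\delta i)|=|A_0(\tfrac{-1+c_r}{\ep}+\delta i)|$, the factor $\ep e^\al|A_0(\tfrac{-1+c_r}{\ep}+\delta i)|$ for $M_{12}$. One must then bound the opposite endpoint (smaller by the exponentially favorable weight $e^{\mp\al}$) and the remainder integral: along the line $W_i=\delta$, $|A_0(W)|$ is, up to a polynomial factor, a monotone exponential of $W_r$ by Lemma \ref{lem:IntAiry}, so the remainder is controlled by the dominant term at the cost of the $\lan\al\ep\ran$ loss recorded in (i); in the high-frequency case (ii), working on the line $W_i=-\kappa\al^{4/3}\nu^{2/3}<0$ (still inside the good region of Lemma \ref{lem:IntAiry}) supplies extra exponential decay that absorbs the loss, giving \eqref{H_+_int_high_mode}--\eqref{H_-_int_high_mode}.

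For (b) I would estimate $M_{11}$ and $M_{22}$ directly from the Airy asymptotics \eqref{estimate_of_H_pm} (resp.\ \eqref{estimate_of_H_pm_high_mode}): in $M_{11}=\int e^{\al w}H_-(W)\,dw$ the weight $e^{\al w}$ peaks at $w=1$ precisely where $H_-(W)$ (which decays for $W_r>0$) is smallest, and symmetrically for $M_{22}$. Splitting the $w$-integral at $w=c_r$ and applying the monotonicity/power-gain estimates of Lemma \ref{Gain_in_poly_power}, one finds $|M_{11}|,|M_{22}|$ lose either a factor $e^{-2\al}$ relative to $|M_{12}|,|M_{21}|$ when $c_r$ lies in the bulk of $[-1,1]$, or the full Airy decay across the interval when $c_r$ is near or outside the endpoints --- in either case enough to secure $|M_{11}M_{22}|\le\tfrac12|M_{12}M_{21}|$ after choosing $\nu_0$ (equivalently $\ep$) small.

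The main obstacle is step (a): the integration by parts must yield a \emph{lower} bound on $|M_{12}|$ and $|M_{21}|$, so one has to rule out cancellation between the dominant boundary term and the remainder integral $\al\int e^{\mp\al w}A_0(W)\,dw$. This requires the precise phase information in the Airy asymptotics --- monotonicity of $Re(W^{3/2}e^{i\pi/4})$ and of $\mathrm{ph}(W)$ along the contour, handled with care at the branch cut \eqref{eq:Branch} --- which is exactly what dictates the split into cases (i) and (ii) and the appearance of the $\lan\al\ep\ran$ factor; keeping the constants in Lemmas \ref{lem:IntAiry} and \ref{Gain_in_poly_power} explicit enough to actually realize the constant $\tfrac12$ in step (b) is the remaining bookkeeping point.
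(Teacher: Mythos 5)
Your overall architecture coincides with the paper's (and Romanov's): write $D$ as the $2\times2$ determinant, prove lower bounds on the anti-diagonal entries by integrating by parts against $\ep A_0$, and show the diagonal product is subdominant, all uniformly in the half-plane $\{c_i\ge-\al\nu-\delta\al^{-1/3}\nu^{1/3}\}$. Your identities ($A_0'(z)=-e^{i\pi/6}H_-(z)$, the reflection $H_+(z)=\overline{H_-(-\bar z)}$) are correct, and your step (b) is a plausible, if more computational, substitute for the paper's column-wise comparisons.

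The genuine gap is exactly at the obstacle you flag in step (a), and the tools you propose do not overcome it. After integration by parts and normalization, the remainder is
\begin{align}
\al\ep\int_0^{2/\ep}e^{-\al\ep t}\,\Big|\frac{A_0(C_j^\star+t)}{A_0(C_j^\star)}\Big|\,dt ,
\end{align}
and when $\al\ep\gtrsim 1$ this is of the \emph{same order} as the dominant boundary term (this is precisely the origin of the $\lan\al\ep\ran^{-1}$ factor in \eqref{H_+_int}--\eqref{H_-_int}). Controlling the ratio $|A_0(C_j^\star+t)/A_0(C_j^\star)|$ by combining the asymptotic lower bounds of Lemma \ref{lem:IntAiry} with matching upper bounds and phase monotonicity (Lemma \ref{Gain_in_poly_power}) only gives a bound of the form $Ce^{-at}$ with an unspecified constant $C\ge 1$ (the asymptotics fail near the turning point and carry implicit constants), and then $\al\ep\int_0^\infty Ce^{-(\al\ep+a)t}dt=C\al\ep/(\al\ep+a)$ can exceed $1$, so the triangle inequality yields no lower bound at all in the high-frequency regime. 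What closes this in the paper is Romanov's global, constant-free estimate on the logarithmic derivative: $\mathrm{Re}\,\tfrac{A_0'}{A_0}\le -a<0$ on all of $\{z_i\le\delta\}$ (Lemma \ref{lem_0}, quantity \eqref{defn_of_a}), giving $|A_0(C_j^\star+t)/A_0(C_j^\star)|\le e^{-at}$ with constant exactly $1$; the remainder is then $\le \al\ep/(\al\ep+a)<1$ and the surviving margin $a/(\al\ep+a)\approx\lan\al\ep\ran^{-1}$ is the stated factor. The same ingredient, in its quantitative high-mode form \eqref{key_high_mode} giving $a\gtrsim\kappa^{1/2}\al\ep$, is what removes the $\lan\al\ep\ran$ loss in case (ii) — not merely the negative imaginary part of the Langer variable per se. The paper also runs the subdominance step through this same $u$-function machinery, reducing both (a) and (b) to an explicit scalar inequality involving $F(\al,a/\ep)$ checked separately for $\al\ep\le Ka$ and $\al\ep\ge Ka$ (cf.\ \eqref{proof_of_lem_1_1} and the discussion following it), which is considerably cleaner than tracking constants across all regimes of $c_r$. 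Without the logarithmic-derivative bound on $A_0$ (or an equivalent cancellation-free representation), your proposal as written cannot establish \eqref{H_+_int}, \eqref{H_-_int}, or \eqref{H_+_int_high_mode}--\eqref{H_-_int_high_mode} for $\al\ep\gtrsim1$, and hence neither the Evans function lower bounds nor the non-vanishing region.
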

\begin{proof}
Consider case (i) first. 
First, define the variables $d,C_j^\star$
\begin{subequations}
\begin{align}\label{defn_C_j_star}
d=&-ic+\al\nu, \\ 
C_1^\star& :=\frac{-1-id}{\ep}=\frac{-1-c_r}{\ep}+\delta i,\\
C_2^\star& :=\frac{-1-i\overline{d}}{\ep}=\frac{-1+c_r}{\ep}+\delta i.
\end{align}
\end{subequations}
By definition \eqref{Langer_variable} for the spectral parameter $c$ on the vertical line $c_i\equiv -\al\nu-\delta\ep$ we get
\begin{align}
Z:=\frac{z-id}{\ep}=\frac{z-c_r}{\ep}+\delta i.\label{Langer_variable_straight_line}
\end{align}
Define also the function $u(z,t)$ 
\begin{equation}\label{defn_u}
u(z,t)=\frac{A_0(z+t)}{A_0(z)}=\exp\bigg(\int_0^t \frac{A_0'(z+s)}{A_0(z+s)}ds\bigg).
\end{equation} 
\noindent
\textbf{Step 1: Rephrasing the lower bound \eqref{Evans_function_lower_bound_1}.}
First, we show that the lower bound of the Evans function $|D(\al,c)|$  \eqref{Evans_function_lower_bound_1} is follows from the follwing: 
\begin{align}
e^{2\al}&\frac{1}{1+q}\left|1-e^{-2\al}u\left(C_j^\star,\frac{2}{\ep}\right)\right|\geq \frac{1}{1+q}\left|1-e^{2\al}u\left(C_j^\star,\frac{2}{\ep}\right)\right|\nonumber\\
&+\al\ep\int_0^{2/\ep}(e^{2\al-\al\ep t}+e^{\al\ep t})\left|u\left(C_j^\star,t\right)\right|dt+\frac{q}{1+q}(1+e^{2\al-2a/\ep})\label{proof_of_lem_1_1}
\end{align}
for some fixed universal constant $q>0$.
To this end, we first note that the lower bound \eqref{Evans_function_lower_bound_1} is implied by the relations:
\begin{align}
\frac{1}{1+q}\bigg|\int_{-1}^1 e^{-\al w}Ai(e^{i\pi/6}\frac{w-id}{\ep})dw\bigg|\geq
\bigg|\int_{-1}^1 e^{\al w}Ai(e^{i\pi/6}\frac{w-id}{\ep})dw\bigg|;\label{3.16}\\
\frac{1}{1+q}\bigg|\int_{-1}^1 e^{\al w}Ai(e^{i5\pi/6}\frac{w-id}{\ep})dw\bigg|\geq
\bigg|\int_{-1}^1 e^{-\al w}Ai(e^{i5\pi/6}\frac{w-id}{\ep})dw\bigg|.\label{3.17}
\end{align}
Secondly, by substitution  ($x=-1+\ep t$ in \eqref{3.16} and $x=1-\ep t$ in \eqref{3.17}) and the fact that $A_i(w)=\overline{Ai(\overline{w})}$, \eqref{3.16} and \eqref{3.17} are equivalent to
\begin{align}
\frac{1}{1+q}e^{2\al}\bigg |\int_0^{2/\ep}e^{-\al\ep t} Ai\left(e^{i\pi/6}\left(\frac{-1-id}{\ep}+t\right)\right)dt\bigg|\geq \bigg |\int_0^{2/\ep}e^{\al\ep t} Ai\left(e^{i\pi/6}\left(\frac{-1-id}{\ep}+t\right)\right)dt\bigg|\\
\frac{1}{1+q}e^{2\al}\bigg |\int_0^{2/\ep}e^{-\al\ep t} Ai\left(e^{i\pi/6}\left(\frac{-1-i\overline{d}}{\ep}+t\right)\right)dt\bigg|\geq \bigg |\int_0^{2/\ep}e^{\al\ep t} Ai\left(e^{i\pi/6}\left(\frac{-1-i\overline{d}}{\ep}+t\right)\right)dt\bigg|, 
\end{align}
which in turn hold provided the following is satisfied:
\begin{equation}
e^{2\al}\frac{1}{1+q}\bigg |\int_0^{2/\ep}e^{-\al\ep t} Ai(e^{i\pi/6}(C_j^\star+t))dt\bigg|\geq \bigg |\int_0^{2/\ep}e^{\al\ep t} Ai(e^{i\pi/6}(C_j^\star+t))dt\bigg|,\quad j=1,2, \label{Proof_of_lemma_1_2}
\end{equation}
where the quantities $C_j^\star$ \eqref{defn_C_j_star} take values in the domain
\begin{align}\label{Domain_Z_star}
Re(C_j^\star)\leq& \max\left\{\frac{-1- c_r}{\ep},\frac{-1+ c_r}{\ep}\right\},\quad Im(C_j^\star)=\frac{- c_i-\al \nu}{\ep}= \delta,\quad{\al\geq 0}.
\end{align}
According to Lemma \ref{lem_0}, $|A_0(C_j^\star)|$ is non-zero in this domain. Integrating both integrals in the inequality \eqref{Proof_of_lemma_1_2} by parts and then dividing by $|A_0(C_j^\star)|$ yield that
\begin{align}
&\frac{e^{2\al}}{1+q}\left|\int_0^{2/\ep}(\al \ep)e^{-\al\ep t} \frac{A_0(C_j^\star+t)}{A_0(C_j^\star)}dt+\frac{e^{-2\al}A_0(C_j^\star+\frac{2}{\ep})}{A_0(C_j^\star)}-1\right|\nonumber\\
&\geq \left|-\int_0^{2/\ep} \al \ep e^{\al \ep t} \frac{A_0(C_j^\star+t)}{A_0(C_j^\star)}dt+\frac{e^{2\al}A_0(C_j^\star+\frac{2}{\ep})}{A_0(C_j^\star)}-1\right|,\quad j=1,2.
\end{align}
Combining it with the definition of $u$ \eqref{defn_u}, we obtain the result \eqref{proof_of_lem_1_1}.

\textbf{Step 2: Proof of the inequality  \eqref{proof_of_lem_1_1}.} Recalling from \cite{Romanov73}, we obtain
\begin{equation}
|u(C_j^\star,t)|=\bigg|\exp\bigg(\int_0^t Re\frac{A_0'(C_j^\star+s)}{A_0(C_j^\star+s)}ds\bigg)\bigg|\leq e^{-at}, \quad\forall t\in\left[0,\frac{2}{\ep}\right],\quad j=1,2,\label{ubound}
\end{equation}
where $a$ is defined in \eqref{defn_of_a}. 
Next, we square both sides of \eqref{proof_of_lem_1_1} (note both sides are positive), use $u(C_j^\star,2\ep^{-1})=-e^{-2a/\ep}$ and the upper bound \eqref{ubound}, to obtain that the following implies \eqref{proof_of_lem_1_1},  
\begin{equation}
e^{2\al}(1+e^{-2\al-2a/\ep})\geq 1+e^{2\al-2a/\ep}+(1+q)\al\ep\int_0^{2/\ep}(e^{2\al-\al\ep t}+e^{\al\ep t})e^{-at}dt+q(1+e^{2\al-2a/\ep}).
\end{equation}
Now substituting $t=(1+s)/\ep$ and dividing both sides by $\al e^{\al-a/\ep}$ yields the equivalent inequality 
\begin{equation}\label{proof_of_lem_1_3}
e^{a/\ep}\frac{a}{\ep}\int_0^1\frac{\sinh \al s}{\sinh\al}\sinh\frac{as}{\ep}ds\geq 1+\frac{q}{4(1+q)}\left(\frac{e^\al }{\sinh \al}+\frac{e^{2a/\ep}}{e^\al\sinh\al}\right). 
\end{equation}
The left hand side is calculated through integration by parts as
\begin{align}
e^{a/\ep}\frac{a}{\ep}\int_0^1\frac{\sinh \al s}{\sinh\al}\sinh\frac{as}{\ep}ds=
\frac{e^{2a/\ep}+1}{2}\left(\frac{1-\frac{\al\epsilon}{a} \frac{\tanh(a/\epsilon)}{\tanh(\alpha)}}{1-(\al\ep/a)^2}\right) =:F\left(\al,\frac{a}{\ep}\right).
\end{align}
As a result, the following lower bound of the function $F$ yields the inequality \eqref{proof_of_lem_1_1}
\begin{equation}\label{goal}
F\left(\al,\frac{a}{\ep}\right)>1+\frac{q}{4(1+q)}\left(\frac{e^\al}{\sinh\al}+\frac{e^{2a/\ep}}{e^\al\sinh\al}\right).
\end{equation}
The remaining part is devoted to proving this lower bound. Recall some properties of the function $F$ from \cite{Romanov73}.
The function $F(\al,\beta)$ is decreasing in terms of $\al$ and is increasing in terms of $\beta$. 
\begin{figure}
\centering
\includegraphics[width=6cm]{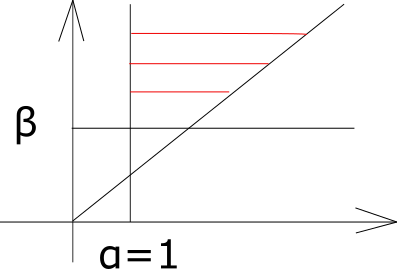}
  \caption{domain of interest}\label{region_D}
\end{figure}
Next we distinguish between two regimes: 
 $K$ sufficiently large such that $Ka\geq 100$ where $a$ is defined in \eqref{defn_of_a}, we define 
\begin{align}\label{low_mode_high_mode}
1)\text{ Low modes: } \al\ep\leq Ka;\\
2)\text{ High modes: } \al\ep\geq Ka. 
\end{align}
In the low mode case 1), to derive the lower bound, we consider the minimum of the function $F(\al,\beta)$ in the domain (here $B_K$ is a constant chosen sufficiently large relative to $K$)
\begin{align*}
D:=\{(\al,\beta)|\beta\geq B_K, K\beta\geq \al,
\al\geq 1\}.
\end{align*}
A figure of the region $D$ can be found in Figure \ref{region_D}.  Due to monotonicity, the minimum of $F(\al,\beta)$ is achieved on the half-line $\al=K\beta, \beta\geq B_K$. On this line segment, explicit calculation yields
\begin{equation}\label{goal_0}
\min_{(\al,\beta)\in D}F(\al,\beta)\geq \min_{\beta\geq B_K}F(K\beta,\beta)=\min_{\beta\geq B_K}\frac{e^{2\beta}+1}{2}\frac{1-K\frac{\tanh(\beta)}{\tanh(K\beta)}}{1-K^2}\geq \min_{\beta\geq B_K}{ \frac{e^{2\beta}}{2(K+1)}-\frac{1}{2(K-1)}}.
\end{equation}
Choosing $q$ sufficiently small relative to $K^{-1}$ and $\nu$ sufficiently small relative to $a$ and $K^{-1}$, \eqref{goal_0} yields \eqref{goal}.  

\noindent
For the high mode case 2),  
we estimate the function $F$ as follows:
\begin{align}
F(\al,a/\ep)=\frac{e^{2a/\ep}+1}{2}\frac{\frac{\al\epsilon}{a}\frac{\tanh(a/\epsilon)}{\tanh(\alpha)}-1}{(\al\ep/a)^2-1}\geq \frac{e^{2a\al^{1/3}\nu^{-1/3}}+1}{2}\frac{K\frac{\tanh(a\al^{1/3}\nu^{-1/3})}{\tanh\alpha}-1}{\al^{4/3}\nu^{2/3}/a^2-1}\geq\frac{K/2}{\al^{4/3}}\frac{e^{\al^{1/3}}+1}{2}, 
\end{align}
which implies \eqref{goal}. 

\noindent
\textbf{Step 3: Proof of inequalities \eqref{H_+_int} and \eqref{H_-_int}.}
Recall \eqref{defn_C_j_star}. By arguments similar to that used to prove \eqref{Evans_function_lower_bound_1}, a suitable lower bound $b$ as follows yields \eqref{H_+_int} and \eqref{H_-_int}
\begin{equation}\label{proof_of_lem_1_4}
e^{\al}\ep\left|\int_0^{2/\ep} e^{-\al \ep t }H_-(C_j^\star+t)dt\right|\geq b>0.
\end{equation}
Recalling the definition of $A_0$\eqref{A_0} and $u$ \eqref{defn_u},  then an integration by parts yields that the following implies \eqref{proof_of_lem_1_4}:
\begin{align}\label{proof_of_lem_1_5}
e^{\al}\ep\left|u\bigg(C_j^\star,\frac{2}{\ep}\bigg)e^{-2\al}-1\right|\geq \ep e^\al\left|\int_0^{2/\ep}e^{-\al\ep t}\al\ep\left|u(C_j^\star, t)\right|dt\right|+\frac{b}{|A_0(C_j^\star)|}.
\end{align}
Using the upper bound \eqref{ubound} and $u(C_j^\star,2\ep^{-1})=-e^{-2a/\ep}$, we see that \eqref{H_+_int} and \eqref{H_-_int}  hold if the following is satisfied:
\begin{align}\label{proof_of_lem_1_6}
e^\al(1-e^{-2\al-2a/\ep})\geq e^\al\int_0^{2/\ep} e^{-\al \ep t}\al\ep e^{-at}dt+\frac{b}{|A_0(C_j^\star)|\ep}.
\end{align}
A calculation shows that \eqref{proof_of_lem_1_6} holds if
\begin{equation}\label{proof_of_lem_1_7}
\bigg(\frac{a}{\al\ep +a}\bigg)(e^{\al}-e^{-\al-2a/\ep})\geq \frac{b}{|A_0(C_j^\star)|\ep}.
\end{equation}
To prove the inequality  \eqref{proof_of_lem_1_7}, we distinguish between the high modes and low modes \eqref{low_mode_high_mode} again. If $\al$ is small, i.e., $\al\ep\leq Ka$, the inequality \eqref{proof_of_lem_1_7} is satisfied if
$b$ is small 
\begin{align}
b\leq \frac{e^\al |A_0(C_j^\star)|\ep}{(K+1)2}.
\end{align}
{For the high modes $\al\ep\geq Ka\geq 100$, the inequality \eqref{proof_of_lem_1_7} holds if 
\begin{align*}
b\leq \frac{e^\al a|A_0(C_j^\star)|\ep}{2(\al\ep+a)}\approx\frac{e^\al|A_0(C_j^\star)|\epsilon}{\al\ep}.
\end{align*}}
Combining the estimates in different regimes, we obtain that \eqref{H_+_int} and \eqref{H_-_int} hold as long as the lower bound $b$ in \eqref{proof_of_lem_1_4} is smaller than
\begin{align*}
b\lesssim \frac{e^\al|A_0(C_j^\star)|\epsilon}{\lan\al\ep\ran}.
\end{align*}
Combining it with the asymptotic expansion of $A_0$ in Lemma \ref{lem:IntAiry} yields the estimate \eqref{D_al_c_lower_bound}. This completes the proof part (i) of the lemma.

\noindent
\textbf{Step 4: Part (ii) -- Proof of the inequalities \eqref{Evans_function_lower_bound_2}, \eqref{H_+_int_high_mode} and \eqref{H_-_int_high_mode} in the high mode case.}
Let us comment on the proof of \eqref{Evans_function_lower_bound_2}, \eqref{H_+_int_high_mode}, and \eqref{H_-_int_high_mode}.
Here we use the observation [(3.5), \cite{Romanov73}]: for sufficiently large $R>0$, and for all $z$ in $G_R:=\{z||z|\geq R,-13\pi/12\leq \mathrm{ph} z\leq \pi/12\}$, the following inequality holds for some universal constant $B > 0$, 
\begin{align}\label{key_high_mode}
Re\frac{A_0'(z)}{A_0(z)}\leq -R^{1/2}\cos\frac{7\pi}{24}+\frac{B}{R}. 
\end{align}
Following the estimate \eqref{key_high_mode}, we have that the Langer variables in this case satisfy
\begin{align*}
C_j^\star=\frac{-1\pm c_r}{\ep}-\kappa\frac{\al\nu}{\ep}i,\quad |C_j^\star|\gtrsim \al^{4/3}\nu^{2/3}.
\end{align*}
Now from \eqref{key_high_mode} and the definition of $a$ in \eqref{defn_of_a}
gives $a\gtrsim  \kappa^{1/2}\al^{2/3}\nu^{1/3}=\kappa^{1/2}\al\ep \gtrsim 1$.  
Therefore, by choosing $K$ sufficiently large we have $\alpha \eps \leq K(\kappa) a$.
This reduces to case (1) in the previous steps. The inequalities \eqref{Evans_function_lower_bound_2}, \eqref{H_+_int_high_mode} \eqref{H_-_int_high_mode} with implicit constants depending on $\kappa$ follow by the same arguments as above. Note that $\lan \al\ep\ran $ will not appear. 
\end{proof}

\subsection{Evans function estimate in the connection region}
Recall the contours $\Gamma_t^{\pm}$, $\Gamma_E$, and $\Gamma_j^{\pm}$. On the contour, it is clear that we may write $c_i$ as a function of $c_r$; denote this function $c_i = \Gamma(c_r)$. 
\begin{lemma}[Connection region Evans function] 
There exists a universal $\nu_0$ such that for $\nu \in (0,\nu_0]$, the Evans function $D(\alpha,c)$ is non-zero except in the region
\begin{align}
\set{z \in \Complex\big| \Gamma(z_r) >  z_i }. 
\end{align} 
Moreover, on $\Gamma_t^{\pm}, \Gamma_j^{\pm}$ the lower bounds \eqref{Evans_function_lower_bound_1}, \eqref{H_+_int}, and \eqref{H_-_int} all hold. 
\end{lemma}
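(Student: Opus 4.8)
The plan is to follow the proof of Lemma~\ref{lem:Evans_function_est} almost verbatim, the only genuinely new feature being that the contour now carries a small nonzero slope. Parametrize $\Gamma_E\cup\Gamma_1^{\pm}\cup\Gamma_2^{\pm}\cup\Gamma_t^{\pm}$ by $c_r\mapsto c_i=\Gamma(c_r)$. Along it the integrated-Airy arguments are $C_j^\star$ as in \eqref{defn_C_j_star}, now with imaginary part $\delta+\tan\theta\,(|c_r|-200)_+/\ep$ rather than $\delta$, and the lower bounds \eqref{Evans_function_lower_bound_1}, \eqref{H_+_int}, \eqref{H_-_int} are to be read with this $C_j^\star$. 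On the flat pieces $\Gamma_E,\Gamma_1^{\pm}$, where $c_i\equiv-\al\nu-\delta\ep$ and hence $\mathrm{Im}(C_j^\star)=\delta$, the two assertions are exactly Lemma~\ref{lem:Evans_function_est}(i) (and part~(ii) for high modes), so it remains to treat $\Gamma_2^{\pm}$ and $\Gamma_t^{\pm}$, together with the thin strip $\{c:\Gamma(c_r)\le c_i<-\al\nu-\delta\ep,\ |c_r|>200\}$ entering the non-vanishing claim; the strip is handled identically.

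First I would record the key geometric fact. Because $|c_r|\ge200$ on $\Gamma_2^{\pm}\cup\Gamma_t^{\pm}$, one has $|\mathrm{Re}(C_j^\star)|\ge(|c_r|-1)/\ep\gtrsim\ep^{-1}$; moreover under the translations $C_j^\star\mapsto C_j^\star+s$, $s\in[0,2/\ep]$, arising in the integrated-Airy representation the real part only moves further from $0$, while the imaginary part is unchanged and satisfies $\mathrm{Im}(C_j^\star)/|\mathrm{Re}(C_j^\star+s)|\le\tan\theta+O(\ep)<\tfrac{1}{100}\ll\tan\tfrac{\pi}{12}$. Hence, for $\nu\le\nu_0$, every $C_j^\star+s$ lies in Romanov's region $G_R=\{|z|\ge R,\ -\tfrac{13\pi}{12}\le\ph z\le\tfrac{\pi}{12}\}$ with $R\approx\ep^{-1}$: the two sign choices in \eqref{defn_C_j_star} place $\ph(C_j^\star+s)$ within $\tfrac{\pi}{12}$ of $0$ and of $\pi$ respectively, in particular well away from the wedge around $\ph z=\tfrac{5\pi}{6}$ where $A_0$ has zeros. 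Therefore the estimate \eqref{key_high_mode} applies along the whole contour: $A_0$ is zero-free there, and by \eqref{defn_u} we get $|u(C_j^\star,t)|\le e^{-at}$ as in \eqref{ubound}, now with $a\gtrsim R^{1/2}\gtrsim\ep^{-1/2}\to\infty$. This is precisely the favorable situation of Step~4 (the high-mode step) in the proof of Lemma~\ref{lem:Evans_function_est}.

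With $A_0(C_j^\star)\neq0$ and the bound on $u$ in hand, Steps~1--3 of the proof of Lemma~\ref{lem:Evans_function_est} go through unchanged: \eqref{Evans_function_lower_bound_1} reduces to the scalar inequality \eqref{proof_of_lem_1_1}, and thence to the lower bound \eqref{goal} on Romanov's function $F(\al,a/\ep)$, while \eqref{H_+_int} and \eqref{H_-_int} reduce to \eqref{proof_of_lem_1_7}. Since $a\gtrsim\ep^{-1/2}$ one has $\al\ep/a\lesssim(\al\nu)^{1/2}$, which is bounded for low modes ($\al^2\nu<C_0$ forces $\al\nu\le C_0$) and, for high modes, becomes bounded after shifting the contour to $c_i=-(1-\kappa)\al\nu$ exactly as in Lemma~\ref{lem:Evans_function_est}(ii); in every case we are in the branch $\al\ep\le Ka$ with $\beta=a/\ep\gtrsim\ep^{-3/2}\ge B_K$, so \eqref{goal} follows from the monotonicity of $F$ and the explicit estimate \eqref{goal_0} upon taking $q$ and $\nu_0$ small. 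The non-vanishing statement is then immediate: on $\{c:c_i\ge\Gamma(c_r)\}$ the right-hand side of \eqref{Evans_function_lower_bound_1} is bounded below by a constant times $\ep^2e^{2\al}\langle\al\ep\rangle^{-2}|A_0(C_1^\star)||A_0(C_2^\star)|>0$, the part $\{c_i\ge-\al\nu-\delta\ep\}$ being covered already by Lemma~\ref{lem:Evans_function_est} and the remaining strip by the argument above.

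The main obstacle is the geometric verification of the second paragraph: one must check that $C_j^\star$ \emph{and} all of its translates $C_j^\star+s$, $s\in[0,2/\ep]$, remain clear of the $\ph z\approx\tfrac{5\pi}{6}$ wedge; this is exactly what forces the quantitative restrictions $|\tan\theta|<\tfrac{1}{100}$ and $|c_r|\ge200$ in the definitions of $\Gamma_2^{\pm}$ and $\Gamma_t^{\pm}$, and it must also be checked across the corner $c_r=\pm200$, where $\mathrm{Im}(C_j^\star)$ interpolates between $\delta$ and the tilted value (which only helps). Everything else---the branch-cut conventions \eqref{eq:Branch}, the passage from the $H_\pm$ integrals to $A_0$, and the low/high-mode split---is a routine repetition of the straight-contour arguments already carried out above.
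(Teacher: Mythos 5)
Your core step is the same as the paper's: the only genuinely new ingredient for the tilted pieces is to check that $C_j^\star+s$, $s\in[0,2/\ep]$, stays inside Romanov's sector $G_R$ (this is exactly what the smallness of $\tan\theta$ and the restriction $|c_r|\geq 200$ buy), so that \eqref{key_high_mode} yields a lower bound on $a$ as in \eqref{a_co2}, after which Steps 1--3 of the proof of Lemma \ref{lem:Evans_function_est} are rerun verbatim. That part of your argument is correct (one small slip: the translates do \emph{not} always move the real part away from $0$ --- e.g.\ $C_1^\star$ on $\Gamma_2^+$ has real part $\leq -201/\ep$ and moves \emph{toward} $0$ under $+s$ --- but since $s\leq 2/\ep$ the modulus stays $\gtrsim \ep^{-1}$ and the phase stays within the allowed wedge, so the conclusion is unaffected).

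The genuine gap is in your final reduction. You claim that ``in every case we are in the branch $\al\ep\leq Ka$'', justifying the high-frequency case by shifting the contour to $c_i=-(1-\kappa)\al\nu$ as in Lemma \ref{lem:Evans_function_est}(ii). First, the quantitative claim fails on the stated contours: with $a\gtrsim \ep^{-1/2}$ one only gets $\al\ep/a\lesssim (\al\nu)^{1/2}$, which is unbounded as $\al\to\infty$ for fixed $\nu$ (e.g.\ $\al\sim\nu^{-2}$), so for $\al\nu$ large you are genuinely in the regime $\al\ep\geq Ka$. Second, the contour shift does not rescue this: it would prove bounds of the type \eqref{Evans_function_lower_bound_2}, \eqref{H_+_int_high_mode}, \eqref{H_-_int_high_mode} on the line $c_i=-(1-\kappa)\al\nu$, whereas the lemma asserts \eqref{Evans_function_lower_bound_1}, \eqref{H_+_int}, \eqref{H_-_int} precisely on $\Gamma_j^{\pm}$ and $\Gamma_t^{\pm}$ (height $-\al\nu-\delta\ep$ plus the tilt), and these contours are used for \emph{all} frequencies in the $t\geq 1$ analysis of Sections \ref{sec:Gen1}--\ref{sec:Gen2}. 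The repair is simple and is what the paper's ``the rest of the argument follows similarly'' implicitly includes: keep both branches of Steps 2--3, i.e.\ also run the $\al\ep\geq Ka$ computations leading to \eqref{goal} and \eqref{proof_of_lem_1_7}; these only use a positive lower bound on $a$ (and are in fact easier with $a\gtrsim\ep^{-1/2}$), and they are the source of the $\lan\al\ep\ran^{-1}$ factors already present in \eqref{H_+_int}--\eqref{H_-_int}. As written, however, your proof does not cover the high-frequency range on the contours named in the statement.
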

\begin{proof} 
The proof is similar to Lemma \ref{lem:Evans_function_est} but some changes are required because $c_i$ is no longer constant and the $ C_j^\star$'s, defined as 
\begin{align}
C_1^\star=\frac{-1-id}{\ep}=\frac{-1- c_r}{\ep}+\frac{-c_i-\al\nu}{\epsilon} i,\\
C_2^\star=\frac{-1-i\overline{d}}{\ep}=\frac{-1+ c_r }{\ep}+\frac{-c_i-\al\nu}{\ep} i,
\end{align}
are no longer in the region specified in Lemma \ref{lem_0}.
As above, one must bound $a$ from below on the contour: 
\begin{equation}\label{a_co2}
-\max_{c \in \Gamma_2^{\pm}\cup \Gamma_t^\pm} Re\frac{A_0'(C_j^\star+s)}{A_0(C_j^\star+s)}>0,\quad \forall s\in [0,{2}{\ep}^{-1}]. 
\end{equation}
For this, we use again \eqref{key_high_mode}. Since the angle between the region $\Gamma_2^+\cup \Gamma_t^+$ ($\Gamma_2^-\cup \Gamma_t^-$) and the positive imaginary (negative) axis are small, the argument $C_j^\star+s$ in the equation \eqref{a_co2} is inside the domain $G_R$. This completes the proof of \eqref{a_co2}; the rest of the argument follows similarly to Lemma \ref{lem:Evans_function_est}. 
\end{proof} 

\section*{Acknowledgments}
The authors would like to thank Anna Mazzucato and Vlad Vicol for helpful discussions, especially regarding Corollary 1.8.

\bibliographystyle{abbrv}
\bibliography{bibliography}

\end{document}